\newtheorem{theorem}{Theorem}[section]
\newtheorem{lemma}[theorem]{Lemma}
\newtheorem{prop}[theorem]{Proposition}
\newcommand{\ds}{\displaystyle}
\newcommand{\p}{\partial}
\newcommand{\eqnref}[1]{(\ref {#1})}
\newcommand{\Rbb}{\mathbb{R}}
\newcommand{\la}{\langle}
\newcommand{\ra}{\rangle}
\newcommand{\Fcal}{\mathcal{F}}
\newcommand{\Hcal}{\mathcal{H}}
\newcommand{\Lcal}{\mathcal{L}}
\newcommand{\Kcal}{\mathcal{K}}
\newcommand{\Ga}{\alpha}
\newcommand{\Gd}{\delta}
\newcommand{\Ge}{\epsilon}
\newcommand{\Gf}{\phi}
\newcommand{\Gvf}{\varphi}
\newcommand{\Gg}{\gamma}
\newcommand{\Gc}{\chi}
\newcommand{\Gl}{\lambda}
\newcommand{\Gn}{\eta}
\newcommand{\Gt}{\theta}
\newcommand{\Gr}{\rho}
\newcommand{\Gs}{\sigma}
\newcommand{\Go}{\omega}
\newcommand{\Gz}{\zeta}
\newcommand{\GF}{\Phi}
\newcommand{\GG}{\Gamma}
\newcommand{\GP}{\Pi}
\newcommand{\GS}{\Sigma}
\newcommand{\GO}{\Omega}
\newcommand{\beq}{\begin{equation}}
\newcommand{\eeq}{\end{equation}}
\def\ol{\overline}
\numberwithin{equation}{section}
\numberwithin{figure}{section}
\begin{document}

\title{Spectral structure of the Neumann--Poincar\'e operator on thin ellipsoids and flat domains\thanks{\footnotesize This work was supported by NRF (of S. Korea) grants 2019R1A2B5B01069967, 2021R1A2B5B02-001786 and by JSPS of Japan KAKENHI grants JP19K14533, 20K03655 and 21K13805.}}

\author{Kazunori Ando\thanks{Department of Electrical and Electronic Engineering and Computer Science, Ehime University, Ehime 790-8577, Japan. Email: {\tt ando@cs.ehime-u.ac.jp}.}
\and Hyeonbae Kang\thanks{Department of Mathematics and Institute of Applied Mathematics, Inha University, Incheon 22212, S. Korea. Email: {\tt hbkang@inha.ac.kr}.}
\and Sanghyuk Lee\thanks{Department of Mathematical Sciences, Seoul National University, Seoul 08826, S. Korea. Email: {\tt shklee@snu.ac.kr} }
\and Yoshihisa Miyanishi\thanks{Department of Mathematical Sciences, Faculty of Science, Shinshu University, A519, Asahi 3-1-1, Matsumoto 390-8621, Japan. Email: {\tt miyanishi@shinshu-u.ac.jp}}.}
\date{}
\maketitle

\begin{abstract}
We investigate the spectral structure of the Neumann--Poincar\'e operator on thin ellipsoids.
Two types of thin ellipsoids are considered:
long prolate ellipsoids and flat oblate ellipsoids.
We show that the totality of eigenvalues of the Neumann--Poincar\'e operators on a sequence of the prolate spheroids is densely distributed in the interval $[0,1/2]$ as their eccentricities tend to $1$, namely, as they become longer. We then
prove that eigenvalues of the Neumann--Poincar\'e operators on the oblate ellipsoids are densely distributed in the interval $[-1/2,1/2]$ as the ellipsoids become flatter. In particular, this shows that even if there are at most finitely many negative eigenvalues on the oblate ellipsoids, more and more negative eigenvalues appear as the ellipsoids become flatter.
We also show a similar spectral property for flat three dimensional domains.
\end{abstract}

\noindent{\footnotesize {\bf AMS subject classifications}. 35J05 (primary), 35P05 (secondary)}

\noindent{\footnotesize {\bf Key words}. Neumann--Poincar\'e operator, spectrum, prolate spheroids, oblate ellipsoids, negative eigenvalue}

\section{Introduction}

For a bounded domain $\GO$ with the Lipschitz continuous boundary in $\Rbb^d$, $d=2,3$, the Neumann--Poincar\'e (abbreviated by NP) operator associated with $\p\GO$ is the boundary integral operator on $\p\GO$ defined by
\beq
\Kcal_{\p\GO} [\Gvf](x) = \frac{1}{\Go_d} \int_{\p\GO} \frac{\la y-x, {\nu(y)} \ra}{|x-y|^d} \Gvf(y) dS(y), \quad x \in \p\GO,
\eeq
where $\Go_d=1/2\pi$ if $d=2$ and $1/4\pi$ if $d=3$, and $\nu(y)$ denotes the outward unit normal to $\p\GO$ at $y \in \p\GO$. It naturally appears when solving the classical Dirichlet problem using layer potentials, and is commonly called the double layer potential. The NP operator can be realized as a self-adjoint operator on $H^{1/2}(\p\GO)$, the Sobolev space on $\p\GO$ \cite{KPS} (see also the recent survey \cite{AKMP}). If $\p\GO$ is smooth ($C^{1, \Ga}$ for some $\Ga>0$ to be precise), then it is a compact operator and has a countable number of eigenvalues accumulating to $0$. It is known that the NP eigenvalues (eigenvalues of the NP operator) are confined in the interval $(-1/2, 1/2]$ (see, for example, \cite[Chapter XI, Section 11]{Kellog-book} or \cite{AKMP}).

The NP spectrum depends heavily on geometry of the surface (or the curve) on which the operator is defined. In particular, as the boundary $\p\GO$ becomes `singular' in some sense, the spectrum seems to approach to the bounds $\pm 1/2$. For example, if $\GO$ consists of two strictly convex planar domains and boundaries get closer, then more and more eigenvalues of the corresponding NP operator approach $\pm 1/2$ \cite{BT, BT2}. If a planar curvilinear domain has corners, then the essential spectrum of the NP operator is an interval whose end-points are determined by the smallest angle of the corners \cite{PP1, PP2} (see also \cite{BZ}) (in this case, essential spectrum possibly except $0$ consists of absolutely continuous spectrum \cite{KLY, Perfekt}). If a corner gets sharper and the domain becomes needle-like around the corner, then the essential spectrum approaches $[-1/2,1/2]$. For rectangles whose corner angles are $\pi/2$, the essential spectrum is fixed to be $[-1/4,1/4]$. However, it is shown in \cite{HKL} by numerical computations that there appear more and more eigenvalues outside the interval, which approaches to $\pm 1/2$, as the aspect ratio of the rectangle becomes larger.

Motivated by above observation, it is proved in the recent paper \cite{AKM21} that if $D_R$ is a rectangular shape planar domain of the aspect ratio $R$, then for any sequence $R_j$ of positive numbers tending to $\infty$ as $j \to \infty$, the NP spectra are densely distributed in $[-1/2, 1/2]$. More precisely,
\beq\label{2D}
\ol{\cup_{j=1}^\infty \Gs(\Kcal_{\p D_j})} = [-1/2, 1/2].
\eeq
Here and afterwards, $\Gs(\Kcal_{\p \GO})$ denotes the spectrum of the NP operator $\Kcal_{\p \GO}$ on $H^{1/2}(\p \GO)$. This proves that more and more NP eigenvalues appear outside the  essential spectrum $[-1/4,1/4]$ to densely fill up $[-1/2, 1/2] \setminus [-1/4,1/4]$. This is in accordance with the numerical finding in \cite{HKL}. A similar spectral property is shared by a sequence of ellipses. Since the NP eigenvalues on ellipses are explicitly known, it can be shown without difficulty that the spectral property \eqnref{2D} holds for ellipses of the form $x_1^2/R_j^2 + x_2^2 < 1$ (see \cite{AKM21} for a proof). It says that even if the totality of the spectrum is countable, it is dense in $[-1/2, 1/2]$, and it holds regardless of choices of the sequence $R_j$.

The purpose of this paper is to investigate the spectral structure of the NP operator on thin domains including ellipsoids and prove results similar to \eqnref{2D}. Unlike the two-dimensional case, there are two different kinds of thinness in three dimensions: thin and long (like prolate spheroids), thin and flat (like oblate ellipsoids). As we will see,
three-dimensional bounded domains exhibit the
NP spectral structure different from that of two-dimensional ones. In two dimensions, the NP spectrum always appears in pairs $\pm \Gl$, which is due to existence of harmonic conjugates. However, there are domains in three dimensions where the NP operators have only positive eigenvalues: the NP eigenvalues on a sphere are $1/(4n+2)$ for $n = 0, 1, 2\, \ldots$ \cite{Poi1}, and they are all positive on prolate spheroids \cite{AA}. Thus, the property \eqnref{2D} does not hold for prolate spheroids. It is shown in \cite{Ahner} that there is an oblate ellipsoid having a negative eigenvalue. To the best of our knowledge, this is the first example of three-dimensional domains with a negative NP eigenvalue.
Recently, it is proved in \cite{MR-SPMJ-20} that the NP operator on the boundary of strictly convex domains in three dimensions can have at most finitely many eigenvalues.  If the boundary of the domain has a concave part like tori, then there are (infinitely) many negative eigenvalues (see \cite{AJKKM, JK, MR-SPMJ-20}).

Here we discuss a possible advantage of having negative eigenvalues. Suppose that a three-dimensional domain $\GO$ has $k$ as its dielectric constant, while the background matrix $\Rbb^3 \setminus \GO$ does $1$. Then plasmon resonance in the quasi-static limit occurs if
\beq\label{fredholm}
\frac{k+1}{2(k-1)}= \Gl,
\eeq
where $\Gl$ is an eigenvalue of the NP operator on $\p\GO$ (see \cite{Grieser}). Since $\Gl$ lies in $(-1/2, 1/2]$, \eqnref{fredholm} can be fulfilled only when $k$ is negative (so that $\GO$ is a meta-material with the negative dielectric constant). The relation \eqnref{fredholm} can be achieved by a larger $k$ (the smaller $|k|$) if $\Gl$ is negative (see Figure \ref{negative}). This may yield an advantage in practice even though verifying it is out of reach of mathematical research. We also mention a recent work \cite{AKMN} where it is shown by numerical computation that the spectral property of the NP operator (in relation to the cloaking by anomalous localized resonance) on the torus is quite different from that on strictly convex surfaces.

\begin{figure}[ht!]
\begin{center}
\epsfig{figure=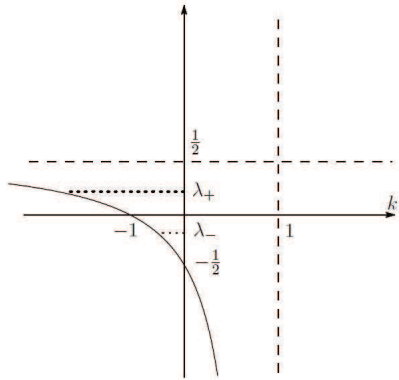,width=6cm}
\end{center}
\caption{The curve is the graph of $\frac{k+1}{2(k-1)}$ and $\Gl_+, \ \Gl_-$ are positive and negative NP eigenvalues, respectively. The relation \eqnref{fredholm} for $\Gl_-$ can be satisfied by a larger $k$ (the smaller $|k|$) than that for $\Gl_+$.} \label{negative} \end{figure}

We now present the main results of this paper. Let us begin with the prolate spheroids. Let $\GP_R$ be a prolate spheroid, namely, for $R \ge 1$,
\beq\label{prolate}
\GP_R:= \Big\{ (x_1,x_2, x_3): x_1^2 + x_2^2 + \frac{x_3^2}{R^2} < 1 \Big\}.
\eeq
If we dilate $\GP_R$ by $R^{-1}$, $\GP_R$ becomes thin. That is why we call them `thin' domains. The NP spectrum is invariant under dilation.

We obtain the following proposition for prolate spheroids.

\begin{prop}\label{prop:prolate}
Let $\GP_R$ be the prolate spheroid defined by \eqnref{prolate}. If $R_j$ is a sequence of numbers such that $R_j \ge 1$ for all $j$ and $R_j \to \infty$ as $j \to \infty$, then
\beq\label{main:prolate1}
[0, 1/2] \subset \ol{\cup_{j=1}^\infty \Gs(\Kcal_{\p \GP_{R_j}})} .
\eeq
\end{prop}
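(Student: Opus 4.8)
The plan is to exploit the fact that the NP spectrum on prolate spheroids is explicitly computable via separation of variables in prolate spheroidal coordinates. The eccentricity of $\GP_R$ is $e = \sqrt{R^2-1}/R$, which tends to $1$ as $R \to \infty$, so it suffices to index the relevant quantities by $R$ (or equivalently by the coordinate surface parameter). After passing to prolate spheroidal coordinates, the single layer potential diagonalizes on spherical harmonics, and the NP operator $\Kcal_{\p\GP_R}$ acts on the $m$-th order sector with eigenvalues expressible through associated Legendre functions $P_n^m$ and $Q_n^m$ evaluated at the fixed coordinate $\xi_0 = \xi_0(R)$ of the boundary surface. Concretely, I would record (citing \cite{AA} or deriving it) that the NP eigenvalues are
\beq\label{plan:eig}
\Gl_{n,m}(R) = \frac{1}{2} + (2n+1)\, P_n^m(\xi_0)\, \frac{d}{d\xi}Q_n^m(\xi_0) \cdot (\text{normalization}),
\eeq
or rather the clean product form in which each $\Gl_{n,m}(R) \in (0,1/2]$; the exact algebraic shape is a routine computation I would not grind through here, but the structural point is that $\Gl_{n,m}(R)$ is a continuous function of $\xi_0$, hence of $R$.

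The core of the argument is then a two-parameter limiting analysis. First I would fix $m$ and let $R \to \infty$ (so $\xi_0 \to 1^+$): in this regime one shows, using the asymptotics of $P_n^m$ and $Q_n^m$ near the logarithmic singularity at $\xi = 1$, that $\Gl_{0,m}(R) \to 1/2$ for each fixed $m$; this recovers that the top of the spectrum approaches $1/2$. Second, and this is where the density is produced, I would show that for fixed large $R$ the family $\{\Gl_{n,m}(R) : n \ge m,\ m \ge 0\}$ already spreads across a substantial subinterval of $(0,1/2]$, and that as $R$ increases the ``lowest reached value'' stays near $0$ (it always does, since $\Gl_{n,m} \to 0$ as $n \to \infty$ for fixed $R$, the operator being compact) while the ``gaps'' between consecutive eigenvalues in the list shrink. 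The cleanest way to get the full interval $[0,1/2]$ in the closure of the union is a monotonicity/continuity argument: show that each $\Gl_{0,m}(R)$, viewed as $R$ ranges over $[1,\infty)$, traces out a continuous path from its value on the sphere up to (the limit) $1/2$, and that by varying $m$ these paths, together with the higher $\Gl_{n,m}$, collectively sweep out every value in $[0,1/2]$ — then for an arbitrary target $t \in [0,1/2]$ and arbitrary $\Gve > 0$, pick $m$ and $R$ with $|\Gl_{0,m}(R) - t| < \Gve$; since $R_j \to \infty$ is cofinal, one can further perturb to land within $\Gve$ of some $R_{j}$ in the given sequence, using continuity in $R$.

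The step I expect to be the main obstacle is making the sweeping argument genuinely cover \emph{every} point of $[0,1/2]$ rather than just a dense-looking scattering, while only having the freedom to choose $R$ from a prescribed sequence $R_j \to \infty$. Continuity in $R$ is the tool, but one must be careful: for a fixed eigenvalue branch $\Gl_{n,m}(R)$, the value at $R = R_j$ need not be close to a prescribed target just because \emph{some} real $R$ achieves it. The resolution I would pursue is to show that for each fixed $m$ the map $R \mapsto \Gl_{0,m}(R)$ is not only continuous but has the interval of its values accumulating densely as $m$ grows — more precisely, that $\bigcup_m \overline{\{\Gl_{0,m}(R_j): j\}}$ is already dense in a neighborhood of $1/2$ because $\Gl_{0,m}(R_j) \to 1/2$ as \emph{either} $m\to\infty$ or $j \to \infty$, and similarly handle the lower part of $[0,1/2]$ by letting $n \to \infty$ along the fixed large $R_j$. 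Combining the ``$n\to\infty$ for fixed large $R_j$ fills near $0$'' with ``appropriate $n,m$ choices fill the middle'' and ``$\Gl_{0,m}\to 1/2$ fills near the top'' should close the gap, but verifying that the middle is actually filled — i.e., getting quantitative control on the spacing of $\{\Gl_{n,m}(R_j)\}$ inside $(0,1/2)$ uniformly enough — is the delicate point requiring the Legendre-function asymptotics to be pushed through carefully.
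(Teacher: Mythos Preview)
Your approach via explicit eigenvalues and Legendre-function asymptotics is genuinely different from the paper's, and the obstacle you flag at the end is not merely ``delicate'' but, as far as anyone knows, unresolved. The paper itself remarks that it is ``unlikely'' the proposition can be proved from the explicit formula $\Gl_{m,n}(L)=(-\tfrac12)(-1)^m\frac{(n-m)!}{(n+m)!}(L^2-1)(P_n^m Q_n^m)'(L)$, because not enough is known about the value distribution of these Legendre expressions. Concretely: your sweeping argument needs, for an \emph{arbitrary} sequence $R_j\to\infty$, that the countable set $\{\Gl_{m,n}(R_j):m,n,j\}$ leaves no gaps in $[0,1/2]$. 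Continuity of each branch $R\mapsto\Gl_{m,n}(R)$ gives nothing here, since for fixed $(m,n)$ the values $\Gl_{m,n}(R_j)$ form a sequence in the range of that branch with no control over which points are hit. What you actually need is a spacing bound --- something like $\sup_n|\Gl_{0,n+1}(R)-\Gl_{0,n}(R)|\to 0$ as $R\to\infty$ uniformly across the bulk of $(0,1/2)$ --- and that quantitative input on Legendre functions is not available. Your outline essentially rediscovers the argument behind the paper's Theorem~\ref{thm:prolate2}, which does yield $\cup_{R\ge R_0}\Gs(\Kcal_{\p\GP_R})=(0,1/2]$ when $R$ ranges over a \emph{continuum}; the passage from a continuum to a prescribed discrete sequence is exactly the missing step, and the paper treats the two results as genuinely different in strength.

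The paper sidesteps all of this with an approximate-eigenfunction (Weyl-sequence) argument that never locates a single eigenvalue. Restricting $\Kcal_{\p\GP_R}$ to functions of the axial variable $x_3$ alone gives a one-dimensional integral operator $\Hcal_R$; the kernel of $\Hcal_R$ is shown to converge, as $R\to\infty$, to convolution with an explicit kernel $L_0$ whose Fourier transform $\widehat{L_0}$ is continuous, even, strictly decreasing on $[0,\infty)$, with $\widehat{L_0}(0)=1/2$ and $\widehat{L_0}(\xi)\to 0$. For each $\Gl\in(0,1/2]$ one chooses $\xi_0$ with $\widehat{L_0}(\xi_0)=\Gl$, builds a wave packet $g_\Gr$ concentrated near frequency $\xi_0$ and supported in $|x_3|\le R^{1-\Gs}$, and proves by direct kernel estimates that $\|(\Gl I-\Kcal_{\p\GP_R})[\psi_\Gr]\|/\|\psi_\Gr\|\to 0$. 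If $\Gl$ lay at positive distance from every $\Gs(\Kcal_{\p\GP_{R_j}})$, the resolvents would be uniformly bounded, contradicting this. The advantage of this route is that it is insensitive to the choice of sequence $R_j$ and requires no fine information about eigenvalue locations; the price is the analytic work of proving the kernel convergence $\Hcal_R\to L_0*$ with the requisite error bounds.
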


Since $\Gs(\Kcal_{\p \GP_{R}}) \subset (0, 1/2]$ if $R \ge 1$ as proved in \cite{AA}, we obtain the following theorem as an immediate consequence.

\begin{theorem}\label{thm:prolate}
Let $\GP_R$ be the prolate spheroid defined by \eqnref{prolate}. If $R_j$ is a sequence of numbers such that $R_j \ge 1$ for all $j$ and $R_j \to \infty$ as $j \to \infty$, then
\beq\label{main:prolate}
\ol{\cup_{j=1}^\infty \Gs(\Kcal_{\p \GP_{R_j}})} = [0, 1/2].
\eeq
\end{theorem}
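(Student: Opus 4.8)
The theorem follows at once from Proposition~\ref{prop:prolate}: since $\Gs(\Kcal_{\p\GP_R})\subset(0,1/2]$ for every $R\ge1$ by \cite{AA}, we have $\ol{\cup_j\Gs(\Kcal_{\p\GP_{R_j}})}\subset\ol{(0,1/2]}=[0,1/2]$, and combining this with \eqnref{main:prolate1} gives \eqnref{main:prolate}. So the whole content is Proposition~\ref{prop:prolate}, and I describe how I would prove that.

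The plan is to diagonalize $\Kcal_{\p\GP_R}$ explicitly in prolate spheroidal coordinates $(\xi,\eta,\Gvf)$, $\xi\ge1$, $|\eta|\le1$, with foci at $x_3=\pm\sqrt{R^2-1}$; after the normalizing dilation, $\p\GP_R=\{\xi=\xi_0\}$ with $\xi_0=R/\sqrt{R^2-1}$, so $\xi_0\to1^+$ as $R\to\infty$ (and $\xi_0\to\infty$ at $R=1$, the sphere). Separation of variables for $\GD$ gives interior solutions $P_n^m(\xi)P_n^m(\eta)e^{im\Gvf}$ and decaying exterior solutions $Q_n^m(\xi)P_n^m(\eta)e^{im\Gvf}$, with $P_n^m,Q_n^m$ the associated Legendre functions on $(1,\infty)$. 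On $\{\xi=\xi_0\}$ the measure $h_\xi^{-1}\,dS$ is a constant multiple of $d\eta\,d\Gvf$, so the single layer potential of a spheroidal harmonic is again a spheroidal harmonic; hence, exactly as for the sphere, $P_n^m(\eta)e^{im\Gvf}$ restricted to $\{\xi=\xi_0\}$ is an eigenfunction of $\Kcal_{\p\GP_R}$, and matching the interior and exterior solutions in value at $\xi_0$ and reading off the jump of the normal derivative gives
\[
\Gl_{n,m}(\xi_0)=\frac{(P_n^mQ_n^m)'(\xi_0)}{2\,W\{P_n^m,Q_n^m\}(\xi_0)},\qquad n\ge0,\ 0\le m\le n,
\]
$W$ the Wronskian (explicitly known); one checks $\Gl_{n,m}(\infty)=1/(2(2n+1))$, recovering the sphere. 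This is essentially the computation of \cite{AA}, which I would cite or reproduce.

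It suffices to approximate every point of $[0,1/2]$ by zonal eigenvalues $\Gl_{n,0}$, for which, since $W\{P_n,Q_n\}(\xi)=-(\xi^2-1)^{-1}$, one has $\Gl_{n,0}(\xi_0)=-\tfrac12(\xi_0^2-1)(P_nQ_n)'(\xi_0)$. Put $\xi_0=\cosh\Ga$ and $t=(n+\tfrac12)\Ga$. The classical uniform asymptotics of Legendre functions of large degree near the turning point read
\[
P_n(\cosh\Ga)=\Big(\tfrac{\Ga}{\sinh\Ga}\Big)^{1/2}\big(I_0(t)+O(\tfrac1n)\big),\qquad Q_n(\cosh\Ga)=\Big(\tfrac{\Ga}{\sinh\Ga}\Big)^{1/2}\big(K_0(t)+O(\tfrac1n)\big),
\]
uniformly for $\Ga$ in a neighbourhood of $0$. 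Feeding these, together with the corresponding expansions of the $\Ga$-derivatives, into $\Gl_{n,0}(\cosh\Ga)=-\tfrac12\sinh^2\!\Ga\cdot(P_nQ_n)'(\cosh\Ga)$, one finds that in the joint limit $\Ga\to0$, $n\to\infty$ with $t\to s$ fixed,
\[
\Gl_{n,0}(\cosh\Ga)\ \longrightarrow\ \Gs_0(s):=-\tfrac{s}{2}\,\big(I_0(s)K_0(s)\big)',
\]
which is precisely the symbol of the Neumann--Poincar\'e operator on the infinite unit cylinder at axial frequency $s$. Since $I_0(s)K_0(s)\sim-\log(s/2)-\Gg$ as $s\to0^+$ and $I_0(s)K_0(s)\sim1/(2s)$ as $s\to\infty$, one gets $\Gs_0(0^+)=1/2$ and $\Gs_0(\infty)=0$; $\Gs_0$ being continuous on $(0,\infty)$, its range is an interval whose closure contains $[0,1/2]$. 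Finally, fix $s>0$; along the given sequence choose $\Ga_j>0$ with $\cosh\Ga_j=R_j/\sqrt{R_j^2-1}$ (so $\Ga_j\to0$) and $n_j=\lfloor s/\Ga_j\rfloor$, so that $(n_j+\tfrac12)\Ga_j\to s$; then $\Gl_{n_j,0}(\cosh\Ga_j)\to\Gs_0(s)$, whence $\Gs_0(s)\in\ol{\cup_j\Gs(\Kcal_{\p\GP_{R_j}})}$. Thus the closed set $\ol{\cup_j\Gs(\Kcal_{\p\GP_{R_j}})}$ contains the range of $\Gs_0$, hence its closure, hence $[0,1/2]$, which is \eqnref{main:prolate1}.

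The crux is making the limit $\Gl_{n,0}\to\Gs_0$ rigorous: because the eigenvalue formula involves the derivative $(P_nQ_n)'$, one needs uniform asymptotics of $P_n(\cosh\Ga)$ and $Q_n(\cosh\Ga)$ that survive differentiation in $\Ga$ and that remain valid as $\Ga\to0$ with controlled dependence on $n$ — obtainable either by differentiating Olver-type uniform expansions with explicit remainder, or by rewriting $\Gl_{n,0}$ through $P_n,Q_n,P_n^1,Q_n^1$ via the Legendre recurrences and passing to the limit in each factor; tracking the joint $(\Ga,n)$-dependence of the errors is the delicate point. The eigenvalue formula, the limiting values of $\Gs_0$, and the final diagonal extraction are then routine.
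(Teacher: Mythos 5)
Your derivation of the theorem from Proposition~\ref{prop:prolate} is exactly the paper's: by \cite{AA} one has $\Gs(\Kcal_{\p\GP_R})\subset(0,1/2]$ for $R\ge1$, so only the inclusion \eqnref{main:prolate1} matters. For that inclusion, however, you take a genuinely different route. The paper never uses the explicit spheroidal-harmonic diagonalization: it parametrizes $\Kcal_{\p\GP_R}$ by the axial variable, shows that on modulated bump quasimodes supported in $|x_3|\le R^{1-\Gs}$ the operator converges in $L^2$ to convolution with the kernel $L_0$ of \eqnref{L0}, proves that $\widehat{L_0}$ has range $(0,1/2]$ (Lemma~\ref{lem:Lhat}), and concludes by a resolvent-bound (Weyl sequence) contradiction. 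You instead use the explicit eigenvalues \eqnref{ev:prolate} (which the paper reserves for Theorem~\ref{thm:prolate2}) and a joint scaling limit $\Ga\to0$, $(n+\tfrac12)\Ga\to s$ of the zonal eigenvalues, obtaining the cylinder symbol $\Gs_0(s)=-\tfrac{s}{2}(I_0K_0)'(s)$; this is consistent, since by Nicholson's integral $\int_0^{\pi/2}K_0(2z\sin t)\,dt=\tfrac{\pi}{2}I_0(z)K_0(z)$ one checks that $\widehat{L_0}(\xi)=\Gs_0(2\pi|\xi|)$, so both methods produce the same limiting symbol with range dense in $[0,1/2]$, and your diagonal choice $n_j\approx s/\Ga_j$ along the given sequence $R_j$ then yields \eqnref{main:prolate1}. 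This is a legitimate alternative — notably, the introduction of the paper asserts it is ``unlikely'' the theorem can be obtained from the explicit Legendre-function results, but your scaling limit supplies precisely the missing value-distribution information. The one step you leave open, correctly flagged as the crux, is the uniformity of the asymptotics in the joint regime including the derivative $(P_nQ_n)'$; this is genuinely available, either from Olver-type Bessel uniform expansions (valid for $\Ga\in(0,A]$ with relative error $O(1/n)$, with companion expansions for $P_n^1,Q_n^1$), or more elementarily by passing to the limit, via dominated convergence, in the Laplace--Heine integral representations $P_n(\cosh\Ga)=\pi^{-1}\int_0^\pi(\cosh\Ga+\sinh\Ga\cos\Gf)^n d\Gf$ and $Q_n(\cosh\Ga)=\int_0^\infty(\cosh\Ga+\sinh\Ga\cosh u)^{-n-1}du$ and their $\Ga$-derivatives, so the plan closes without heavy machinery. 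The trade-off: your argument is more explicit but tied to the integrable geometry of the spheroid, whereas the paper's quasimode method needs no special-function asymptotics and is what allows the same scheme to handle the oblate ellipsoids of Section~\ref{sec:3} and the general flat domains of Section~\ref{sec:4}, where no explicit diagonalization exists.
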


Theorem \ref{thm:prolate} shows that totality of eigenvalues of $\Kcal_{\p \GP_{R_j}}$ is dense in $[0, 1/2]$ regardless of choice of the sequence $R_j$ as long as $R_j \to \infty$.
There are significant works on the NP spectrum on ellipsoids \cite{Ahner, AA, ADR, Mart, Ritt}. For example, NP eigenvalues on prolate spheroids are expressed in terms of values of Legendre functions (see \eqnref{ev:prolate}). However, it is unlikely that Theorem \ref{thm:prolate} (and Theorem \ref{thm:oblate} below) can be proved using those results since we do not have enough knowledge about value distributions of Legendre functions. Nonetheless, we are able to prove the following theorem based on those results, which is in good comparison with Theorem \ref{thm:prolate}: It shows that the totality (in continuum) of the NP eigenvalues on prolate spheroids covers the interval $(0,1/2]$ while Theorem \ref{thm:prolate} shows that the NP eigenvalues on a sequence of prolate spheroids, which is countable, are dense in $[0,1/2]$ regardless of the choice of the sequence.

\begin{theorem}\label{thm:prolate2}
Let $\GP_R$ be the prolate spheroid defined by \eqnref{prolate}. It holds that for any $R_0 \ge 1$,
\beq\label{prolate2}
\cup_{R\geq R_0}\Gs(\Kcal_{\p\GP_R}) = (0, 1/2].
\eeq
\end{theorem}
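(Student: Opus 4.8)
The plan is to use the explicit description of the NP eigenvalues on prolate spheroids in terms of Legendre functions (the formula referred to as \eqnref{ev:prolate} in the excerpt, which expresses them through ratios of Legendre functions $P_n^m$ and $Q_n^m$ evaluated at the focal parameter $\xi = \xi(R) = R/\sqrt{R^2-1}$), together with two continuity/monotonicity facts. First I would note that $\cup_{R\ge R_0}\Gs(\Kcal_{\p\GP_R}) \subset (0,1/2]$ already follows from the result of \cite{AA} quoted before Theorem \ref{thm:prolate}, so only the reverse inclusion needs proof. For that, fix a target value $\lambda \in (0,1/2]$. I would show that $\lambda$ is actually \emph{attained} as an NP eigenvalue on some prolate spheroid $\GP_R$ with $R \ge R_0$, by an intermediate-value argument: each eigenvalue branch $\lambda_{n,m}(R)$ is a continuous function of $R$ on $[1,\infty)$ (continuity of Legendre functions in the argument $\xi$, with $\xi(R)$ continuous in $R$), and one must identify, for a suitable branch, the two endpoints of its range.

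The key computation is to evaluate the limiting behaviour of the relevant eigenvalue branches at the two extremes $R \to \infty$ (i.e. $\xi \to 1^+$) and $R \to R_0$ (or, for the global statement, to exploit the freedom to take $R$ anywhere in $[R_0,\infty)$). As $R \to \infty$ the spheroid degenerates to a segment, and the analysis in the proof of Proposition \ref{prop:prolate} already shows that the eigenvalues of $\Kcal_{\p\GP_R}$ become dense in $[0,1/2]$; in particular, for each $n$ there is a branch whose value tends to $1/2$ as $R\to\infty$ (these are the branches responsible for \eqnref{main:prolate1}). At the other end, as $R\to 1^+$ the spheroid becomes the unit ball and $\lambda_{n,m}(R) \to 1/(4n+2)$, the Poincaré value \cite{Poi1}. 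Thus, picking $n$ large enough that $1/(4n+2) < \lambda$ (possible since $\lambda>0$), the branch connecting $1/(4n+2)$ at $R=1$ to its $R\to\infty$ limit sweeps across $\lambda$; one then checks that this crossing can be arranged to occur at some $R \ge R_0$, either because the $R\to\infty$ limit of that branch exceeds $\lambda$ and the branch is (eventually) monotone, or simply by choosing $n$ large enough that the branch stays below $\lambda$ on $[1,R_0]$ and only reaches $\lambda$ later. Finally, $\lambda = 1/2$ is handled separately: it is not attained (the spectrum lies in the open-at-$0$, closed-at-$1/2$ interval but $1/2$ itself need not be an eigenvalue), yet $1/2 \in \cup_{R\ge R_0}\Gs(\Kcal_{\p\GP_R})$ must be interpreted via the closure or shown to be a genuine eigenvalue for some $R$ — I would check whether the lowest branch $\lambda_{1,1}(R)$ or $\lambda_{0,0}$-type branch actually hits $1/2$; if the statement \eqnref{prolate2} has equality without closure, then some branch attains $1/2$ exactly and I would verify this from the Legendre-function formula at the appropriate $\xi$.

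The main obstacle I anticipate is the monotonicity/range control of the individual eigenvalue branches $\lambda_{n,m}(R)$ as functions of $R$: the excerpt explicitly warns that "we do not have enough knowledge about value distributions of Legendre functions," so proving that a given branch is monotone, or even just that its range is exactly an interval with the claimed endpoints, is delicate. I would circumvent a full monotonicity proof by leaning on continuity plus the two computed endpoint limits ($1/(4n+2)$ at $R=1$ and the dense-in-$[0,1/2]$ behaviour at $R=\infty$ from Proposition \ref{prop:prolate}) and an intermediate value argument, which only needs the endpoints and connectedness of $[R_0,\infty)$, not monotonicity. The residual technical points are then: (i) justifying that the eigenvalue branches are genuinely continuous in $R$ (uniform convergence of the relevant Legendre ratios on compact $\xi$-intervals, plus care near $\xi=1$), and (ii) ensuring the crossing value of $R$ can be pushed to be $\ge R_0$ by taking the branch index $n$ large, which should follow because for fixed $R$ in the compact set $[1,R_0]$ the eigenvalues $\lambda_{n,m}(R)$ are bounded away from $1/2$ uniformly (they accumulate only at $0$), so any branch eventually reaching a value $\lambda$ close to $1/2$ must do so at some $R>R_0$.
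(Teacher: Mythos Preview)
Your overall scaffold—continuity of eigenvalue branches in $R$ together with the intermediate value theorem—is the same as the paper's, and the inclusion $\subset (0,1/2]$ via \cite{AA} is correct. But there is a genuine gap at the heart of the argument.

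You write that ``for each $n$ there is a branch whose value tends to $1/2$ as $R\to\infty$ (these are the branches responsible for \eqnref{main:prolate1}).'' This inference is invalid: Proposition~\ref{prop:prolate} only gives density of the \emph{totality} of eigenvalues over a sequence $R_j\to\infty$, and that does not imply that any \emph{single} branch $\Gl_{m,n}(R)$ converges to $1/2$. Density could in principle be achieved by different branches approaching $1/2$ at different scales without any one of them having limit $1/2$. The paper fills this gap by a direct computation: using Rodrigues' formulas for $P_n$ and $Q_n$, it shows that $(P_nQ_n)'(L)=\tfrac{1}{2(1-L)}+O(|\log(L-1)|)$ as $L\to 1^+$ (equivalently $R\to\infty$), whence from \eqnref{ev:prolate} one gets $\Gl_{0,n}(L)\to 1/2$. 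Combined with Martensen's $1/2$-property $\sum_{m=-n}^n\Gl_{m,n}(L)=1/2$ and positivity, this simultaneously forces $\Gl_{m,n}(L)\to 0$ for $m\neq 0$. Identifying the specific branch $m=0$ is exactly the missing idea in your sketch, and it cannot be extracted from Proposition~\ref{prop:prolate}.

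Once $\Gl_{0,n}(R)\to 1/2$ is known, your endpoint strategy works cleanly with one simplification: for the lower end you do not need uniform bounds on $[1,R_0]$, only that $\Gl_{0,n}(R_0)\to 0$ as $n\to\infty$, which holds because the NP eigenvalues on the fixed surface $\p\GP_{R_0}$ accumulate only at $0$. The intermediate value theorem on $[R_0,\infty)$ then gives $\cup_{n}\{\Gl_{0,n}(R):R\ge R_0\}\supset(0,1/2)$. Finally, your hesitation about $\Gl=1/2$ is unnecessary: $1/2$ is always an NP eigenvalue (the constant function is an eigenfunction), so $1/2\in\Gs(\Kcal_{\p\GP_R})$ for every $R$ and no separate argument is required.
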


We then turn our attention to oblate ellipsoids. Let $a_j$ ($j=1,2$) be positive numbers.  For a positive number $R$,  let $\GO_R$ be an oblate ellipsoid defined by
\beq\label{oblate}
\GO_R:= \left\{ (x_1, x_2, x_3): \frac{x_1^2}{(a_1 R)^2} + \frac{x_2^2}{(a_2 R)^2} + x_3^2 < 1 \right\}.
\eeq
If $a_1=a_2$, then $\GO_R$ is an oblate spheroid.

We obtain the following theorem for oblate ellipsoids:
\begin{theorem}\label{thm:oblate}
Let $\GO_R$ be the oblate ellipsoid defined by \eqnref{oblate}. If $R_j$ is a sequence of positive numbers such that $R_j \to \infty$ as $j \to \infty$, then
\beq\label{main:oblate}
\ol{\cup_{j=1}^\infty \Gs(\Kcal_{\p \GO_{R_j}})} = [-1/2, 1/2].
\eeq
\end{theorem}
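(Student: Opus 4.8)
The plan is to reduce the oblate problem to a one-dimensional spectral analysis via separation of variables in oblate spheroidal coordinates, exactly as was done for prolate spheroids, and then to cover the interval $[-1/2,1/2]$ by letting both the mode index and the flatness parameter vary. First I would recall the known diagonalization: on an oblate ellipsoid of revolution ($a_1=a_2$) the NP operator is diagonalized by (products of) spherical harmonics $Y_n^m$ composed with the oblate spheroidal coordinate map, and the corresponding eigenvalue $\Gl_{n}^{m}(R)$ is expressed through ratios of Legendre functions $P_n^m$, $Q_n^m$ evaluated at the (purely imaginary) focal parameter determined by $R$; see the cited works \cite{Ahner, AA, ADR, Ritt}. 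The key qualitative facts I want to extract from these formulas are: (i) as $R\to\infty$ the focal parameter $\xi_R\to 0$, i.e. the reference surface degenerates to a disk; (ii) in this limit the eigenvalue $\Gl_n^m(R)$ tends to an explicit limit value $\Gl_n^{m,\infty}\in[-1/2,1/2]$ depending on $n,m$; and (iii) the set $\{\Gl_n^{m,\infty}: n\ge 0,\ 0\le m\le n\}$ is dense in $[-1/2,1/2]$. Granting these three facts, a diagonal argument (choosing for each target $\Gl\in[-1/2,1/2]$ a pair $(n,m)$ with $\Gl_n^{m,\infty}$ close to $\Gl$, then $j$ large so that $R_j$ is large and $\Gl_n^m(R_j)$ close to $\Gl_n^{m,\infty}$) gives $[-1/2,1/2]\subset \ol{\cup_j \Gs(\Kcal_{\p\GO_{R_j}})}$, and the reverse inclusion is automatic since NP eigenvalues always lie in $(-1/2,1/2]$. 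The case $a_1\ne a_2$ (genuine oblate ellipsoid, not a spheroid) is then handled either by the same kind of separation of variables in general ellipsoidal (Lam\'e) coordinates, or — more robustly — by a perturbation/continuity argument in the axis ratio $a_1/a_2$ together with a limiting flat-domain result, noting that the conclusion is a density statement that is stable under the relevant perturbations.

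A cleaner alternative, which I would actually prefer to carry out in detail, is to bypass the delicate asymptotics of Legendre functions and instead reduce to the \emph{flat domain} statement announced in the abstract (the "similar spectral property for flat three dimensional domains"). The idea: after dilating $\GO_R$ by $R^{-1}$ in the $x_1,x_2$ directions we obtain a family of flat domains collapsing onto the unit disk (or the ellipse $x_1^2/a_1^2+x_2^2/a_2^2<1$) in the plane $x_3=0$; one shows that the NP spectrum of such a thin domain, in the flat limit, "sees" the two-dimensional-like spectral content of the cross-section together with the odd/even splitting across the midplane, and this is exactly what produces negative eigenvalues. Concretely I would: (1) set up the single/double layer potentials on the thin oblate ellipsoid and rescale; (2) identify the limiting operator on the collapsed disk, splitting functions into parts even and odd in $x_3$ — the odd part is governed essentially by a Dirichlet-type problem and the even part by a Neumann-type problem on the disk, and the mismatch of these two is what pushes eigenvalues toward both $+1/2$ and $-1/2$; (3) prove spectral convergence (in the sense needed for the liminf inclusion of spectra) of $\Kcal_{\p\GO_R}$ to this limiting decoupled operator as $R\to\infty$, for instance by a quantitative estimate on the NP kernel restricted to the thin surface; (4) verify that the spectrum of the limiting operator already fills $[-1/2,1/2]$ — here one uses that the Dirichlet and Neumann single-layer/Poincar\'e-type operators on the disk have spectra accumulating at $0$ from \emph{both} sides after the even/odd combination, and in fact a continuum of limit points covering $\pm 1/2$ arises from the edge of the disk (the rim being a codimension-one "sharp" set, analogous to the corner mechanism recalled in the introduction).

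The principal steps, in order, would be: (a) write down and rescale the NP operator on $\GO_R$; (b) establish the flat-limit operator and its spectrum on the disk/ellipse, in particular showing this spectrum is $[-1/2,1/2]$; (c) prove the one-sided spectral convergence $\ \ol{\cup_j \Gs(\Kcal_{\p\GO_{R_j}})}\supset \Gs(\text{limit operator})$, which requires constructing, for each spectral value $\Gl$ of the limit and each $\Ge>0$, an approximate eigenfunction on $\GO_{R_j}$ for $j$ large — this is where almost all the work lies; (d) invoke $\Gs(\Kcal_{\p\GO})\subset(-1/2,1/2]$ for the reverse inclusion and conclude. I expect step (c) — the quasimode construction and the attendant uniform-in-$R$ estimates on the layer potentials of a degenerating surface — to be the main obstacle, since the surface $\p\GO_R$ is becoming singular (two nearly-touching sheets meeting along a sharp rim) and one must control both the near-diagonal singularity of the NP kernel and the interaction between the two sheets; the even/odd symmetry across the midplane should be exploited throughout to make these estimates tractable, and the already-established prolate case (Proposition \ref{prop:prolate}) provides a template for the half of the argument that produces the interval $[0,1/2]$.
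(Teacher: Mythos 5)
Your first route has a concrete gap: it hinges on the claim (your fact (iii)) that the flat-limit values $\Gl_n^{m,\infty}$ of the oblate eigenvalues form a dense subset of $[-1/2,1/2]$. No such value-distribution information about the Legendre (and, for $a_1\neq a_2$, Lam\'e) functions is available, and the paper states explicitly that the explicit eigenvalue formulas cannot be pushed to a proof of this theorem for precisely that reason; a continuity-in-$a_1/a_2$ perturbation also does not by itself transfer a density statement that must hold for an arbitrary prescribed sequence $R_j$. So that branch of the proposal is not a proof strategy that can be completed with known inputs.

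Your preferred second route is structurally the paper's argument --- split $\p\GO_R$ into the two sheets $\GG^\pm$, exploit the even/odd symmetry across the midplane to reach negative spectral values (exactly \eqnref{Gvfdef3D} and \eqnref{fodef}), prove convergence to a limiting operator, build quasimodes, and get the reverse inclusion from the general bound $\Gs(\Kcal_{\p\GO})\subset(-1/2,1/2]$ --- but the limiting operator and the mechanism filling the interval are misidentified. The paper does not collapse the domain: since the NP spectrum is dilation invariant it keeps the thickness fixed and lets the planar axes grow, so the self-sheet kernel \eqnref{K1def} tends to $0$ while the cross-sheet kernel \eqnref{K2def} tends to $\frac{1}{2}P_2$, the planar Poisson kernel at the fixed sheet separation $2$; on even/odd data the limit is $\pm\frac{1}{2}P_2\,*$, whose Fourier multiplier $\pm\frac{1}{2}e^{-4\pi|\xi|}$ sweeps $(0,1/2]$ and $[-1/2,0)$. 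The values near $\pm1/2$ therefore come from low-frequency quasimodes \eqnref{fRx} supported near the center of the ellipsoid, far from the rim --- not from a Dirichlet/Neumann mismatch on the disk or an edge/corner effect at the rim, and ``spectra accumulating at $0$ from both sides'' would at best give density near $0$, not the endpoints. Moreover, your rescaling onto the unit disk makes the two sheets nearly touch, manufacturing exactly the near-touching-sheet and rim difficulties you flag in step (c); in the uncollapsed picture these never arise, and step (c) --- the quantitative kernel estimates and quasimode bounds --- is the entire technical content of the paper's proof (Lemmas \ref{lem:limit} and \ref{lem:limit2}, Proposition \ref{prop:oblate}), which your proposal defers rather than supplies.
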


Theorem \ref{thm:oblate} shows that totality of eigenvalues of $\Kcal_{\p \GO_{R_j}}$ is dense in $[-1/2, 1/2]$. This is rather surprising. As mentioned earlier, $\Kcal_{\p \GO_{R_j}}$ admits at most finitely many negative eigenvalues since $\GO_R$ is strictly convex. However, \eqnref{main:oblate} says that negative eigenvalues in $\cup_{j=1}^\infty \Gs(\Kcal_{\p \GO_{R_j}})$ are dense in $[-1/2,0]$.

Proposition \ref{prop:prolate} and Theorem \ref{thm:oblate} are proved by investigating the limiting behaviour of the NP operators as $R \to \infty$. We show that the NP operator on the prolate spheroids converges (on some test functions) to a certain one-dimensional convolution operator as $R \to \infty$ (see \eqnref{L0}). We prove that the Fourier transform of the convolution kernel has values in $(0,1/2]$ and hence the operator has continuous spectrum $(0,1/2]$, and use this fact to prove Proposition \ref{prop:prolate}.  The NP operator on oblate ellipsoids converges to the two-dimensional Poisson integral evaluated at $2$ or $-2$ (this is so because oblate ellipsoids have the upper and lower parts) (see \eqnref{4100}). The Poisson integral operator has continuous spectrum $(0,1/2]$. But, since this operator is evaluated at $\pm 2$, we are able to prove Theorem \ref{thm:oblate}.

The property \eqnref{main:oblate} seems to be a generic property of thin, flat domains. To demonstrate it, we consider typical thin, flat domains other than oblate ellipsoids. To define such a domain, let $U$ be a bounded planar domain with the Lipschitz continuous boundary $\p U$. Let $\GF$ be the domain in $\Rbb^3$ whose boundary consists of three pieces, namely,
\beq\label{Fboundary}
\p \GF = \GS^+ \cup \GS^- \cup \GS^s
\eeq
where the top and bottom are given by $\GS^{\pm}= U \times \{ \pm 1\}$ and $\GS^s$ is a surface connecting $\p U \times \{ +1\}$ and $\p U \times \{ -1\}$. We assume that $\p \GF$ is Lipschitz continuous. For $R>0$ let
\beq\label{thinflat}
\GF_R:= \{(Rx_1,Rx_2, x_3): (x_1,x_2, x_3)\in \GF \}.
\eeq

We obtain the following theorem using the method of proving Theorem \ref{thm:oblate}.
\begin{theorem}\label{thm:thinflat}
Let $\GF_R$ be the domain defined by \eqnref{thinflat}. If $R_j$ is a sequence such that $R_j \to \infty$ as $j \to \infty$, then
\beq\label{main:thinflat}
\ol{\cup_{j=1}^\infty \Gs(\Kcal_{\p\GF_{R_j}})} = [-1/2, 1/2].
\eeq
\end{theorem}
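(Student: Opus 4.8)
The plan is to mimic the proof of Theorem \ref{thm:oblate}, replacing the explicit oblate spheroidal coordinates by a direct analysis of the NP operator on the rescaled flat domain $\GF_R$. First I would set up coordinates: write points of $\p\GF_R$ using the parametrization inherited from $\GF$, so that $\GS^\pm_R = (RU)\times\{\pm 1\}$ and $\GS^s_R$ is the (now very steep, since it has width $O(R)$ and height $O(1)$) lateral surface. After the dilation $x\mapsto(x_1/R,x_2/R,x_3)$, which does not change the NP spectrum, the top and bottom faces are $U\times\{\pm1\}$ and the lateral surface becomes nearly vertical. The key geometric fact is that, as $R\to\infty$, the interaction of $\Kcal_{\p\GF_R}$ within a single face $\GS^\pm$ degenerates, while the cross interaction between the top face at height $+1$ and the bottom face at height $-1$ (distance exactly $2$ apart) converges to the two–dimensional Poisson integral operator $\Pcal_2$ defined on $U$ by
\beq
\Pcal_t[\psi](x')=\frac{1}{2\pi}\int_U \frac{t}{(|x'-y'|^2+t^2)^{3/2}}\,\psi(y')\,dy',
\eeq
evaluated at $t=2$; the contribution of the lateral surface $\GS^s_R$ should be shown to be negligible in the relevant sense because its normal is asymptotically horizontal and it recedes to infinity in the rescaled picture. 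Concretely, I would express $\Kcal_{\p\GF_R}$ in block form with respect to the decomposition $L^2(\p\GF_R)=L^2(\GS^+_R)\oplus L^2(\GS^-_R)\oplus L^2(\GS^s_R)$ and argue that, tested against functions supported away from $\p U$ and constant in a suitable sense, the operator converges to the $2\times 2$ block matrix $\bigl(\begin{smallmatrix}0 & \Pcal_2\\ \Pcal_2 & 0\end{smallmatrix}\bigr)$ acting on $L^2(U)\oplus L^2(U)$, exactly as in \eqnref{4100} for the oblate case.

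The spectral input is then identical to that used for Theorem \ref{thm:oblate}: the operator $\bigl(\begin{smallmatrix}0 & \Pcal_2\\ \Pcal_2 & 0\end{smallmatrix}\bigr)$ has spectrum $\pm\Gs(\Pcal_2)$, and $\Pcal_2$ — being unitarily equivalent, after scaling $U$, to a Poisson integral on a bounded planar domain — has continuous (absolutely continuous) spectrum filling $(0,1/2]$; one may also simply invoke that the Poisson integral over $\Rbb^2$ at height $t$ has symbol $e^{-t|\xi|}$, so its spectrum as an operator is $[0,1]$ — wait, more precisely, after the normalization in $\Pcal_t$ above the relevant symbol is $\tfrac12 e^{-t|\xi|}$ with range $(0,1/2]$, and localizing to a bounded domain $U$ preserves this as the essential/continuous spectrum. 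Thus the limiting operator has $[-1/2,1/2]$ as (the closure of) its spectrum. The passage from this limiting spectral picture to the conclusion \eqnref{main:thinflat} is the soft part: for any $\Gl\in[-1/2,1/2]$ one constructs a quasimode — a test function supported on $\GS^+_R\cup\GS^-_R$, of the appropriate $\pm$ symmetry, approximating a generalized eigenfunction of $\Pcal_2$ at the corresponding point of its continuous spectrum — and checks $\|(\Kcal_{\p\GF_R}-\Gl)u_R\|\to 0$ while $\|u_R\|$ stays bounded below, which by self-adjointness forces $\mathrm{dist}(\Gl,\Gs(\Kcal_{\p\GF_{R_j}}))\to 0$ along the sequence; combined with the a priori inclusion $\Gs(\Kcal_{\p\GF_R})\subset(-1/2,1/2]$ this yields \eqnref{main:thinflat}.

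The main obstacle I anticipate is controlling the lateral piece $\GS^s_R$ and, relatedly, the behaviour near $\p U$. In the oblate case this is handled automatically by the global ellipsoidal coordinate system and the explicit Legendre-function representation; here one must estimate by hand the boundary integral kernel $\tfrac{\la y-x,\nu(y)\ra}{|x-y|^3}$ when $x$ ranges over the flat faces and $y$ over the steep lateral surface (and vice versa), showing these off-diagonal blocks, when paired with the localized test functions, contribute $o(1)$ as $R\to\infty$. This requires using that $\la y-x,\nu(y)\ra$ is small on $\GS^s_R$ because $\nu(y)$ is almost horizontal there while the height difference is $O(1)$, together with the fact that our test functions vanish near the rim $\p U$; the Lipschitz (rather than smooth) regularity of $\p\GF$ means these estimates must be done with care, but they are of the same nature as the cut-off arguments already needed for the oblate ellipsoid and for the rectangular domains in \cite{AKM21}. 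A secondary technical point is to verify that within each face the "self-interaction" block of $\Kcal_{\p\GF_R}$ genuinely tends to $0$ on the relevant test functions, which follows because on a flat piece $U\times\{+1\}$ one has $\la y-x,\nu(y)\ra=0$ identically for $x,y$ in that same piece, so in fact these blocks vanish exactly — a simplification over the oblate case, where the faces are only asymptotically flat.
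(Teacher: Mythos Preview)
Your overall strategy coincides with the paper's: build quasimodes supported on the two flat faces with the appropriate $\pm$ symmetry, use the exact identity \eqnref{4100} (on a flat face the self-interaction vanishes identically, exactly as you observe), and show that the residual contribution near the rim and on $\GS_R^s$ is negligible. The paper packages this as Proposition \ref{prop:thinflat}, from which the theorem follows by the same soft argument as in subsection \ref{subsec:proofpro}.

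Two points need correction. First, the map $x\mapsto(x_1/R,x_2/R,x_3)$ is an \emph{anisotropic} scaling and does \emph{not} preserve the NP spectrum; only isotropic dilations do. Indeed the entire content of the theorem is that $\Gs(\Kcal_{\p\GF_R})$ varies with $R$, and $\GF_R$ is obtained from $\GF$ by exactly such an anisotropic map. The paper never rescales; it works directly on $\p\GF_R$ with test functions $f_\Gr$ supported in a ball of radius $\sim R^{1-\Gs}$ inside $U_R$. Second, since $\p\GF$ is only assumed Lipschitz, the NP operator is realized as self-adjoint on $H^{1/2}(\p\GF_R)$ rather than $L^2$, so the quasimode estimate must be carried out in the $H^{1/2}$ norm. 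This forces an extra step beyond the oblate case: one must control the Gagliardo seminorm of $\Kcal_{\p\GF_R}[\Gvf_\Gr]$ on the piece of $\p\GF_R$ near $\GS_R^s$, which the paper does via a cutoff $\Gc_2$ and a pointwise Lipschitz bound on the kernel.

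Finally, the lateral-surface control is simpler than you anticipate and does not use the direction of $\nu$ on $\GS_R^s$ (which in fact need not be nearly horizontal; under the anisotropic stretch a non-horizontal normal becomes \emph{more} vertical). Since $\mbox{supp}(\Gvf_\Gr)\subset B(0,mR^{1-\Gs})\times\{\pm1\}$ while every point of $\GS_R^s$ has $|x|\sim R$, one has $|X-Y|\gtrsim R$ whenever $X$ is near $\GS_R^s$ and $Y\in\mbox{supp}(\Gvf_\Gr)$; the trivial bound $|\la Y-X,\nu(Y)\ra|\le|X-Y|$ then gives $|K(X,Y)|\lesssim R^{-2}$, which is all that is needed.
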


One may naturally ask a question if Theorem \ref{thm:prolate} holds for cylinder-like domains or even prolate ellipsoids. One can show that \eqnref{main:prolate1} holds for such domains. But we do not know if the reverse inclusion is true. In the oblate case, the reverse inclusion is always true, namely, the NP spectrum is contained in $[-1/2, 1/2]$.

The rest of the paper is devoted to proving main results:
Proposition \ref{prop:prolate} and Theorem \ref{thm:prolate2} in Section \ref{sec:2}; Theorem \ref{thm:oblate} in Section \ref{sec:3}; Theorem \ref{thm:thinflat} in Section \ref{sec:4}.

We use standard notation of $A \lesssim B$ which means that there is a constant $C$ independent of the parameter $R$ of the given ellipsoids. The meaning of $A \gtrsim B$ is analogous, and $A \sim B$ means both $A \lesssim B$ and $A \gtrsim B$ hold.

\section{Proof of Proposition \ref{prop:prolate} and Theorem \ref{thm:prolate2}}\label{sec:2}

In this section we prove Proposition \ref{prop:prolate} (and Theorem \ref{thm:prolate} as its consequence) and Theorem \ref{thm:prolate2}. Since $\p\GP_R$ is smooth, a non-zero eigenvalue of $\Kcal_{\p \GP_{R}}$ on $L^2(\p\GP_R)$ is automatically an eigenvalue on $H^{1/2}(\p\GP_R)$. Thus it is enough to prove \eqnref{main:prolate1} assuming $\Gs(\Kcal_{\p \GP_{R}})$ is on $L^2(\p\GP_R)$.

\subsection{Parametrization of the NP operator on prolate spheroids}

Let
$$
\Gn(t)= \Gn_R(t):= \sqrt{1-t^2/R^2}.
$$
We parametrize the prolate spheroid $\GP_R$ given by \eqnref{prolate} by $x= (\Gn(x_3) \cos\Gt, \Gn(x_3) \sin \Gt, x_3)$. Let $y = (\Gn(y_3) \cos\Gf, \Gn(y_3) \sin \Gf, y_3)$. Then, straight-forward calculations yield that
\beq\label{dSpro}
\nu(y) = \left(1-\frac{y_3^2}{R^2} + \frac{y_3^2}{R^4} \right)^{-1/2} \left(\Gn(y_3) \cos\Gf, \Gn(y_3) \sin \Gf, \frac{y_3}{R^2} \right)
\eeq
and
\beq\label{dSpro2}
dS(y) = \left(1-\frac{y_3^2}{R^2} + \frac{y_3^2}{R^4} \right)^{1/2} d\Gf dy_3.
\eeq
Thus we have
\begin{align*}
&\frac{1}{4\pi} \frac{\la y-x, {\nu(y)} \ra}{|x-y|^3}  dS(y) \\
&= \frac{1}{4\pi} \frac{(\Gn(y_3)^2- \Gn(x_3)\Gn(y_3) \cos(\Gt-\Gf) + \frac{y_3}{R^2} (y_3-x_3))}{\,\,[\big( \Gn(x_3)^2+ \Gn(y_3)^2- 2\Gn(x_3)\Gn(y_3) \cos(\Gt-\Gf) \big) + (x_3-y_3)^2]^{3/2}\,}\, d\Gf dy_3.
\end{align*}

Let $g(x_3)$ be a function supported in $(-R, R)$. Define $\psi$ on $\p\GP_R$ by
\beq\label{extpro}
\psi(x)= \psi (\Gn(x_3) \cos\Gt, \Gn(x_3) \sin \Gt, x_3)= g(x_3).
\eeq
Thanks to \eqnref{dSpro2}, we have
\beq
\| \psi \|_{L^2(\p \GP_R)} \lesssim \| g \|_2.
\eeq
Additionally, if  $g(x_3)$ is supported in $(-R^{1-\Gs}, R^{1-\Gs})$ for some $\Gs \in (0, 1)$, then
we have
\beq\label{psig}
\| \psi \|_{L^2(\p \GP_R)} \sim \| g \|_2.
\eeq
Moreover, $\Kcal_{\p \GP_R}[\psi]$ can be expressed as
\beq\label{KcalHcal}
\Kcal_{\p \GP_R}[\psi](x) = \Hcal_R [g](x_3) ,
\eeq
where $\Hcal_R$ is the integral operator defined by the integral kernel $H_R(x_3, y_3)$ given by
\beq\label{H-kernel}
H_R(x_3,y_3) = \frac{1}{2\pi} \int_{0}^{\pi} \frac{ 1-R^{-2}x_3y_3- \Gn(x_3)\Gn(y_3) \cos\Gt}{ [\big( \Gn(x_3)^2+ \Gn(y_3)^2- 2\Gn(x_3)\Gn(y_3) \cos\Gt \big) + (x_3-y_3)^2]^{3/2} }\, d\Gt.
\eeq

If $x_3$ and $y_3$ lie in $(-R^{1-\Gs}, R^{1-\Gs})$, then $\Gn(x_3)$ and $\Gn(y_3)$ tend to $1$ as $R \to \infty$. Thus, formally speaking, $H_R(x_3,y_3)$ tends to $L_0(x_3-y_3)$, where
\beq\label{L0}
L_0(t) := \frac{1}{2\pi} \int_{0}^{\pi} \frac{1- \cos\Gt}{ [\big( 2- 2\cos\Gt \big) + t^2]^{3/2} } d\Gt .
\eeq
Let $\widehat{f}$ denote the Fourier transform on $\Rbb^d$, namely,
\beq\label{Fourdef}
\widehat{f}(\xi)=\Fcal[f](\xi):= \int_{\Rbb^d} e^{-2\pi i \xi \cdot x} f(x) dx.
\eeq
Note that
\beq
\widehat{L_0}(\xi) = \frac{1}{4\pi} \int_{0}^{\pi} \widehat{k}(\sqrt{2(1- \cos\Gt)} \xi) d\Gt,
\eeq
where
\beq\label{functionk}
k(t):= \frac{1}{(1+t^2)^{3/2}}.
\eeq

\begin{lemma}\label{lem:khat}
Let $k$ be the function defined by \eqnref{functionk}. Then, $\widehat{k}(\xi)$ is even, decreasing in $\xi \ge 0$, continuously differentiable on $\Rbb$, $0 < \widehat{k}(\xi) \le \widehat{k}(0)=2$, and
\beq\label{khatdecay}
|\widehat{k}(\xi)| \lesssim \frac{1}{1+|\xi|^N}
\eeq
for any positive integer $N$.
\end{lemma}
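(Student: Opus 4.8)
The plan is to obtain every property of $\widehat k$ from a single integral representation produced by subordination, and to handle the $C^1$ claim separately by differentiating once directly under the Fourier integral.

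The elementary facts come first: since $k(t)=(1+t^2)^{-3/2}$ is even and real, $\widehat k$ is even and real, and $\widehat k(0)=\int_{\Rbb}(1+t^2)^{-3/2}\,dt=\big[\,t(1+t^2)^{-1/2}\,\big]_{-\infty}^{\infty}=2$. For the remaining assertions I would insert the identity $(1+t^2)^{-3/2}=\Gamma(3/2)^{-1}\int_0^\infty u^{1/2}e^{-u(1+t^2)}\,du$ into the Fourier integral, apply Fubini's theorem, and use the Gaussian formula $\int_{\Rbb}e^{-2\pi i\xi t}e^{-ut^2}\,dt=\sqrt{\pi/u}\,e^{-\pi^2\xi^2/u}$. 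Since $\sqrt{\pi}/\Gamma(3/2)=2$, this yields
\beq\label{khat:subord}
\widehat k(\xi)=2\int_0^\infty e^{-u-\pi^2\xi^2/u}\,du,
\eeq
which is manifestly positive, and, for each fixed $u>0$, strictly decreasing in $|\xi|$; hence $\widehat k$ is strictly decreasing on $[0,\infty)$ and $0<\widehat k(\xi)\le\widehat k(0)=2$. (As a side check, \eqnref{khat:subord} at $\xi=0$ returns the value $2$ computed above.)

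For the $C^1$ regularity I would \emph{not} use \eqnref{khat:subord} but instead differentiate $\widehat k(\xi)=\int_{\Rbb}e^{-2\pi i\xi t}(1+t^2)^{-3/2}\,dt$ directly; this is legitimate because $|t|(1+t^2)^{-3/2}\in L^1(\Rbb)$ provides a $\xi$-independent dominating function for the differentiated integrand, and continuity of $\widehat k'$ then follows by dominated convergence. Only one differentiation is possible this way, since $t^2(1+t^2)^{-3/2}\notin L^1(\Rbb)$, which matches the fact that we claim no more than $C^1$. Finally, for the rapid decay \eqnref{khatdecay} I would rewrite the exponent in \eqnref{khat:subord} as $u+\pi^2\xi^2/u=(u-\pi|\xi|)^2/u+2\pi|\xi|$, so that $\widehat k(\xi)=2e^{-2\pi|\xi|}\int_0^\infty e^{-(u-\pi|\xi|)^2/u}\,du$; splitting the $u$-integral at $2\pi|\xi|$ and using $(u-\pi|\xi|)^2/u\ge(u-\pi|\xi|)^2/(2\pi|\xi|)$ on $(0,2\pi|\xi|)$ and $(u-\pi|\xi|)^2/u\ge u/4$ on $(2\pi|\xi|,\infty)$ gives $\int_0^\infty e^{-(u-\pi|\xi|)^2/u}\,du\lesssim 1+|\xi|^{1/2}$, whence $\widehat k(\xi)\lesssim (1+|\xi|^{1/2})e^{-2\pi|\xi|}$, which decays faster than any power of $|\xi|$.

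None of this is delicate. The only points that need any care are the justification of the Fubini interchange in deriving \eqnref{khat:subord} (routine, since the integrand is nonnegative) and the elementary splitting estimate behind the exponential decay; equivalently, one could recognize $\widehat k(\xi)$ as a constant multiple of $|\xi|K_1(2\pi|\xi|)$ and invoke the asymptotics of the modified Bessel function $K_1$, but the self-contained argument above avoids appealing to special-function estimates.
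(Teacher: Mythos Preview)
Your argument is correct, and it differs from the paper's in a pleasant way. The paper treats the decay and the monotonicity by two separate devices: for \eqnref{khatdecay} it integrates by parts $N$ times (using that every derivative $k^{(n)}$ is integrable), and for the strict decrease on $[0,\infty)$ it invokes the identification $\widehat{k}(\xi)=2\pi\xi\,K_1(2\pi\xi)$ together with the Bessel recurrence $(\xi K_1(\xi))'=-\xi K_0(\xi)$ and the positivity of $K_0$. Your subordination representation $\widehat{k}(\xi)=2\int_0^\infty e^{-u-\pi^2\xi^2/u}\,du$ packages both facts at once: positivity and monotonicity are read off pointwise in $u$, and the exponential decay falls out by completing the square in the exponent. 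The trade-off is that the paper's integration-by-parts route to \eqnref{khatdecay} is a one-line appeal to a standard principle, whereas yours requires the small splitting estimate on the $u$-integral; conversely, your monotonicity argument is entirely elementary, while the paper's leans on special-function identities. Your decision to prove $C^1$ by differentiating the original Fourier integral rather than the subordination formula is also the right call, since pushing $\partial_\xi$ through the $u$-integral produces a factor $u^{-1}$ whose behavior at $\xi=0$ would need separate justification.
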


\begin{proof}
Since
$$
\widehat{k}(\xi)= 2 \int_0^\infty \frac{\cos 2\pi \xi t} {(1+t^2)^{3/2}} dt,
$$
we see that $\widehat{k}$ is even, belongs to $C^1(\Rbb)$, and $\widehat{k}(0)=2$. If $|\xi| >1$, then integrations by parts show that $|\widehat{k}(\xi)| \lesssim |\xi|^{-N}$
for any positive integer $N$. Thus we have \eqnref{khatdecay}.

It now remains to prove that $\widehat{k}(\xi)$ is decreasing in $\xi \ge 0$. To prove it, we recall the relation
$$
\widehat{k}(\xi) = 2\pi \xi K_1(2\pi\xi),
$$
where $K_\nu$ denotes the modified Bessel function of the second kind (see \cite[10.32.11]{NIST}). Here $2\pi$ appears in the formula due to the definition \eqnref{Fourdef} of the Fourier transformation. Thanks to the recurrence relation $(\xi K_1(\xi))'=-\xi K_0(\xi)$ (\cite[10.29.2]{NIST}), we infer that $(\widehat{k})'(\xi)<0$ since $K_0(\xi) > 0$ if $\xi>0$. This completes the proof.
\end{proof}

\begin{lemma}\label{lem:Lhat}
$\widehat{L_0}$ is even, decreasing in $\xi \ge 0$, continuously differentiable on $\Rbb$, $0 < \widehat{L_0}(\xi) \le \widehat{L_0}(0)=1/2$, and for any $\Gd >0$
\beq\label{Lhatdecay}
|\widehat{L_0}(\xi)| \lesssim \frac{1}{1+|\xi|^{1-\Gd}}.
\eeq
\end{lemma}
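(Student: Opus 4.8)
The plan is to deduce all the claimed properties of $\widehat{L_0}$ from Lemma \ref{lem:khat} via the integral representation
$$
\widehat{L_0}(\xi) = \frac{1}{4\pi} \int_{0}^{\pi} \widehat{k}\bigl(\sqrt{2(1-\cos\Gt)}\,\xi\bigr)\, d\Gt .
$$
The evenness is immediate since $\widehat{k}$ is even. For the monotonicity: for each fixed $\Gt \in (0,\pi)$ the factor $c(\Gt):=\sqrt{2(1-\cos\Gt)} = 2|\sin(\Gt/2)|$ is positive, and since $\widehat{k}$ is decreasing on $[0,\infty)$, the map $\xi \mapsto \widehat{k}(c(\Gt)\xi)$ is decreasing on $[0,\infty)$; integrating in $\Gt$ preserves this, so $\widehat{L_0}$ is decreasing on $[0,\infty)$. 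For the value at $0$: $\widehat{k}(0)=2$ gives $\widehat{L_0}(0) = \frac{1}{4\pi}\int_0^\pi 2\, d\Gt = \frac12$, and positivity of $\widehat{k}$ together with positivity of the integrand for $\xi \ne 0$ gives $0 < \widehat{L_0}(\xi) \le \frac12$.

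Next I would justify that $\widehat{L_0} \in C^1(\Rbb)$ by differentiating under the integral sign. Formally,
$$
(\widehat{L_0})'(\xi) = \frac{1}{4\pi} \int_0^\pi c(\Gt)\, (\widehat{k})'\bigl(c(\Gt)\xi\bigr)\, d\Gt ,
$$
and one needs a dominating function independent of $\xi$ (locally in $\xi$) for the integrand. Since $(\widehat{k})'$ is continuous and bounded on $\Rbb$ (it is $C^0$ and decays at infinity by the recurrence $(\xi K_1(\xi))' = -\xi K_0(\xi)$ and the decay of $K_0$), and $c(\Gt) \le 2$, the integrand is bounded by a constant uniformly in $\xi$, so dominated convergence applies and $\widehat{L_0}$ is continuously differentiable.

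The one genuinely delicate point is the decay estimate \eqnref{Lhatdecay}: $|\widehat{L_0}(\xi)| \lesssim (1+|\xi|)^{-(1-\Gd)}$ for any $\Gd>0$. Here the loss from the naive estimate comes from the region $\Gt$ near $0$, where $c(\Gt) = 2\sin(\Gt/2) \sim \Gt$ is small, so that $\widehat{k}(c(\Gt)\xi)$ fails to decay. Using $|\widehat{k}(\eta)| \lesssim (1+|\eta|)^{-N}$, I would split the $\Gt$-integral at $\Gt \sim 1/|\xi|$ (assume $|\xi| \ge 1$). On $\Gt \gtrsim 1/|\xi|$ we have $c(\Gt)|\xi| \gtrsim \Gt|\xi| \gtrsim 1$, so $\widehat{k}(c(\Gt)\xi) \lesssim (\Gt|\xi|)^{-N}$, and $\int_{1/|\xi|}^{\pi}(\Gt|\xi|)^{-N}\,d\Gt \lesssim |\xi|^{-1}$ for $N \ge 2$. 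On $\Gt \lesssim 1/|\xi|$ we simply bound $\widehat{k} \le 2$, giving a contribution $\lesssim 1/|\xi|$. Thus in fact $|\widehat{L_0}(\xi)| \lesssim |\xi|^{-1}$, which is stronger than \eqnref{Lhatdecay}; the $\Gd$ is only needed because the problem statement phrases it conservatively (and presumably only $|\xi|^{-(1-\Gd)}$, or even just decay, is used later). I would therefore record the sharper bound $|\widehat{L_0}(\xi)| \lesssim (1+|\xi|)^{-1}$ and note that \eqnref{Lhatdecay} follows a fortiori. The main obstacle is precisely handling this $\Gt \to 0$ degeneracy cleanly; everything else is a routine transfer of properties from $\widehat{k}$ through a positive-kernel averaging in $\Gt$.
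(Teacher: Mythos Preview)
Your argument is correct and follows the same overall strategy as the paper: transfer the qualitative properties from Lemma~\ref{lem:khat} through the integral representation, then handle the decay by splitting the $\Gt$-integral near $\Gt=0$ where $c(\Gt)=2\sin(\Gt/2)$ degenerates.

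The one noteworthy difference is the splitting point for \eqnref{Lhatdecay}. The paper splits at $\Gt=|\xi|^{-1+\Gd}$: on the small-$\Gt$ piece it uses $0<\widehat{k}\le 2$ to get $\lesssim |\xi|^{-1+\Gd}$, and on the large-$\Gt$ piece it only uses $c(\Gt)|\xi|\gtrsim |\xi|^{\Gd}$ together with \eqnref{khatdecay} to get $\lesssim |\xi|^{-N\Gd}$, yielding exactly the stated $|\xi|^{-(1-\Gd)}$ bound. You instead split at $\Gt\sim 1/|\xi|$ and, on the large-$\Gt$ piece, exploit the full pointwise bound $\widehat{k}(c(\Gt)\xi)\lesssim (\Gt|\xi|)^{-N}$ and integrate it in $\Gt$, which indeed gives the sharper estimate $|\widehat{L_0}(\xi)|\lesssim |\xi|^{-1}$. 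Your observation that \eqnref{Lhatdecay} then follows a fortiori is correct, and your remark that only decay (not the precise rate) is used downstream is also accurate: the lemma is applied only to guarantee a unique $\xi_0$ with $\widehat{L_0}(\xi_0)=\Gl$ and to justify dominated convergence in Lemma~\ref{lem:limit-pro}. So your version is a mild sharpening of the paper's argument rather than a different route.
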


\begin{proof}
It follows from Lemma \ref{lem:khat} that  $\widehat{L_0}$ is even, decreasing in $\xi \ge 0$, continuously differentiable on $\Rbb$, and $0 < \widehat{L_0}(\xi) \le \widehat{L_0}(0)=1/2$. To prove \eqnref{Lhatdecay},
suppose that $|\xi| >1$ and write
$$
\widehat{L_0}(\xi) = \frac{1}{4 \pi} \left( \int_0^{|\xi|^{-1+\Gd}} + \int_{|\xi|^{-1+\Gd}}^\pi \right) \widehat{k}(\sqrt{2 (1-\cos\Gt)} \xi) d\Gt =: I_1(\xi) + I_2(\xi).
$$
Since $0 < \widehat{k} \le 2$, we have $|I_1(\xi)| \lesssim |\xi|^{-1+\Gd}$.
Since $|\sqrt{2 (1-\cos\Gt)} \xi| \gtrsim |\xi|^{\Gd}$ if $|\xi|^{-1+\Gd} \le \Gt \le \pi$,
it follows from \eqnref{khatdecay} that $|I_2(\xi)| \lesssim |\xi|^{-N}$ for any positive integer $N$.
\end{proof}

\subsection{The NP operator on prolate spheroids and the limiting operator}

In this subsection, we prove that the limiting operator (as $R \to 0$) of the NP operator on prolate spheroids is the convolution by $L_0$ given in \eqnref{L0} on some test functions. We begin by  constructing test functions $g_\Gr$ for parameter $\Gr>0$. Eventually, we take  $\Gr=R^{1-\Gs}$ for some $\Gs \in (0,1)$. By Lemma \ref{lem:Lhat}, for $\Gl \in (0, 1/2]$  there is a unique point
$\xi_0 \in [0, \infty)$ such that
\beq
\label{GLdef-pro}
\Gl - \widehat{L_0}(\xi_0) =0 .
\eeq
Let $\Gz_1$ be a function on $\Rbb$ such that $\widehat{\Gz_1}$ is a non-negative compactly supported smooth function
with
$$
\int_{\Rbb} \widehat{\Gz_1}(\xi)d\xi=1.
$$
Then, $\Gr \widehat{\Gz_1}(\Gr (\xi-\xi_0))$ converges weakly to $\Gd_{\xi_0}(\xi)$, the one-dimensional Dirac delta function, as $\Gr \to \infty$. Let $\chi_1$ be a smooth cut-off function such that $\mbox{supp}(\chi_1) \subset B(0,1)$ and $\chi_1=1$
on $B(0,1/2)$. Define
\beq\label{gRx}
g_\Gr(x):= \Gr^{-1/2} e^{2\pi i \xi_0 x} (\chi_1\Gz_1)(\Gr^{-1}x).
\eeq

In this section $\| \ \|_2$ denotes the $L^2$-norm on $\Rbb^1$.

\begin{lemma}\label{lem:limit-pro}
Let $\Gl \in (0, 1/2]$ and $g_\Gr$ be defined by \eqnref{gRx} with $\xi_0$ satisfying \eqnref{GLdef-pro}.
Then the following hold:
\begin{itemize}
\item[{\rm (i)}] $\| g_\rho \|_{2} \sim 1$.
\item[{\rm (ii)}] $\| \Gl g_\rho - L_0 *g_\rho \|_{2} \to 0$ as $\rho \to \infty$.
\end{itemize}
\end{lemma}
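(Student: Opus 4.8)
The plan is to verify the two items essentially by Fourier analysis on $\Rbb^1$, exploiting the fact that convolution by $L_0$ becomes multiplication by $\widehat{L_0}$ on the Fourier side and that $g_\Gr$ is, by construction, a rescaled bump concentrated in frequency near $\xi_0$. First, for item (i), observe that $\widehat{g_\Gr}(\xi) = \Gr^{1/2}\,\Fcal[(\chi_1\Gz_1)(\Gr^{-1}\cdot)](\xi-\xi_0) = \Gr^{-1/2}\,\widehat{\chi_1\Gz_1}(\Gr^{-1}(\xi-\xi_0))\cdot\Gr = \Gr^{1/2}\widehat{\chi_1\Gz_1}(\Gr^{-1}(\xi-\xi_0))$, up to the exact normalization of $\Fcal$; in any case, by Plancherel $\|g_\Gr\|_2 = \|\widehat{g_\Gr}\|_2$, and a change of variables $\xi = \xi_0 + \Gr\zeta$ turns this into $\|\widehat{\chi_1\Gz_1}\|_2$, a fixed positive constant independent of $\Gr$. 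So (i) holds with the implied constants being exactly $\|\widehat{\chi_1\Gz_1}\|_2$.

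For item (ii), again pass to the Fourier side: by Plancherel,
\beq
\| \Gl g_\Gr - L_0 * g_\Gr \|_2^2 = \int_\Rbb |\Gl - \widehat{L_0}(\xi)|^2 \, |\widehat{g_\Gr}(\xi)|^2 \, d\xi .
\eeq
Substituting $\xi = \xi_0 + \Gr^{-1}u$ (or $\xi_0 + \Gr \zeta$, matching the scaling of $\widehat{g_\Gr}$), the mass of $|\widehat{g_\Gr}(\xi)|^2\,d\xi$ concentrates at $\xi_0$ as $\Gr \to \infty$; concretely it equals $|\widehat{\chi_1\Gz_1}(\zeta)|^2\,d\zeta$ after rescaling, a fixed finite measure, evaluated against $|\Gl - \widehat{L_0}(\xi_0 + \Gr\zeta)|^2$. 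Since $\widehat{L_0}$ is continuous on $\Rbb$ (Lemma \ref{lem:Lhat}) and $\Gl = \widehat{L_0}(\xi_0)$ by the choice \eqnref{GLdef-pro}, the integrand $|\Gl - \widehat{L_0}(\xi_0 + \Gr\zeta)|^2$ tends to $0$ pointwise in $\zeta$ as $\Gr \to \infty$; moreover it is bounded by $(2\Gl + \Gl)^2 \le \tfrac{9}{4}$ uniformly (since $0 < \widehat{L_0} \le 1/2$), so dominated convergence gives the claim. One subtlety: $\widehat{\chi_1\Gz_1}$ need not be compactly supported, so one cannot simply localize to a bounded $\zeta$-window; however $\widehat{\chi_1\Gz_1} \in L^2$ provides the dominating function $\tfrac{9}{4}|\widehat{\chi_1\Gz_1}(\zeta)|^2 \in L^1$, which is exactly what dominated convergence needs, so no compact-support hypothesis on $\widehat{\chi_1\Gz_1}$ is used here.

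I expect the only genuine point requiring care — and hence the "main obstacle," though it is mild — is bookkeeping the relation between $\widehat{g_\Gr}$ and the rescaled $\widehat{\chi_1\Gz_1}$, including the role of the cutoff $\chi_1$: the product $\chi_1\Gz_1$ (rather than $\Gz_1$ alone) is used so that $g_\Gr$ has compact $x$-support, which matters when $g_\Gr$ is later transplanted onto $\p\GP_R$ via \eqnref{extpro}, but on the Fourier side it merely replaces $\widehat{\Gz_1}$ by the smooth, rapidly decaying function $\widehat{\chi_1}*\widehat{\Gz_1}$; since $\widehat{\Gz_1}$ was taken compactly supported with unit integral and $\widehat{\chi_1}(0)=\int\chi_1 \neq 0$ after normalization, $\widehat{\chi_1\Gz_1}$ is a fixed nonzero Schwartz-class function, which suffices for both the lower bound in (i) and the $L^1$-domination in (ii). Everything else is a routine application of Plancherel, the change of variables adapted to the $\Gr$-scaling, continuity of $\widehat{L_0}$ at $\xi_0$, and the dominated convergence theorem.
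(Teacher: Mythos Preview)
Your approach is exactly the paper's: Plancherel, rescale, use continuity of $\widehat{L_0}$ at $\xi_0$, then dominated convergence. However, your Fourier bookkeeping has the dilation inverted. With the convention \eqnref{Fourdef} one has $\Fcal\big[h(\Gr^{-1}\cdot)\big](\xi) = \Gr\,\widehat{h}(\Gr\xi)$, so
\[
\widehat{g_\Gr}(\xi) = \Gr^{1/2}\,\widehat{\chi_1\Gz_1}\big(\Gr(\xi-\xi_0)\big),
\]
with $\Gr(\xi-\xi_0)$, not $\Gr^{-1}(\xi-\xi_0)$, in the argument. The correct change of variables is therefore $\xi = \xi_0 + \Gr^{-1}\zeta$, which yields
\[
\|\Gl g_\Gr - L_0*g_\Gr\|_2^2 = \int_\Rbb \big|\Gl - \widehat{L_0}(\xi_0 + \Gr^{-1}\zeta)\big|^2\,\big|\widehat{\chi_1\Gz_1}(\zeta)\big|^2\,d\zeta,
\]
and now $\widehat{L_0}(\xi_0 + \Gr^{-1}\zeta) \to \widehat{L_0}(\xi_0) = \Gl$ pointwise by continuity, so dominated convergence applies exactly as you described. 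As you wrote it, with $\xi_0 + \Gr\zeta$ inside $\widehat{L_0}$, the integrand would instead converge to $|\Gl|^2 > 0$ for $\zeta \neq 0$ (since $\widehat{L_0}(\xi) \to 0$ at infinity by \eqnref{Lhatdecay}), and the argument would fail. Once this scaling is corrected, your proof and the paper's are identical.
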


\begin{proof} It is easy to see  $\| g_\rho \|_{2}=\|\chi_1\Gz_1\|_2 $, thus (i) follows. To show  (ii),
 we note that
$$
\mathcal F(\Gl g_\rho - L_0*g_\rho)=(\lambda- \widehat{L_0} (\xi)) \rho^{1/2} ( \widehat{\chi_1}\ast\widehat{\Gz_1}) (\Gr(\xi-\xi_0)).
$$
By Plancherel's theorem  and changing variables $\xi \to \rho^{-1}\xi+\xi_0$, we have
\[ \| \Gl g_\rho - L_0 *g_\rho \|_{2}^2= \int  |\lambda- \widehat{L_0} (\rho^{-1}\xi +\xi_0))|  ( \widehat{\chi_1}\ast\widehat{\Gz_1}) (\xi)|^2 d\xi. \]
Since $\lambda= \widehat{L_0} (\xi_0)$ and $\widehat{L_0}$ is continuous by Lemma \ref{lem:Lhat}, we obtain (ii) by the dominated convergence theorem.
\end{proof}

\begin{lemma}\label{lem:limit-pro2}
Let $\Hcal_{R}$ be the operator appearing in \eqnref{KcalHcal} and let $\Gr=R^{1-\Gs}$ for some $\Gs \in (0,1)$. Then,
\beq\label{2110}
 \|  \Hcal_R [g_\rho]- L_0\ast g_\rho \|_2\to 0,
\eeq
and hence
\beq
\label{L2}
\| \Gl g_\Gr  - \Hcal_{R}[g_\Gr] \|_{2} \to 0
\eeq
as $R \to \infty$.
\end{lemma}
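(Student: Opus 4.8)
\noindent\emph{Proof plan.}
Since $\Gr=R^{1-\Gs}\to\infty$ as $R\to\infty$, Lemma~\ref{lem:limit-pro}(ii) already gives $\|\Gl g_\Gr-L_0\ast g_\Gr\|_2\to0$, so \eqnref{L2} follows from \eqnref{2110} by the triangle inequality; thus the whole task is \eqnref{2110}. The plan is to show that, on the strip where $g_\Gr$ is supported, the (non-translation-invariant) kernel $H_R(x_3,y_3)$ of $\Hcal_R$ in \eqnref{H-kernel} is uniformly close, in an $L^1$ sense, to the convolution kernel $L_0(x_3-y_3)$ of \eqnref{L0}, and then to finish by Schur's test.

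First I would dispose of the part far from $\mathrm{supp}\,g_\Gr$. By Lemma~\ref{lem:limit-pro}(i), $\|g_\Gr\|_2\sim1$, so $\|g_\Gr\|_1\lesssim\Gr^{1/2}$. For $|x_3|\ge2\Gr$ one has $|x_3-y_3|\ge|x_3|/2$ whenever $|y_3|<\Gr$, and a crude estimate of the $\Gt$-integrals in \eqnref{H-kernel} and \eqnref{L0} (bounded numerators, denominators $\gtrsim|x_3-y_3|^3$) gives $|H_R(x_3,y_3)|,|L_0(x_3-y_3)|\lesssim|x_3|^{-3}$; since $\Hcal_R[g_\Gr]$ is taken to vanish for $|x_3|\ge R$, the contribution of $\{|x_3|\ge2\Gr\}$ to $\|\Hcal_R[g_\Gr]-L_0\ast g_\Gr\|_2$ is $\lesssim\|g_\Gr\|_1\Gr^{-5/2}\lesssim\Gr^{-2}\to0$. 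It then suffices, by Schur's test and $\|g_\Gr\|_2\sim1$, to prove
\beq\label{plan:schur}
\sup_{|x_3|\le2\Gr}\ \int_{|y_3|\le2\Gr}\big|H_R(x_3,y_3)-L_0(x_3-y_3)\big|\,dy_3\ \longrightarrow\ 0\qquad(R\to\infty),
\eeq
the column-sum version being identical since $H_R$ is symmetric and $L_0$ even.

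To prove \eqnref{plan:schur}, write $t=x_3-y_3$ and, for $|x_3|,|y_3|\le2\Gr=2R^{1-\Gs}$, set $\Gm=\Gn(x_3)\Gn(y_3)$, $e=(\Gn(x_3)-\Gn(y_3))^2$, $c_R=1-R^{-2}x_3y_3-\Gm$, and $s=1-\cos\Gt$. Using $\Gn(t)^2=1-t^2/R^2$ and $R^{-2}|x_3y_3|\lesssim R^{-2\Gs}$ one checks the three elementary facts
\beq\label{plan:elem}
0\le1-\Gm\lesssim R^{-2\Gs},\qquad 0\le c_R=\frac{t^2/R^2}{\,1-R^{-2}x_3y_3+\Gm\,}\lesssim R^{-2}t^2,\qquad 0\le e\lesssim R^{-2-2\Gs}t^2,
\eeq
the middle identity following from $(1-R^{-2}x_3y_3)^2-\Gm^2=t^2/R^2$ and the last from $\Gn(x_3)-\Gn(y_3)=-t(x_3+y_3)R^{-2}\big(\Gn(x_3)+\Gn(y_3)\big)^{-1}$. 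Now $H_R(x_3,y_3)$ and $L_0(t)$ are both averages $\frac1{2\pi}\int_0^\pi(\cdot)\,d\Gt$ of $F(\Gm',e',c';s,t):=(c'+\Gm's)\big(e'+2\Gm's+t^2\big)^{-3/2}$ taken at parameters $(\Gm,e,c_R)$ and $(1,0,0)$ respectively. A mean-value estimate of $F(\Gm,e,c_R;s,t)-F(1,0,0;s,t)$ in the parameters $(\Gm',e',c')$, in which every intermediate value keeps $\Gm'\ge1/2$ so that $e'+2\Gm's+t^2\gtrsim s+t^2$, combined with \eqnref{plan:elem}, bounds $|H_R(x_3,y_3)-L_0(x_3-y_3)|$ by $R^{-2\Gs}\ell(|t|)$ plus terms whose $y_3$-integral over $(-2\Gr,2\Gr)$ is $O(R^{-2}\log R)$, where $\ell(|t|)=\int_0^\pi s\big(s+t^2\big)^{-3/2}d\Gt\in L^1(\Rbb)$; here the factor $t^2$ carried by $c_R$ and $e$ in \eqnref{plan:elem} is precisely what tames the more singular $\big(s+t^2\big)^{-5/2}$ pieces produced by differentiation. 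Integrating in $y_3$ and using $\int_\Rbb\ell(|u|)\,du=\|\ell\|_{L^1(\Rbb)}<\infty$ gives a bound $\lesssim R^{-2\Gs}+R^{-2}\log R\to0$, which is \eqnref{plan:schur}; hence \eqnref{2110}, and then \eqnref{L2}.

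The main obstacle is this last step: because $H_R$ and $L_0$ are only logarithmically integrable along the diagonal $x_3=y_3$, the comparison of their $\Gt$-integrands has to be carried out with care near the corner $\Gt=0,\,t=0$, where a naive bound on $c_R$ or on $e$ is useless and one genuinely needs the refined estimates \eqnref{plan:elem} --- that these ``error parameters'' vanish at least like $t^2$, with small constants. It is also there that the scaling $\Gr=R^{1-\Gs}$, $\Gs\in(0,1)$, is forced: we need $R^{-2\Gs}\to0$ in the kernel estimate and, at the same time, $\Gr\to\infty$ so that Lemma~\ref{lem:limit-pro}(ii) applies.
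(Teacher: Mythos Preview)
Your argument is correct and rests on the same core observations as the paper's proof: the identity $c_R=(t^2/R^2)\big(1-R^{-2}x_3y_3+\Gm\big)^{-1}\lesssim R^{-2}t^2$ is exactly the paper's bound $|E_1|\lesssim R^{-2}|x_3-y_3|^2$; your $1-\Gm\lesssim R^{-2\Gs}$ is the coefficient in the paper's $E_2$; and your $e\lesssim R^{-2-2\Gs}t^2$ is a sharpening of the paper's estimate $|F(s,t)|\lesssim|s-t|^2$ (indeed $F(x_3/R,y_3/R)=e$). Both proofs then feed these into a mean-value comparison and finish by Young/Schur.

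Where you diverge is in organization. The paper first splits at $|x_3-y_3|=R^{\Gs/2}$ to peel off the large-$|t|$ region via the crude $|t|^{-3}$ bound, then on the near-diagonal piece decomposes the numerator as $E_1+E_2+E_3$, handles $E_1,E_2$ separately, and for the main term $E_3$ applies a mean-value estimate only to the denominators. You instead skip the $R^{\Gs/2}$ cut entirely and run a single mean-value estimate in the three parameters $(\Gm',e',c')$ of the unified integrand $F$. This is cleaner: the extra threshold is avoided because your $\ell(|t|)\sim 1+|\log|t||$ near $0$ and $\sim|t|^{-3}$ at infinity is genuinely in $L^1(\Rbb)$, and because the $t^2$-vanishing of $c_R$ and $e$ absorbs the $(s+t^2)^{-5/2}$ singularity produced by differentiating the denominator, leaving at worst an $R^{-2}\log R$ contribution after the $y_3$-integral over $(-2\Gr,2\Gr)$. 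The price is that a reader must check the parameter-derivative bounds (in particular that $|c'+\Gm's|\lesssim s+t^2$ along the segment, which holds since $c'\le c_R\le t^2$), whereas the paper's decomposition makes each piece explicit. Either way the analytic content is the same.
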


\begin{proof}
By Lemma \ref{lem:limit-pro} (ii), \eqref{L2} is an immediate consequence of \eqnref{2110}.

To prove \eqnref{2110}, we break the integral kernel $H_R$ of $\Hcal_R$ as follows:
\beq
H_R(x_3,y_3)=H_R^u(x_3,y_3)+H_R^l(x_3,y_3),
\eeq
where
\begin{align*}
H_R^u(x_3,y_3) &=H_R(x_3,y_3)  \chi_{[R^{\Gs/2},\infty)}(|x_3-y_3| ), \\
H_R^l(x_3,y_3) &=H_R(x_3,y_3)  \chi_{[0, R^{\Gs/2})}(|x_3-y_3| ).
\end{align*}
Here and afterwards, $\chi_{[R^{\Gs/2},\infty)}$ and $\chi_{[0, R^{\Gs/2})}$ denote the characteristic functions of the intervals $[R^{\Gs/2},\infty)$ and $[0, R^{\Gs/2})$, respectively.

We also make a similar decomposition for $L_0$, namely,
\[ L_0:=L_0^u+ L_0^l,\]
where $L_0^l(t)=L_0^l(t)\chi_{[0, R^{\Gs/2})}(|t| ) $.  For notational convenience we denote by $\Hcal_R^l,$ $\Hcal_R^u$, $\Lcal^l_0,$ and  $\Lcal^u_0$  the operators defined by the integral kernels  $H_R^l,$ $H_R^u$, $L^l_0(x_3-y_3),$ and  $L^u_0(x_3-y_3)$, respectively. We then have
\beq
\Hcal_R [g_\rho]- L_0\ast g_\rho = (\Hcal_R^l- \Lcal^l_0) [g_\rho] + (\Hcal_R^u- \Lcal^u_0) [g_\rho].
\eeq

The term $(\Hcal_R^u- \Lcal^u_0) [g_\rho]$ is easy to handle. Indeed,   by \eqref{H-kernel} and \eqref{L0} it follows that
\[
0\le H_R(x_3,y_3), \ L_0(x_3-y_3)\lesssim |x_3-y_3|^{-3}.
\]
Hence, we have
\begin{align*}
&\int_{-\infty}^\infty |H_R^u(x_3,y_3)| +| L^u_0(x_3-y_3)| dy_3 \lesssim R^{-\Gs}.
\end{align*}
Young's inequality yields $\| \Hcal_R^u\|_{2\to 2}\lesssim R^{-\Gs}$ and $\| \Lcal_0^u\|_{2\to 2}\lesssim R^{-\Gs}.$
Here $\|\cdot\|_{2\to 2}$ denotes the operator norm from $L^2$ to $L^2$. Therefore,
$$
\| (\Hcal_R^u- \Lcal^u_0)[g_\rho]\|_2 \lesssim R^{-\Gs}\| g_\rho\|_2 \lesssim R^{-\Gs},
$$
where the last inequality holds thanks to Lemma \ref{lem:limit-pro} (i).

The matter is now reduced to showing
\beq\label{HL}
\| (\Hcal_R^l- \Lcal^l_0)[g_\rho]\|_2  \to 0
\eeq
as $R\to \infty$.  In order to prove this we further break the operator $\Hcal_R^l$ by decomposing its kernel. By $D(x_3,y_3,\Gt)$ we denote the denominator of the integrand of \eqref{H-kernel}, namely,
\[
D(x_3,y_3,\Gt)= 2\pi [\big( \Gn(x_3)^2+ \Gn(y_3)^2- 2\Gn(x_3)\Gn(y_3) \cos\Gt \big) + (x_3-y_3)^2]^{3/2},
\]
and break the numerator so that
\[ 1-R^{-2}x_3y_3- \Gn(x_3)\Gn(y_3) \cos\Gt= E_1(x_3,y_3,\Gt)+E_2(x_3,y_3,\Gt)+E_3(x_3,y_3,\Gt),
\]
where
\begin{align*}
  E_1(x_3,y_3)&:= 1-R^{-2}x_3y_3- \Gn(x_3)\Gn(y_3),
   \\
   E_2(x_3,y_3, \Gt)&:=(\Gn(x_3)\Gn(y_3)-1)(1- \cos\Gt),
   \\
  E_3(x_3,y_3,\Gt)&:=(1- \cos\Gt).
  \end{align*}
We then define
\[
H_R^{l,j} (x_3,y_3) = \chi_{[0, R^{\Gs/2})}(|x_3-y_3| ) \int_0^\pi  \frac{ E_j(x_3,y_3,\Gt)}{D(x_3,y_3,\Gt)}  d\Gt, \quad j=1,2,3,
\]
so that
\[
H_R^{l}=H_R^{l,1}+H_R^{l,2}+H_R^{l,3}.
\]
As before, we denote by $\Hcal_R^{l,j}$ the operator given  by the kernel $H_R^{l,j}$ for  $j=1, 2,3$.  For the proof of \eqref{HL},  we show the contributions from the operators $\Hcal_R^{l,2}$ and $\Hcal_R^{l,3}$ are negligible.
This reduces  \eqref{HL} to \eqref{Hl} below.

If $y_3$ lies in the support of $g_\Gr$, namely, $|y_3|\le R^{1-\Gs}$ and if $|x_3-y_3|\le R^{\Gs/2}$, then $|x_3| \lesssim R^{1-\Gs}$ if $0< \Gs \leq 1/2$. Thus, in order to show \eqref{HL}  we may assume
\beq\label{x3y3}
|y_3|\le R^{1-\Gs} \quad \mbox{and} \quad |x_3| \lesssim R^{1-\Gs}
\eeq
for the rest of this proof.
On the other hand, since
$$
\Gn(x_3)^2+ \Gn(y_3)^2- 2\Gn(x_3)\Gn(y_3) \cos\Gt =( \Gn(x_3)-\Gn(y_3))^2+  2\Gn(x_3)\Gn(y_3)(1- \cos\Gt),
$$
we have
\beq
D(x_3,y_3,\Gt)\ge C \big [(1-\cos\Gt)+ (x_3-y_3)^2\big]^{3/2}
\eeq
for some constant $C>0$.

Because of \eqnref{x3y3}, assuming $R$ is large enough,  one can easily   see that $
|E_1(x_3,y_3, \Gt)| \lesssim R^{-2}|x_3-y_3|^2.
$
Thus, we have
\[
|H_R^{l,1} (x_3,y_3)| \lesssim \frac{\chi_{[0, R^{\Gs/2})}(|x_3-y_3| )}{R^2}  \int_0^\pi  \frac{ |x_3-y_3|^2}{[ \Gt^2+ (x_3-y_3)^2]^{3/2}}  d\Gt
\lesssim \frac{\chi_{[0, R^{\Gs/2})}(|x_3-y_3| )}{R^2} ,
\]
which clearly yields
$$
\int_{-\infty}^\infty |H_R^{l,1} (x_3,y_3)|  dy_3 \lesssim R^{-2+\Gs/2}.
$$
Therefore, by symmetry and Young's inequality as before, we have
$$
\|\Hcal_R^{l,1} [g_\rho] \|_2 \lesssim  R^{-2+\Gs/2} .
$$

Thanks to \eqnref{x3y3}  it is easy to see $
|E_2(x_3,y_3, \Gt)| \lesssim R^{-2\Gs} (1- \cos\Gt)
$. So, we have
\begin{align*}
  |H_R^{l,2} (x_3,y_3)|
        &\lesssim R^{-2\Gs}\chi_{[0, R^{\Gs/2})}(|x_3-y_3| ) \int_0^\pi  \frac{ \Gt^2}{[\Gt^2+ (x_3-y_3)^2]^{3/2}}  d\Gt
        \\
        &\lesssim R^{-2\Gs}\chi_{[0, R^{\Gs/2})}(|x_3-y_3| ) (1+ |\log |x_3-y_3||).
\end{align*}
Thus, it follows that
$$
\int_{-\infty}^\infty |H_R^{l,2} (x_3,y_3)|  dy_3 \lesssim R^{-3\Gs/2}\log R.
$$
By symmetry and Young's inequality  this gives
$$
\|\Hcal_R^{l,2} [g_\rho] \|_2 \lesssim  R^{-3\Gs/2}\log R.
$$

The proof of \eqref{Hl} is now reduced to showing
\beq
\label{Hl}
\| (\Hcal_R^{l,3}- \Lcal^l_0)[g_\rho] \|_2  \to 0
\eeq
as $R\to \infty$. To prove this \eqref{Hl}, we first note
\beq\label{HLL}
|H_R^{l,3}(x_3,y_3)- L^l_0(x_3-y_3)| \lesssim  \chi_{[0, R^{\Gs/2})}(|x_3-y_3| ) \int_0^\pi (1-\cos\Gt) |K_\Gt(x_3,y_3)| d\Gt,
 \eeq
where
\begin{align*}
K_\Gt(x_3,y_3) &=\Big ( \Gn(x_3)^2+ \Gn(y_3)^2- 2\Gn(x_3)\Gn(y_3) \cos\Gt + (x_3-y_3)^2\Big) ^{-\frac 32} \\
& \qquad  - \Big ( 2-2\cos\Gt + (x_3-y_3)^2\Big) ^{-\frac 32}.
\end{align*}
Also, note that
\begin{align*}
&\Gn(x_3)^2+ \Gn(y_3)^2- 2\Gn(x_3)\Gn(y_3) \cos\Gt-  (2- 2\cos\Gt) \\
&=F( \frac{x_3}{R}, \frac{y_3}{R}) +  2(  \Gn(x_3)\Gn(y_3)-1) (1-\cos\Gt),
\end{align*}
where
\[
F(s,t):=2-s^2- t^2 -2\sqrt{(1- s^2)(1-t^2)}\,.
\]

If $R$ is sufficiently large, then
$
\Gn(x_3)^2+ \Gn(y_3)^2- 2\Gn(x_3)\Gn(y_3) \cos\Gt \ge 1-\cos\Gt .
$
Thus, by the mean value theorem, we have
\begin{align*}
|K_\Gt(x_3,y_3)|&\lesssim \frac{ | F( \frac{x_3}{R}, \frac{y_3}{R})|  +  |(  \Gn(x_3)\Gn(y_3)-1) (1-\cos\Gt)  | }{\big(1-\cos\Gt + (x_3-y_3)^2\big)^{5/2}}.
\end{align*}
Since $F$ is smooth on  $[-1/2, 1/2]\times [-1/2, 1/2]$, $\partial_s F(s,s)=0$ and $F(s,s)=0$, we have
$$
|F(s,t)|\le C|s-t|^2
$$
for  $s,t\in (-1/2, 1/2)$.  We then infer using  \eqnref{x3y3} that
\[
|K_\Gt(x_3,y_3)| \le C \frac{ R^{-2}| x_3-y_3 |^2 +  R^{-2\Gs} (1-\cos\Gt) }{\big(2-2\cos\Gt + (x_3-y_3)^2\big)^{5/2}}.
\]
Combining this with \eqref{HLL} yields
\begin{align*}
|H_R^{l,3}(x_3,y_3)- L^l_0(x_3-y_3)|
&\le C   \chi_{[0, R^{\Gs/2})}(|x_3-y_3| ) \int_0^\pi  \frac{ R^{-2}| x_3-y_3|^2\Gt^2 +  R^{-2\Gs} \Gt^4 }{\big(\Gt^2 + (x_3-y_3)^2\big)^{5/2}}d\Gt
\\
&\le C  \chi_{[0, R^{\Gs/2})}(|x_3-y_3| ) (R^{-2} +  R^{-2\Gs}(1+|\log |x_3-y_3||),
\end{align*}
from which it follows that
$$
\int |H_R^{l,3}(x_3,y_3)- L^l_0(x_3-y_3)| dy_3 \lesssim R^{-3\Gs/2}\log R.
$$
Because of symmetry, the integration with respect to $x_3$ satisfies the same inequality.  Therefore,  by Young's  inequality,
we infer
$$
\| (\Hcal_R^{l,3}- \Lcal^l_0)[g_\rho] \|_2  \lesssim R^{-3\Gs/2}\log R,
$$
which yields \eqref{Hl}.  This completes the proof.
\end{proof}

\subsection{Proof of Proposition \ref{prop:prolate}}\label{subsec:proofpro}

Let $\Gl \in (0, 1/2]$ and  $g_\Gr$ be defined  by \eqnref{gRx} where $\xi_0$ satisfies  \eqref{GLdef-pro}. Then, we  define the function $\psi_\Gr$ on $\p \GP_R$  by  \eqnref{extpro} with $g=g_\rho$.
Applying \eqnref{psig},
\eqnref{KcalHcal},  Lemma \ref{lem:limit-pro},  and Lemma \ref{lem:limit-pro2}, we see
\beq\label{limit-pro}
\lim_{R \to \infty} \frac{\| ( \Gl I - \Kcal_{\p\GP_R}) [\psi_\Gr] \|_{L^2(\p \GP_R)}}{\| \psi_\Gr \|_{L^2(\p \GP_R)}} =0.
\eeq

Let $R_j$ be a sequence such that $R_j \to \infty$ as $j \to \infty$. Suppose  $\Gl \notin \ol{\cup_{j=1}^\infty \Gs(\Kcal_{\p\GP_{R_j}})}$, then there is an $\Ge>0$ such that $[\Gl -\Ge, \Gl+\Ge] \cap \Gs(\Kcal_{\p\GP_{R_j}}) = \emptyset$ for all $j$. Therefore, there is a constant $C$ independent of $j$ such that
$$
C \| \Gvf \|_{L^2(\p\GP_{R_j})} \le \| ( \Gl I - \Kcal_{\p\GP_{R_j}}) [\Gvf] \|_{L^2(\p\GP_{R_j})}, \quad  \forall \Gvf \in L^2(\p\GP_{R_j})
$$
for all  $j$. This contradicts \eqnref{limit-pro}. Thus, we conclude  $\Gl \in \ol{\cup_{j=1}^\infty \Gs(\Kcal_{\p\GO_{R_j}})}$. This completes the proof.

\subsection{Proof of Theorem \ref{thm:prolate2}}

The (confocal) prolate spheroids can be canonically described in terms of the prolate spheroidal coordinates, which are given by
\begin{align*}
x_1 &=\frac{d}{2} [(\xi^2 -1)(1- \eta ^2)]^{1/2} \cos \phi,  \\
x_2 &=\frac{d}{2} [(\xi^2 -1)(1- \eta ^2)]^{1/2} \sin \phi,  \\
x_2 &= \frac{d}{2} \xi \eta,
\end{align*}
where $d$ is the distance between two foci, $(0,0,d/2)$ and $(0,0,-d/2)$, $1 \leq \xi <\infty$, $-1\leq \eta \leq 1$, and $\phi$ is the azimuthal angle lying in $[0, 2 \pi]$. The surfaces $\xi=L$ (constant) represent (confocal) prolate spheroids. The spheroid $\p\GP_R$ is $\xi=L$ with $R= L/\sqrt{L^2-1}$ after dilation by the factor of $(d/2)^2 (L^2-1)$. Let us  recall  that the NP spectrum is invariant under dilation. The limiting case $\xi=1$ is a degenerate case corresponding to the line segment between the foci, and $L=\infty$ is the sphere.

It was shown in \cite{AA} that NP eigenvalues on the surface $\xi =L>1$, or on $\p\GP_R$ with $R= L/\sqrt{L^2-1}$, are positive and given by  an explicit formula
\beq\label{ev:prolate}
\Gl_{m, n}(L) = \left( -\frac{1}{2} \right) (-1)^m \frac{(n-m)!}{(n+m)!} (L^2 -1)(P_n^m Q_n^m)' (L)
\eeq
for $n=1,2, \ldots$ and $m=-n, -n+1, \ldots, -1, 0, 1, \ldots, n$, where $P_n^m(L)$ and $Q_n^m(L)$ denote associated Legendre functions of the first kind and the second kind, respectively. On the surface $L=\infty$, namely,  the sphere,   we have
\beq\label{Glinfty}
\Gl_{m, n}(\infty)= \frac{1}{2(2n+1)}, \quad m=-n, -n+1, \ldots, -1, 0, 1, \ldots, n.
\eeq
Moreover, it is shown  in \cite{Mart} that $\Gl_{m, n}(L)$ enjoys the 1/2-property
\beq\label{a half property}
\sum_{m=-n}^n \Gl_{m, n}(L)=1/2 \quad\mbox{for all } n.
\eeq
It is worth mentioning that it  has not been known whether  NP eigenvalues on general surfaces do or do not satisfy the 1/2-property. We refer to \cite{AKMU} for a discussion on this.

We obtain the following proposition from (ii) of which Theorem \ref{thm:prolate2} immediately follows by taking $R_0= L_0/\sqrt{L_0^2-1}$. Note that (i) of the following proposition shows tunability of the eigenvalues by prolate spheroids, namely, for any $n$ and $\Gl \in (0, 1/2)$ there is a prolate spheroid characterized by $\xi=L$ such that $\Gl_{m, n}(L)=\Gl$ for some $m$. When $n=1$, it was proved in \cite{FK}.

\begin{prop}
\begin{itemize}
\item[{\rm (i)}] $\ds \cup_{L>1} \{\Gl_{m, n}(L): -n \le m \le n\} = (0, 1/2)$ for each $n=1,2, \ldots$.
\item[{\rm (ii)}] For any $L_0 >1$, $\ds \cup_{1<L\le L_0} \{\Gl_{0, n}(L): n=1,2,\ldots \} = (0, 1/2)$.
\end{itemize}
\end{prop}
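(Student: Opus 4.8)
The plan is to read the two asserted sets directly off the closed form \eqnref{ev:prolate}, by an intermediate value argument in the spheroidal parameter $L$, using only two structural inputs: positivity of the eigenvalues $\Gl_{m,n}(L)$ for $L>1$ (\cite{AA}) and the $1/2$-property \eqnref{a half property} (\cite{Mart}). Since $P_n^m$ and $Q_n^m$ are smooth on $(1,\infty)$, each map $L\mapsto\Gl_{m,n}(L)$ is continuous there; and by \eqnref{a half property} the $2n+1\ (\ge 3)$ positive numbers $\Gl_{-n,n}(L),\dots,\Gl_{n,n}(L)$ add up to $1/2$, so each of them lies in $(0,1/2)$. Together with $\Gl_{m,n}(\infty)=\tfrac1{2(2n+1)}\in(0,1/2)$ for the degenerate (spherical) member (cf.\ \eqnref{Glinfty}), this already gives the inclusions ``$\subseteq$'' in both (i) and (ii); it remains to prove that the indicated values are attained.

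Everything then hinges on two limits. As $L\to\infty$ the surface $\xi=L$ collapses to the sphere and $\Gl_{m,n}(L)\to\Gl_{m,n}(\infty)=\tfrac1{2(2n+1)}$, which follows routinely from the large-argument asymptotics of $P_n^m$ and $Q_n^m$. As $L\to1^+$ one has $\Gl_{0,n}(L)\to\tfrac12$ while $\Gl_{m,n}(L)\to0$ for $1\le|m|\le n$. For $m=0$, write $Q_n=\tfrac12 P_n\log\tfrac{x+1}{x-1}-W_{n-1}$ with $W_{n-1}$ a polynomial, differentiate $P_nQ_n$, and observe that after multiplying by $(L^2-1)$ only the term $-P_n(L)^2\to-1$ survives in the limit (the logarithmic term dies because $(L^2-1)\log\tfrac{L+1}{L-1}\to0$); hence $\Gl_{0,n}(L)=-\tfrac12(L^2-1)(P_nQ_n)'(L)\to\tfrac12$. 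For $1\le|m|\le n$, expanding $Q_n^m$ near the branch point $x=1$ one checks that $(P_n^mQ_n^m)'(L)$ blows up at most like $\log(L-1)$ as $L\to1^+$, so $(L^2-1)(P_n^mQ_n^m)'(L)\to0$ and $\Gl_{m,n}(L)\to0$; these limits are of course consistent with \eqnref{a half property}. I expect this near-branch-point analysis — particularly the logarithmic derivative bound for $|m|\ge1$ — to be the main technical obstacle. Granting the two limits, (i) is immediate from the intermediate value theorem: the continuous function $\Gl_{0,n}$ on $(1,\infty)$, with endpoint limits $\tfrac1{2(2n+1)}$ (as $L\to\infty$) and $\tfrac12$ (as $L\to1^+$), attains every value in $(\tfrac1{2(2n+1)},\tfrac12)$; any $\Gl_{m,n}$ with $1\le m\le n$, with endpoint limits $0$ and $\tfrac1{2(2n+1)}$, attains every value in $(0,\tfrac1{2(2n+1)})$; and the value $\tfrac1{2(2n+1)}$ is realized by the spherical ($L=\infty$) member. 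With ``$\subseteq$'' this proves (i).

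For (ii), fix $L_0>1$. For each $n$ the continuous function $\Gl_{0,n}$ on $(1,L_0]$ satisfies $\Gl_{0,n}(L)\to\tfrac12$ as $L\to1^+$ and $\Gl_{0,n}(L_0)<\tfrac12$, so its range is an interval whose supremum is $\tfrac12$ (not attained) and which contains $\Gl_{0,n}(L_0)$; hence that range contains $[\Gl_{0,n}(L_0),\tfrac12)$, and therefore
\[
\bigcup_{n\ge1}\{\Gl_{0,n}(L):1<L\le L_0\}\ \supseteq\ \bigcup_{n\ge1}\bigl[\Gl_{0,n}(L_0),\tfrac12\bigr).
\]
Now $\p\GP_{R}$ with $R=L_0/\sqrt{L_0^{2}-1}$ is a $C^\infty$ surface, so $\Kcal_{\p\GP_R}$ is compact and its eigenvalues accumulate only at $0$; the numbers $\Gl_{0,n}(L_0)$, $n\ge1$, are among them, with linearly independent eigenfunctions for distinct $n$, so $\Gl_{0,n}(L_0)\to0$ as $n\to\infty$. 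Hence the right-hand side above equals $(0,\tfrac12)$, which together with ``$\subseteq$'' yields (ii); Theorem \ref{thm:prolate2} then follows by taking $R_0=L_0/\sqrt{L_0^{2}-1}$.
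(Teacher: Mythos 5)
Your overall strategy coincides with the paper's: read everything off the closed formula \eqnref{ev:prolate}, use continuity of $L\mapsto\Gl_{m,n}(L)$ on $(1,\infty)$, compute the limits as $L\to1^+$ and $L\to\infty$, and conclude by the intermediate value theorem; for (ii), combine $\Gl_{0,n}(L)\to\tfrac12$ as $L\to1^+$ with the decay $\Gl_{0,n}(L_0)\to0$ as $n\to\infty$ (which the paper asserts tersely and you justify, correctly, via compactness of $\Kcal_{\p\GP_R}$ on the smooth surface) and take the union over $n$; that part of your argument is sound and matches the paper. The one genuinely different sub-route is your treatment of the limits $\Gl_{m,n}(L)\to0$, $m\neq0$, as $L\to1^+$: you propose a direct expansion of $Q_n^m$ at the branch point to get $(P_n^mQ_n^m)'(L)=O(|\log(L-1)|)$, and you flag this as the main technical obstacle. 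The paper avoids it entirely: since all $2n+1$ eigenvalues are positive \cite{AA} and sum to $\tfrac12$ by \eqnref{a half property}, the limit $\Gl_{0,n}(L)\to\tfrac12$ already forces $\Gl_{m,n}(L)\to0$ for every $m\neq0$. You have both ingredients in hand (you use them for the inclusion ``$\subseteq$''), so the branch-point analysis --- the only step you leave unverified, though its conclusion is correct --- can simply be deleted.

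There is, however, a genuine gap in your proof of (i) at the single value $\tfrac1{2(2n+1)}$. Your intermediate value argument yields only $(0,\tfrac1{2(2n+1)})\cup(\tfrac1{2(2n+1)},\tfrac12)$, and you then claim the missing point is ``realized by the spherical ($L=\infty$) member''. But the union in (i) runs over finite $L>1$; the sphere is not one of the admissible surfaces, so this is a non sequitur --- knowing the limit at $L=\infty$ does not give attainment at any finite $L$. Moreover, for $n=1$ the step cannot be repaired: from \eqnref{ev:prolate}, $\Gl_{0,1}(L)=\tfrac{2L^2-1}{2}-\tfrac{L(L^2-1)}{2}\log\tfrac{L+1}{L-1}=\tfrac16+\tfrac{2}{15L^2}+O(L^{-4})$, and in fact $\Gl_{0,1}(L)>\tfrac16>\Gl_{\pm1,1}(L)$ for every finite $L>1$ (the $n=1$ eigenvalues are $\tfrac12$ minus the depolarization factors of the spheroid, and the long-axis factor is strictly below $\tfrac13$; cf. \cite{FK}), so the value $\tfrac16$ is never attained over $L>1$. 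To be fair, the paper's own proof asserts the half-closed inclusions $(0,\tfrac1{2(2n+1)}]$ and $[\tfrac1{2(2n+1)},\tfrac12)$ without justifying the closed endpoint, so your difficulty actually exposes a weak spot of statement (i) itself rather than something the paper settles; but as written your justification of that point is invalid. Part (ii) --- the only part used for Theorem \ref{thm:prolate2} --- is unaffected, since there the union over $n$ of the intervals $[\Gl_{0,n}(L_0),\tfrac12)$ covers $(0,\tfrac12)$ with no exceptional interior point.
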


\begin{proof}
From Rodrigues' formulas  which is also known as the Ivory--Jacobi formula (also see, e.g., \cite{AS})   we have the following:
\begin{align*}
P_n^0(z)&=:P_n(z) =\frac{1}{2^n n!} \frac{d^n (z^2-1)^n}{dz^n},
\\
Q_n^0(z)&=:Q_n(z) =\frac{1}{2} P_n(z) \log \frac{1+z}{1-z} - \sum_{m=1} ^n \frac{1}{m} P_{m-1}(z) P_{n-m} (z),
\\
P_n (z) Q_n (z) &=
\frac{1}{2} (P_n(z))^2  \log \frac{1+z}{1-z}
- P_n(z)\sum_{m=1} ^n \frac{1}{m} P_{m-1}(z) P_{n-m} (z) .
\end{align*}
Since $P_k$ is a polynomial (it is of degree $k$) for each $k$ and $P_n(1)=1$, we have
$$
(P_n Q_n)'(L)= \frac{1}{2(1-L)} + O(|\log(L-1)|)
$$
as $L \to 1+0$. We then infer from \eqref{ev:prolate} that $\Gl_{0, n}(L) \rightarrow \frac{1}{2}$ as $L \rightarrow 1+0$.

By \eqref{a half property}, we see that $\Gl_{m, n}(L) \to 0$ as $L \rightarrow 1$ if $m \neq 0$. Since each $\Gl_{m, n}(L)$ is continuous in $L$ on $(1, \infty)$,  by  \eqnref{Glinfty}  we have
$$
\bigcup_{L>1} \{\Gl_{m, n}(L): m \neq 0 \} \supseteq \left(0, \frac{1}{2(2n+1)} \right], \quad
\bigcup_{L>1} \{\Gl_{0, n}(L) \} \supseteq \left[\frac{1}{2(2n+1)},\frac{1}{2} \right)
$$
for each $n$.  Thus, (i) follows.

Since $\Gl_{m, n}(L) \rightarrow 0$ as $n \to \infty$ for each fixed $L$, we have $\Gl_{0, n}(L) \rightarrow 0$ as a subsequence, and hence (ii) follows.
\end{proof}

\section{Proof of Theorem \ref{thm:oblate}}\label{sec:3}

In this section we prove Theorem \ref{thm:oblate}. Again it is enough to consider the spectrum of $\Kcal_{\p \GO_{R}}$ on $L^2(\p\GO_R)$ since $\p\GO_R$ is smooth.

\subsection{Parametrization of the NP operator on oblate ellipsoids}

In this sections and those to follow, we use $X$ to represent points in $\Rbb^3$ saving $x$ for points in the plane, that is to say, $X=(x,x_3)=(x_1,x_2,x_3)$. Let $D_R$ be the projection of $\GO_R$ onto $x$-plane, namely,
$$
D_R:=\left \{ x=(x_1,x_2): \frac{x_1^2}{(a_1 R)^2} + \frac{x_2^2}{(a_2 R)^2} \le 1 \right \},
$$
and let
\beq
\Gg(x)=\Gg_R(x):=  \sqrt{1- \frac{x_1^2}{(a_1R)^2} - \frac{x_2^2}{(a_2R)^2}}, \quad x \in D_R.
\eeq
Then, $\p\GO_R$ consists of two pieces, namely, $\p\GO_R = \GG^+ \cup \GG^-$,
where
\beq\label{GOboundary}
\GG^{\pm} = \{ (x,\pm \Gg(x)): x \in D_R \},
\eeq
and the NP operator can be written as
$$
\Kcal_{\p\GO_{R}}[\Gvf](X)= \Big(\int_{\GG^+} + \int_{\GG^-}\Big) \frac{\la Y-X, \nu(Y) \ra}{4\pi |X-Y|^3} \Gvf(Y) dS(Y).
$$

Now, let us set
\begin{align}
\label{K1def}
K_R^1(x,y)= -\frac{1}{4\pi} \frac{\frac{1}{R^2\Gg(y)} \sum_{j=1}^2 (x_j-y_j) \frac{y_j}{a_j^2} + (\Gg(x) - \Gg(y)) } { \big[ |x-y|^2 + (\Gg(x) - \Gg(y))^2 \big]^{3/2} },
\\[4pt]
\label{K2def}
K_R^2(x,y)= -\frac{1}{4\pi} \frac{\frac{1}{R^2\Gg(y)} \sum_{j=1}^2 (x_j-y_j) \frac{y_j}{a_j^2} - (\Gg(x) + \Gg(y)) } { \big[ |x-y|^2 + (\Gg(x) +\Gg(y))^2 \big]^{3/2} }.
\end{align}
For $Y=(y,y_3) \in \GG^\pm$,   we have
$$
\nu(Y) dS(Y)= \Big( \frac{y_1}{(a_1R)^2 \Gg(y)}, \frac{y_2}{(a_2R)^2 \Gg(y)}, \pm 1 \Big) dy.
$$
Therefore, for $(x, \Gg(x)) \in \GG^+$ we  obtain
\begin{align}
\Kcal_{\p\GO_{R}}[\Gvf](x, \Gg(x)) &= \int_{D_R} K_R^{1}(x,y) \Gvf^+(y) dy + \int_{D_R} K_R^{2}(x,y) \Gvf^-(y) dy \nonumber \\
&=: \Kcal_{R}^{1}[\Gvf^+](x) + \Kcal_{R}^{2}[\Gvf^-](x), \label{upperform}
\end{align}
where
$$
\Gvf^+(y):= \Gvf(y, \Gg(y)), \quad \Gvf^-(y):= \Gvf(y, -\Gg(y)).
$$
Similarly, one can easily  see
\beq\label{lowerform}
\Kcal_{\p\GO_{R}}[\Gvf](x, -\Gg(x)) = \Kcal_{R}^{2}[\Gvf^+](x) + \Kcal_{R}^{1}[\Gvf^-](x),  \quad (x, -\Gg(x)) \in \GG^-.
\eeq

The surface measure on $\GG^\pm$ is given by
\beq
dS(x)= \Go_R(x) dx,
\eeq
where
\beq
\Go_R(x)= \frac{\chi_{D_R}(x)}{\Gg(x)} \sqrt{\frac{x_1^2}{(a_1R)^4} + \frac{x_2^2}{(a_2R)^4} + \Gg(x)^2} \, .
\eeq
For a measurable subset $U$ of $D_R$, we set
\beq
\Go_R(U) := \int_U \Go_R(x) dx.
\eeq
The following elementary lemma will be used later.

\begin{lemma}\label{lem:ARBR}
Let
\beq
A_R:= \Big\{ x \in D_R: \frac{x_1^2}{(a_1R)^4} + \frac{x_2^2}{(a_2R)^4} \le \Gg(x)^2 \Big\} , \quad B_R:= D_R \setminus A_R.
\eeq
The following hold:
\begin{itemize}
\item[{\rm (i)}] $\Go_R \ge 1$ on $D_R$ and $\Go_R \sim 1$ on $A_R$.
\item[{\rm (ii)}] $\Go_R(A_R) \sim R^2$.
\item[{\rm (iii)}] $\Go_R(B_R) \sim 1$.
\end{itemize}
\end{lemma}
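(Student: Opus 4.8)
The plan is to pass to the rescaled coordinates $u=(u_1,u_2)$, $u_j:=x_j/(a_j R)$, which map $D_R$ onto the unit disk $\{|u|\le 1\}$ with $dx=a_1a_2R^2\,du$. In these coordinates one has $\Gg(x)^2=1-|u|^2$ and $\frac{x_1^2}{(a_1R)^4}+\frac{x_2^2}{(a_2R)^4}=|u|^2/R^2$, so that
\[
\Go_R(x)^2=1+\frac{|u|^2/R^2}{1-|u|^2}=\frac{1-|u|^2+|u|^2/R^2}{1-|u|^2},\qquad |u|<1,
\]
while the defining inequality for $A_R$ becomes $|u|^2/R^2\le 1-|u|^2$, i.e. $|u|\le r_0:=R/\sqrt{R^2+1}$. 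Thus $A_R$ corresponds to the disk $\{|u|\le r_0\}$ and $B_R$ to the annulus $\{r_0<|u|\le 1\}$, with $1-r_0^2=1/(R^2+1)$. Since only large $R$ is relevant, we assume $R\ge 1$; then $r_0^2\in[1/2,1)$ and $1-r_0^2\sim R^{-2}$, so all $\sim$-constants below are uniform in $R$.

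Part (i) is immediate from the displayed identity: $\Go_R(x)^2\ge 1$ on all of $D_R$, while on $A_R$ the quotient $\frac{|u|^2/R^2}{1-|u|^2}$ is $\le 1$, whence $1\le\Go_R\le\sqrt 2$ there. For part (ii) I combine (i) with the area computation $|A_R|=a_1a_2R^2\cdot\pi r_0^2\sim R^2$ to conclude $\Go_R(A_R)\sim|A_R|\sim R^2$.

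Part (iii) is the step that deserves a little care, because $\Go_R$ is unbounded near $\p D_R$, so an $L^\infty$ estimate on $B_R$ is useless and one must integrate against the integrable singularity $1/\sqrt{1-|u|^2}$. On $B_R$ one has $r_0^2/R^2\le 1-|u|^2+|u|^2/R^2\le(1-r_0^2)+R^{-2}\lesssim R^{-2}$, and the same quantity is $\ge r_0^2/R^2\gtrsim R^{-2}$, so the numerator of $\Go_R$ is $\sim R^{-1}$ throughout $B_R$. Passing to polar coordinates and using this bound,
\[
\Go_R(B_R)=a_1a_2R^2\int_{r_0<|u|\le 1}\frac{\sqrt{1-|u|^2+|u|^2/R^2}}{\sqrt{1-|u|^2}}\,du\sim R\int_{r_0}^1\frac{r\,dr}{\sqrt{1-r^2}}=R\sqrt{1-r_0^2}\sim 1,
\]
which proves (iii), and with it the lemma.

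The computations are elementary; the only genuine subtleties are (a) restricting to $R\ge 1$ so that $|A_R|$ is comparable to $R^2$ rather than to $R^4$, and (b) in part (iii) carrying out the convergent radial integral instead of bounding $\Go_R$ pointwise, so that the $R^2$ from the Jacobian, the $R^{-1}$ from the numerator, and the $\sqrt{1-r_0^2}\sim R^{-1}$ from the radial integration combine to the claimed constant.
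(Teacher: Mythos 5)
Your route is essentially the paper's: rescale by $u_j=x_j/(a_jR)$ so that $D_R$ becomes the unit disk, observe that $1\le\Go_R\le\sqrt2$ on $A_R$ (this in fact follows directly from the defining inequality of $A_R$, no change of variables needed), and obtain (iii) from the convergent radial integral $\int r(1-r^2)^{-1/2}\,dr$ over a thin boundary annulus, with the Jacobian $R^2$, the size $\sim R^{-1}$ of the numerator of $\Go_R$, and the width of the annulus combining to $O(1)$. The one genuine inaccuracy is the claimed identity $\frac{x_1^2}{(a_1R)^4}+\frac{x_2^2}{(a_2R)^4}=|u|^2/R^2$: in the $u$-variables this quantity equals $\frac{1}{R^2}\bigl(\frac{u_1^2}{a_1^2}+\frac{u_2^2}{a_2^2}\bigr)$, which coincides with $|u|^2/R^2$ only when $a_1=a_2=1$. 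Consequently the displayed formula for $\Go_R(x)^2$ is not an identity, $A_R$ is not the round disk $\{|u|\le r_0\}$ but the elliptical region $\sum_j u_j^2\bigl(1+(a_jR)^{-2}\bigr)\le 1$, and the exact evaluations in (ii) ($|A_R|=a_1a_2R^2\pi r_0^2$) and (iii) ($R\sqrt{1-r_0^2}$), as well as the two-sided claim that the numerator is $\sim R^{-1}$ on all of $B_R$, are not literally justified when $a_1\neq a_2$ or $a_j\neq1$.

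The repair is routine and is exactly what the paper does: with $m=\min\{a_1,a_2\}$ and $M=\max\{a_1,a_2\}$ one has $\frac{|u|^2}{M^2R^2}\le\frac{x_1^2}{(a_1R)^4}+\frac{x_2^2}{(a_2R)^4}\le\frac{|u|^2}{m^2R^2}$, so $A_R$ is sandwiched between the disks $|u|^2\le (mR)^2/((mR)^2+1)$ and $|u|^2\le (MR)^2/((MR)^2+1)$; the upper bound in (iii) then uses the outer annulus and the lower bound the inner one, both radial integrals being $\sim R^{-1}$, and all three conclusions survive with constants depending only on $a_1,a_2$ (for $R$ large, as you correctly restrict). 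Your treatment of (ii), directly via $\Go_R\sim1$ on $A_R$ and $|A_R|\sim R^2$, is a small legitimate variant of the paper's argument, which instead gets the lower bound by subtracting $\Go_R(B_R)\sim1$ from $\Go_R(D_R)\gtrsim R^2$.
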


\begin{proof}
 (i) is clear since  $\Go_R \ge 1$ on $D_R$ and $1 \le \Go_R \le \sqrt{2}$ on $A_R$.

Let $m=\min\{a_1, a_2\}$ and $M=\max\{a_1, a_2\}$. If $x \in A_R$, then
\beq\label{BRset}
\frac{x_1^2}{(a_1R)^2} + \frac{x_2^2}{(a_2R)^2} < C_1,
\eeq
where $C_1:= (MR)^2/((MR)^2+1)$, and hence
$
\Go_R(A_R) \lesssim R^2.
$
Likewise, we have
\beq\label{BRset2}
C_2 \le \frac{x_1^2}{(a_1R)^2} + \frac{x_2^2}{(a_2R)^2} <1,  \quad x \in B_R,
\eeq
where $C_2:= (mR)^2/((mR)^2+1)$. After the changes of variables $x_j=a_j R y_j$ ($j=1,2$), we have
$$
\Go_R(B_R) \lesssim R^2 \int_{V} \frac{1}{\sqrt{1-|y|^2}} \sqrt{\frac{y_1^2}{(a_1R)^2} + \frac{y_2^2}{(a_2R)^2}} \, dy \lesssim
R \int_{V} \frac{|y|}{\sqrt{1-|y|^2}} dy,
$$
where $V=\{ \sqrt{C_2} \le |y| <1 \}$. It thus follows that
$$
\Go_R(B_R) \lesssim
R \int_{\sqrt{C_2}}^1 \frac{r}{\sqrt{1-r^2}} dr \sim 1.
$$
On the other hand, it is clear that $\Go_R(B_R) \gtrsim 1$ if $R$ is large enough.  This shows ${\rm (iii)}$.
Since
$$
\Go_R(A_R) + \Go_R(B_R) = \Go_R(D_R) \gtrsim R^2,
$$
we have (ii). This completes the proof.
\end{proof}

For a function $f$ defined on $D_R$, we denote
\beq
\|f\|_{\Go_R}= \| f\|_{L^2(D_R, \Go_R)}.
\eeq
Let $f^\pm$ be functions on $\GG^\pm$ defined by $f^\pm(x, \pm \Gg(x))=f(x)$. Then, we have
\beq
\| f^+ \|_{L^2(\GG^+)} = \| f^- \|_{L^2(\GG^-)} = \|f\|_{\Go_R}.
\eeq

\subsection{The NP operator on oblate ellipsoids and the Poisson integral}

Since $\Gg(y)$ tends to $1$ pointwise as $R \to \infty$, one can expect from \eqnref{K1def} and \eqnref{K2def} that $K_R^1(x,y)$ and $K_R^2(x,y)$ respectively tend to $0$ and $\frac{1}{2}P_2(x-y)$ (if $x \neq y$) as $R \to \infty$, where $P_t(x)$ ($x \in \Rbb^2$) is the Poisson kernel
\beq\label{Pkernel}
P_t(x) = \frac{1}{2\pi} \frac{t}{(|x|^2+t^2)^{\frac{3}{2}}},   \quad (x,t)\in \mathbb R^2\times \mathbb R_+.
\eeq

We now construct test functions $f_\Gr$ in a similar manner as  $g_\Gr$ in the previous section. Recall that $\widehat{P_t}(\xi) = \exp(-2\pi t |\xi|)$.
For $\Gl \in (0, 1/2]$, choose $\xi_0 \in \Rbb^2$ such that
\beq\label{GLdef}
\Gl - \frac{1}{2} e^{-4\pi |\xi_0|} =0.
\eeq
Let $\Gz$ be a function on $\Rbb^2$ such that $\widehat{\Gz}$ is a non-negative compactly supported smooth function satisfying
$$
\int_{\Rbb^2} \widehat{\Gz}(\xi)d\xi=1.
$$
Then, $\Gr^2 \widehat{\Gz}(\Gr (\xi-\xi_0))$ converges weakly to $\Gd_{\xi_0}(\xi)$ as $\Gr \to \infty$.

Let $m=\min\{a_1, a_2\}$ as before and let $\chi$ be a smooth cut-off function such that $\mbox{supp}(\chi) \subset B(0,m)$ and $\chi=1$ on $B(0,m/2)$, where $B(0,r)$ denotes the disk centered at the origin of radius $r$. Define
\beq\label{fRx}
f_\Gr(x):= \Gr^{-1} e^{2\pi i \xi_0 x} (\chi\Gz)(\Gr^{-1}x).
\eeq
Note that $f_\Gr$ is supported in $D_\rho:=B(0,\rho m)$ and
\beq\label{fhatrho}
\widehat{f_\Gr}(\xi)= \Gr \widehat{(\chi\Gz)}(\Gr(\xi-\xi_0)).
\eeq

\begin{lemma}\label{lem:limit}
Let $\Gl \in (0, 1/2]$ and $f_\Gr$ be defined by \eqnref{fRx} with $\xi_0$ satisfying \eqnref{GLdef}. If $\rho=R^{1-\Gs}$ for some $\Gs >0$, then the following hold:
\begin{itemize}
\item[{\rm (i)}] $\| f_\rho \|_{\Go_R} \sim 1$.
\item[{\rm (ii)}] $\| \Gl f_\rho -\frac{1}{2}P_2 *f_\rho \|_{\Go_R} \to 0$ as $R \to 0$.
\end{itemize}
\end{lemma}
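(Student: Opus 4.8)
The plan is to run the argument exactly as in the prolate case (Lemmas \ref{lem:limit-pro} and \ref{lem:limit-pro2}), with the one--dimensional convolution operator $L_0\ast(\cdot)$ replaced by the two--dimensional Poisson operator $\tfrac12 P_2\ast(\cdot)$. The only genuinely new point is that the weight $\Go_R$ in the norm $\|\cdot\|_{\Go_R}$ blows up near $\p D_R$, so Plancherel's theorem is not directly available and one has to cut the problem into a ``near'' and a ``far'' spatial region.

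First I would record an elementary geometric observation (taking $0<\Gs<1$, so that $\Gr=R^{1-\Gs}\to\infty$): for $R$ large the support $D_{\Gr m}=B(0,\Gr m)$ of $f_\Gr$ is contained in the set $A_R$ of Lemma \ref{lem:ARBR}, and in fact $\Go_R\to1$ uniformly on $D_{\Gr m}$. Indeed, if $|x|\le\Gr m$ then $\frac{x_1^2}{(a_1R)^4}+\frac{x_2^2}{(a_2R)^4}\lesssim R^{-2-2\Gs}\to0$ and $\Gg(x)^2\ge 1-|x|^2/(m^2R^2)\ge 1-R^{-2\Gs}\to1$. Part (i) is then immediate: $\|f_\Gr\|_{\Go_R}^2=\int_{D_{\Gr m}}|f_\Gr|^2\Go_R\,dx=(1+o(1))\|f_\Gr\|_{L^2(\Rbb^2)}^2=(1+o(1))\|\chi\Gz\|_{L^2}^2$, and $\|\chi\Gz\|_{L^2}$ is a fixed positive constant.

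For (ii) set $h_\Gr:=\Gl f_\Gr-\tfrac12 P_2\ast f_\Gr$ and split $\|h_\Gr\|_{\Go_R}^2$ into the integrals over $\{|x|<2\Gr m\}\cap D_R$ and over $\{|x|\ge 2\Gr m\}\cap D_R$. The near region lies in $A_R$, where $\Go_R\le\sqrt 2$, so its contribution is at most $\sqrt2\,\|h_\Gr\|_{L^2(\Rbb^2)}^2$, and this is handled precisely as in Lemma \ref{lem:limit-pro}(ii): using $\widehat{P_2}(\xi)=e^{-4\pi|\xi|}$, Plancherel, \eqnref{fhatrho}, and the change of variables $\xi\mapsto\Gr^{-1}\xi+\xi_0$,
\[
\|h_\Gr\|_{L^2(\Rbb^2)}^2=\int_{\Rbb^2}\Big|\Gl-\tfrac12 e^{-4\pi|\Gr^{-1}\xi+\xi_0|}\Big|^2\big|\widehat{(\chi\Gz)}(\xi)\big|^2\,d\xi\longrightarrow 0
\]
as $\Gr\to\infty$, by the dominated convergence theorem and the choice $\Gl=\tfrac12 e^{-4\pi|\xi_0|}$ in \eqnref{GLdef}. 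In the far region $f_\Gr$ vanishes, so there $h_\Gr=-\tfrac12 P_2\ast f_\Gr$; for $|x|\ge 2\Gr m$ and $y$ in $D_{\Gr m}$ one has $|x-y|\ge|x|/2$, so \eqnref{Pkernel} together with $\|f_\Gr\|_{L^1}\sim\Gr$ gives $|(P_2\ast f_\Gr)(x)|\lesssim\Gr|x|^{-3}$. Hence the far contribution is $\lesssim\Gr^2\int_{\{|x|\ge 2\Gr m\}\cap D_R}|x|^{-6}\Go_R(x)\,dx$, which I would bound by splitting $D_R$ once more: on $A_R$, $\Go_R\lesssim1$ by Lemma \ref{lem:ARBR}(i) and $\int_{|x|\ge 2\Gr m}|x|^{-6}dx\lesssim\Gr^{-4}$, while on $B_R$ one has $|x|\sim R$ (from \eqnref{BRset2}), so $\int_{B_R}|x|^{-6}\Go_R\,dx\lesssim R^{-6}\Go_R(B_R)\lesssim R^{-6}$ by Lemma \ref{lem:ARBR}(iii). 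Altogether the far contribution is $\lesssim\Gr^2(\Gr^{-4}+R^{-6})\sim R^{-2+2\Gs}+R^{-4-2\Gs}\to0$ since $0<\Gs<1$. Combining the two regions yields (ii).

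\textbf{Expected main difficulty.} The essential point is the unboundedness of $\Go_R$ near $\p D_R$: unlike in the prolate case one cannot apply Plancherel globally, and one must check that the tail $P_2\ast f_\Gr$, which decays only like $\Gr|x|^{-3}$, nevertheless decays fast enough to dominate the growth of $\Go_R$ (whose mass over $D_R$ is $\sim R^2$ but supported essentially where $|x|\sim R\gg\Gr$). The near/far splitting, together with the estimates of Lemma \ref{lem:ARBR}, is exactly what is needed to make this precise; everything else is a direct transcription of the prolate argument.
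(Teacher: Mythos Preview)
Your proposal is correct and follows essentially the same route as the paper's proof: both recognize that the weight $\Go_R$ is $\sim 1$ on the region where $f_\Gr$ lives (so Plancherel handles the main term), and both control the tail of $P_2\ast f_\Gr$ in the region where $\Go_R$ blows up by combining the pointwise kernel decay with Lemma~\ref{lem:ARBR}. The only differences are cosmetic: the paper splits $D_R$ directly into $A_R$ and $B_R$ (rather than your near/far cut at $|x|=2\Gr m$ followed by a second $A_R/B_R$ split on the far piece), and for the Plancherel estimate the paper splits the frequency integral at $|\xi|=\sqrt\Gr$ to get explicit rates rather than invoking dominated convergence; neither change affects the substance.
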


\begin{proof}
Since $f_\Gr$ is supported in $D_\Gr$ and $D_\Gr \subset A_R$ with $\rho=R^{1-\Gs}$, it follows from (i) of Lemma \ref{lem:ARBR}  that
$$
\| f_\rho \|_{\Go_R} \sim \| f_\rho \|_{2},
$$
where $\| \ \|_2$ denotes the $L^2$-norm on $\Rbb^2$ with respect to the Lebesgue measure.  Using Plancherel's theorem  and  \eqnref{fhatrho}, we have
$
\| f_\Gr \|_{2}^2 = \int_{\Rbb^2} |\widehat{f_\Gr}(\xi)|^2 d\xi = \int_{\Rbb^2} |\widehat{(\chi\Gz)}(\xi)|^2 d\xi.
$
Thus, we get  (i)  combining this with the above.

By \eqnref{BRset2}, we have $|x| \sim R$ for $x \in B_R$. Thus, if $x \in B_R$ and $y \in D_\Gr$ with $\Gr=R^{1-\Gs}$, then
$
|x-y| \gtrsim R-\Gr \gtrsim R.
$
So, we have $
P_2(x-y) \lesssim R^{-3}
$
for $x \in B_R$ and $y \in D_\Gr$.
Since  the support of  $f_\Gr$ is contained  in  $ A_R$,  by   (ii) in Lemma \ref{lem:ARBR}  and  H\"older inequality we have
$$
|(P_2 * f_\Gr)(x)| \lesssim R^{-2} \| f_\Gr \|_2 \lesssim R^{-2}.
$$
Combining this and  (iii) in Lemma \ref{lem:ARBR}  we obtain
\beq\label{BRint1}
\int_{B_R} |(P_2 * f_\Gr)(x)|^2 \Go_R(x) dx \lesssim R^{-4}.
\eeq

We now  proceed to  prove (ii). Note that
$$
\big\| \Gl f_\Gr -\frac{1}{2}P_2 *f_\Gr \big\|_{\Go_R}^2 = \big\| \Gl f_\Gr -\frac{1}{2}P_2 *f_\Gr \big\|_{L^2(A_R, \Go_R)}^2 + \big\| \frac{1}{2}P_2 *f_\Gr \big\|_{{L^2(B_R, \Go_R)}}^2.
$$
Thanks to   (i)  of Lemma \ref{lem:ARBR}  and \eqnref{BRint1}, it suffices to show that
\beq\label{forii}
\big\| \Gl f_\Gr -\frac{1}{2}P_2 *f_\Gr \big\|_{2}^2 \to 0 \quad\mbox{as } R \to \infty.
\eeq
By \eqnref{fhatrho}, we have
$$
\Fcal \Big( \Gl f_\Gr -\frac{1}{2}P_2 *f_\Gr \Big)(\xi)= \Big( \Gl - \frac{1}{2} e^{-4\pi |\xi|} \Big) \Gr \widehat{(\chi\Gz)}(\Gr(\xi-\xi_0)).
$$
Hence, changing variables $\xi\to \xi/\rho$,  we see
\[\big\| \Gl f_\Gr -\frac{1}{2}P_2 *f_\Gr \big\|_{2}^2=\int_{\Rbb^2} \big| \Gl - \frac{1}{2} e^{-4\pi | \frac{\xi}{\Gr} + \xi_0 |} \big|^2 \big| \widehat{(\chi\Gz)}(\xi) \big|^2 d\xi.\]
We  then break the right hand side  as follows:
\begin{align*}
{\rm I + I\!I}:=\Big(\int_{|\xi| \le \sqrt{\Gr}} + \int_{|\xi| > \sqrt{\Gr}}  \Big)  \big| \Gl - \frac{1}{2} e^{-4\pi | \frac{\xi}{\Gr} + \xi_0 |} \big|^2 \big| \widehat{(\chi\Gz)}(\xi) \big|^2 d\xi.
\end{align*}
If $|\xi| \le \sqrt{\Gr}$, we have from \eqnref{GLdef}
$
\big| \Gl - \frac{1}{2} e^{-4\pi | \frac{\xi}{\Gr} + \xi_0 |} \big| \lesssim \frac{|\xi|}{\Gr}.
$
Thus, $
\mathrm I \lesssim \Gr^{-1} $, so $\mathrm I\to 0$ as $R\to \infty$.
Since $\chi\Gz$ is compactly supported and smooth, we have
$
\big| \widehat{(\chi\Gz)}(\xi) \big| \lesssim (1+|\xi|)^{-N}
$
for any $N$. So, we have
$$
{\rm I\!I} \lesssim \int_{|\xi| > \sqrt{\Gr}} ( 1+ |\xi| )^{1-2N} d\xi \lesssim \Gr^{2-N}.
$$
As a results, ${\rm I\!I} \to 0$ as $R\to \infty$. Therefore, we conclude \eqnref{forii}.
\end{proof}

\begin{lemma}\label{lem:limit2}
Let $\Gl \in (0, 1/2]$ and $f_\Gr$ be defined by \eqnref{fRx} with $\xi_0$ satisfying \eqnref{GLdef}. Suppose  $\Kcal_{R}^{1}$ and  $\Kcal_{R}^{2}$  are given by \eqref{upperform} and  $\rho=R^{1-\Gs}$ for some $\Gs >0$, then the following hold:
\begin{itemize}
\item[{\rm (i)}] $\| \Kcal_{R}^{1}[f_\rho] \|_{\Go_R} \to 0$ as $R \to \infty$.
\item[{\rm (ii)}] $\| \Gl f_\rho - \Kcal_{R}^{2}[f_\rho] \|_{\Go_R} \to 0$ as $R \to \infty$.
\end{itemize}

\end{lemma}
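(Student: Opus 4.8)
The plan is to follow the same scheme used in the prolate case (Lemma \ref{lem:limit-pro2}), comparing each of the operators $\Kcal_R^1$ and $\Kcal_R^2$ to its formal limit (namely $0$ and $\tfrac12 P_2\ast\,$) via the $L^1$-boundedness of the difference of kernels, combined with Lemma \ref{lem:limit} (ii). For part (i), I would show directly that $\|\Kcal_R^1\|_{2\to2}$ is small after localizing to where $f_\rho$ lives. Looking at \eqnref{K1def}, the numerator of $K_R^1(x,y)$ is $R^{-2}\Gg(y)^{-1}\sum_j(x_j-y_j)y_j/a_j^2 + (\Gg(x)-\Gg(y))$; on the support region $|x|,|y|\lesssim\rho=R^{1-\Gs}$ one has $\Gg(y)\sim 1$, $|x_j-y_j||y_j|\lesssim R^{2-2\Gs}$, and by the mean value theorem $|\Gg(x)-\Gg(y)|\lesssim R^{-2}|x+y||x-y|\lesssim R^{-2\Gs}|x-y|$ (and also $\lesssim |x-y|$ trivially). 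Hence the numerator is $O(R^{-2\Gs})\cdot(|x-y|+|x-y|^2)$ on the relevant set, so $|K_R^1(x,y)|\lesssim R^{-2\Gs}(|x-y|^{-2}+|x-y|^{-1})$ there; splitting the $y$-integral into $|x-y|\le R^{\Gs/2}$ and $|x-y|\ge R^{\Gs/2}$ as in Lemma \ref{lem:limit-pro2} (using the global bound $|K_R^1|\lesssim|x-y|^{-2}$ from \eqnref{K1def} on the far region, since the numerator is trivially $O(|x-y|)$) gives $\int |K_R^1(x,y)|\,dy\lesssim R^{-\sigma'}$ for some $\sigma'>0$, and by symmetry the same in $x$. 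Young's inequality and Lemma \ref{lem:ARBR}(i) (so $\|\cdot\|_{\Go_R}\sim\|\cdot\|_2$ on the support, together with $\Go_R\ge1$ elsewhere — actually one needs a little care since $\Kcal_R^1[f_\rho]$ need not be supported in $A_R$, but the far-field decay bound $|K_R^1|\lesssim|x-y|^{-2}$ controls the $B_R$ part just as in \eqnref{BRint1}) then yield (i).

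For part (ii), write $\Kcal_R^2[f_\rho] - \tfrac12 P_2\ast f_\rho = \big(\Kcal_R^2 - \tfrac12\Pcal_2\big)[f_\rho]$ where $\Pcal_2$ is convolution by $P_2$, and combine with Lemma \ref{lem:limit} (ii) which already gives $\|\Gl f_\rho - \tfrac12 P_2\ast f_\rho\|_{\Go_R}\to0$; so it suffices to show $\|(\Kcal_R^2-\tfrac12\Pcal_2)[f_\rho]\|_{\Go_R}\to0$. Again decompose the difference of kernels into a near part ($|x-y|\le R^{\Gs/2}$) and a far part. On the far part, both $K_R^2(x,y)$ and $\tfrac12 P_2(x-y)$ are $O(|x-y|^{-2})$ (the numerator of $K_R^2$ in \eqnref{K2def} being bounded by $R^{-2\Gs}|x-y| + 2$ on the support, hence $O(1)$ after dividing, and $|x-y|^2$ in the denominator dominates), so their $L^1$ mass over $|x-y|\ge R^{\Gs/2}$ is $O(R^{-\Gs/2})$; Young plus the $B_R$-estimate handled as in \eqnref{BRint1} disposes of it. On the near part, on the support region \eqnref{x3y3}-analogue $|x|,|y|\lesssim R^{1-\Gs}$, I would Taylor-expand: the numerator of $K_R^2$ is $-(\Gg(x)+\Gg(y)) + O(R^{-2\Gs}|x-y|) = -2 + O(R^{-2\Gs}) + O(R^{-2\Gs}|x-y|)$, and the denominator $[|x-y|^2+(\Gg(x)+\Gg(y))^2]^{3/2} = [|x-y|^2+4]^{3/2} + (\text{error})$ where the error comes from $(\Gg(x)+\Gg(y))^2 - 4 = O(R^{-2\Gs})$ (since $\Gg(x)=1 - \tfrac12(x_1^2/(a_1R)^2 + x_2^2/(a_2R)^2) + \cdots = 1 + O(R^{-2\Gs})$). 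By the mean value theorem applied to $u\mapsto u^{-3/2}$, the difference $|{-}\tfrac1{4\pi}\cdot\tfrac{-2}{[|x-y|^2+4]^{3/2}} - K_R^2(x,y)|$ is bounded by $C R^{-2\Gs}(|x-y|^2+1)^{-1}$ uniformly over $|x-y|\le R^{\Gs/2}$, so $\int_{|x-y|\le R^{\Gs/2}}|\cdots|\,dy \lesssim R^{-2\Gs}\log R$; symmetry in $x$ and Young's inequality give the near-part bound. Since $-\tfrac1{4\pi}\cdot\tfrac{-2}{(|x-y|^2+4)^{3/2}} = \tfrac1{2\pi}\tfrac{2}{(|x-y|^2+4)^{3/2}} = \tfrac12 P_2(x-y)$, the near part compares $\Kcal_R^2$ exactly to $\tfrac12\Pcal_2$, completing the reduction.

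The main obstacle, and where most of the real work sits, is the $B_R$-part of the $\Go_R$-norm: because $\Kcal_R^1[f_\rho]$ and $\Kcal_R^2[f_\rho]$ are functions on all of $D_R$ (not just $A_R$), and $\Go_R$ is large on $B_R$ only through a thin neighborhood of $\p D_R$ of total $\Go_R$-mass $\sim1$ by Lemma \ref{lem:ARBR}(iii), one must separately verify that the contribution $\int_{B_R}|\Kcal_R^i[f_\rho]|^2\Go_R\,dx$ is $o(1)$; this follows because on $B_R$ one has $|x|\sim R$ while $f_\rho$ is supported in $|y|\lesssim R^{1-\Gs}$, so $|x-y|\gtrsim R$ there and $|\Kcal_R^i[f_\rho](x)|\lesssim R^{-2}\|f_\rho\|_2\lesssim R^{-2}$ by the $|x-y|^{-2}$ kernel bound (for $K_R^2$ note $|x-y|\gtrsim R$ also kills the potentially large constant $2$ in the numerator via the denominator), exactly mirroring \eqnref{BRint1}; then $\int_{B_R}|\cdots|^2\Go_R \lesssim R^{-4}\Go_R(B_R)\lesssim R^{-4}\to0$. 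The other delicate point is keeping the various powers of $R$ straight so that every error term carries a genuinely negative power of $R$ (times at worst $\log R$) for $0<\Gs<1$ — but this is exactly the bookkeeping already carried out in the proof of Lemma \ref{lem:limit-pro2}, and it goes through verbatim here with $\Gt$ replaced by the two-dimensional angular variable implicit in $|x-y|$.
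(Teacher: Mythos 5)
There is a genuine gap in part (i), at the near-diagonal estimate for $K_R^1$. You bound the two pieces of the numerator of \eqnref{K1def} separately (triangle inequality), which only gives numerator $\lesssim R^{-2\Gs}|x-y|$ and hence $|K_R^1(x,y)|\lesssim R^{-2\Gs}|x-y|^{-2}$ near the diagonal. But $|x-y|^{-2}$ is not locally integrable in $\Rbb^2$ (nor in $L^p_{loc}$ for any $p\ge 1$), so your claimed Schur/Young bound $\int_{|x-y|\le R^{\Gs/2}}|K_R^1(x,y)|\,dy\lesssim R^{-\Gs'}$ simply does not follow: the integral of your majorant diverges at the diagonal, and no choice of splitting scale repairs this. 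The missing idea --- and the paper's key step --- is an exact first-order cancellation inside the numerator: since $\p_j\Gg(y)=-\frac{y_j}{(a_jR)^2\Gg(y)}$, the term $\frac{1}{R^2\Gg(y)}\sum_j(x_j-y_j)\frac{y_j}{a_j^2}$ equals $-\nabla\Gg(y)\cdot(x-y)$, so the numerator of $K_R^1$ is precisely the second-order Taylor remainder of $\Gg$ at $y$, of size $O\big(\sup|\nabla^2\Gg|\,|x-y|^2\big)=O(R^{-2}|x-y|^2)$ on the region where $\Gg\sim 1$ (which contains the support of $f_\Gr$ and its $R^{\Gs/2}$-neighborhood). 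This upgrades the kernel bound from the non-integrable $|x-y|^{-2}$ to the integrable $R^{-2}|x-y|^{-1}$, after which your scheme (or the paper's: rescale $x\to Rx$, $y\to Ry$, get $|G_R^1(x,y)|\lesssim|x-y|^{-1}$, and apply H\"older with exponent $3/2$) gives the required decay. Note that the prolate proof you invoke as a template uses exactly the analogous cancellation for its $E_1$-term ($|E_1|\lesssim R^{-2}|x_3-y_3|^2$); your version of (i) drops it, and that is fatal as written.

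Part (ii) is essentially sound and structurally close to the paper, with the harmless difference that you compare $K_R^2$ directly with $\frac12 P_2$, whereas the paper splits off a $K_R^1$-type piece and a kernel $K_R^{2,e}$ and reuses (i); your route works because for $K_R^2$ the denominator is bounded below by $(\Gg(x)+\Gg(y))^3\gtrsim 1$ near the diagonal, so no cancellation is needed there, and the extra term $\frac{1}{R^2\Gg(y)}\sum_j(x_j-y_j)y_j/a_j^2$ is harmless. Two small slips to fix: on the far region your claim that an $O(|x-y|^{-2})$ bound has $L^1$ mass $O(R^{-\Gs/2})$ over $\{|x-y|\ge R^{\Gs/2}\}$ is wrong (over a disk of radius $\sim R$ it gives $\log R$); you need the correct bound $O\big(|x-y|^{-3}+R^{-1-\Gs}|x-y|^{-2}\big)$, which does yield smallness. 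Also the middle expression in your final identity should read $\frac{1}{2\pi}\frac{1}{(|x-y|^2+4)^{3/2}}=\frac12 P_2(x-y)$. Your handling of the $B_R$ portion of the $\Go_R$-norm (using $|x-y|\gtrsim R$ and $\Go_R(B_R)\sim 1$) matches the paper's.
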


\begin{proof}
As in the proof of Lemma \ref{lem:limit}, we note that $|x-y| \gtrsim R$ if $x \in B_R$ and $y \in D_\Gr$ with $\Gr=R^{1-\Gs}$. Since $\Gg(y) \gtrsim 1$, using  \eqref{K1def}  and \eqref{K2def} we have
$$
|K_R^j (x,y)| \lesssim |x-y|^{-2} \frac{\Gr}{R} \lesssim R^{-2-\Gs}.
$$
Since $\|f_\rho\|_1\lesssim \rho$,  $|\Kcal_R^j [f_\Gr]|\lesssim   R^{-1}$.
By  (iii)  of Lemma \ref{lem:ARBR}  we obtain
\beq\label{BRint2}
\int_{B_R} |\Kcal_R^j [f_\Gr](x)|^2 \Go_R(x) dx \lesssim R^{-2}, \quad j=1,2.
\eeq
Thus, in order to prove (i), it suffices to prove
\beq\label{foriii}
\|\Kcal_{R}^{1}[f_\rho]\|_{L^2(A_R)}^2 \to 0 \quad\mbox{as } R \to \infty.
\eeq

Since $\Gr=R^{1-\Gs}$, changing variables $x\to Rx$ and $y\to Ry$ yields
\begin{align}
\label{KR1}
\|\Kcal_{R}^{1}[f_\rho]\|_{L^2(A_R)}^2
&= R^{-2+2\Gs} \int_{A_1}\Big| \int G^1_R(x,y)  e^{2\pi i R\xi_0\cdot y} (\chi\Gz)( R^\Gs y)  dy\Big|^2 dx,
 \end{align}
where
\beq
\label{G1R}
G_R^1(x,y):= R^3 K_R^1(Rx,Ry)=
-\frac{1}{4\pi} \frac{\Gg_1(x) - \Gg_1(y)+ \frac{1}{\Gg_1(y)} \sum_{j=1}^2 (x_j-y_j) \frac{y_j}{a_j^2} } { \big[ |x-y|^2 + R^{-2}(\Gg_1(x) - \Gg_1(y))^2 \big]^{3/2} }.
\eeq

Note that $(\chi\Gz)( R^\Gs y)$ is supported in $B(0, m R^{-\Gs})$ and  $\p_j\Gg_1(y)=-\frac{1}{\Gg_1(y)} \frac{y_j}{a_j^2} $.  Thus,
if $y \in B(0, m R^{-\Gs})$ and $x \in B(0, m/2)$,  by Taylor's theorem we have
$$
\Gg_1(x) - \Gg_1(y)+ \frac{1}{\Gg_1(y)} \sum_{j=1}^2 (x_j-y_j) \frac{y_j}{a_j^2} = \frac{1}{2} \sum_{i,j=1}^2 (\p_i\p_j \Gg_1)(x_*) (x_i-y_i)(x_j-y_j)
$$
for some $x_* \in B(0, m/2)$.

Since $|(\p_i\p_j \Gg_1)(x)| \lesssim 1$ for $x \in B(0, m/2)$, we have
$$
\Big|\Gg_1(x) - \Gg_1(y)+ \frac{1}{\Gg_1(y)} \sum_{j=1}^2 (x_j-y_j) \frac{y_j}{a_j^2}  \Big| \lesssim |x-y|^2,
$$
and hence
\beq\label{3000}
|G^1_R(x,y)|\lesssim |x-y|^{-1}.
\eeq
If $y \in B(0, m R^{-\Gs})$ and $x \in A_1 \setminus B(0, m/2)$, then $|x-y| \gtrsim 1$. Thus, we
see from \eqref{G1R} that  \eqnref{3000} remains to be valid for  $y \in B(0, m R^{-\Gs})$ and $x \in A_1$.
Therefore,  using \eqref{3000} we have
$$
\sup_{x \in A_1} \int_{B(0, m R^{-\Gs})} |G^1_R(x,y)|^p dy \lesssim 1, \quad 1\le p<2.
$$
Taking $p=3/2$, by H\"older's inequality  we have
$$
\sup_{x \in A_1} \Big| \int G^1_R(x,y)  e^{2\pi i R\xi_0 x} (\chi\Gz)( R^\Gs y)  dy\Big| \lesssim R^{-2\Gs/3}.
$$
Combining this and \eqref{KR1} we thus obtain
\beq\label{2000}
\|\Kcal_{R}^{1}[f_\rho]\|_{L^2(A_R)} \lesssim R^{-1+2\Gs/3},
\eeq
 which yields \eqnref{foriii} if we take a  $\sigma>0$ small enough.

We now show (ii). We prove
$$
\| \Kcal_{R}^{2}[f_\rho] - \frac{1}{2} P_2 * f_\rho\|_{\Go_R} \to 0 \quad\mbox{as } R \to \infty.
$$
Then (ii) follows by Lemma \ref{lem:limit} (ii). As before,  thanks to \eqnref{BRint1} and \eqnref{BRint2}  it suffices  to show
\beq\label{foriv}
\Big\|\Kcal_{R}^{2}[f_\rho] - \frac{1}{2} P_2 * [f_\rho]\Big\|_{L^2(A_R)} \to 0 \quad\mbox{as } R \to \infty.
\eeq

Let us set
\[ K_R^{2,e}(x,y)= \frac{1}{2\pi} \frac{\Gg(x)}{\big[ |x-y|^2 + (\Gg(x) +\Gg(y))^2 \big]^{3/2} }.\]
We note from \eqref{K1def} and \eqref{K2def} that
$K_{R}^{2}= K_{R}^{1}+   K_R^{2,e}$.  Thus, we may decompose
$$
\Kcal_{R}^{2}[f_\rho]= \Kcal_{R}^{1}[f_\rho]+ \Kcal_{R}^{2,e}[f_\rho],
$$
where $\Kcal_{R}^{2,e}$ denotes  the operator defined by the integral kernel  $K_R^{2,e}$.  Thus,
by \eqref{2000} proving \eqnref{foriv} is reduced to proving
\beq\label{foriv2}
\Big\|\Kcal_{R}^{2,e}[f_\rho] - \frac{1}{2} P_2 * [f_\rho]\Big\|_{L^2(A_R)} \to 0 \quad\mbox{as } R \to \infty.
\eeq

For simplicity we set
\begin{align*}
J(x)&= \Kcal_{R}^{2,e}[f_\rho](x) - \frac{1}{2} P_2 * [f_\rho](x),
\\
K(x,y)&= K_R^{2,e}(x,y) - \frac{1}{2} P_2(x-y).
\end{align*}
So, we have
$$
J(x) = \int K(x,y) f_\Gr(y) dy.
$$

 Let $\Gs'$ be a number to be determined later, but satisfying $0< \Gs' <\Gs$. If $x \in A_R \setminus B(0, R^{1-\Gs'})$ and $y$ is in the support of $f_\Gr$, namely, $y \in B(0, m R^{1-\Gs})$, then
$
|x-y| \gtrsim R^{1-\Gs'}.
$
Hence, we have
$$
|K(x,y)| \lesssim |K_R^{2,e}(x,y)| + |P_2(x-y)| \lesssim R^{-3+3\Gs'}.
$$
By  (i) of  Lemma \ref{lem:limit}   we see that
$$
|J(x)| \lesssim R^{-3+3\Gs'} |B(0, m R^{1-\Gs})|^{1/2} \| f_\Gr \|_2 \lesssim R^{-2+3\Gs'-\Gs}.
$$
Therefore, we have
$$
\int_{A_R \setminus B(0, R^{1-\Gs'})} |J(x)|^2 dx \le R^{6\Gs'-2\Gs-2}.
$$
We choose $\Gs'$ so that $3\Gs' <\Gs$. Then we see
\beq\label{2300}
\int_{A_R \setminus B(0, R^{1-\Gs'})} |J(x)|^2 dx \to 0 \quad\mbox{as } R \to \infty.
\eeq

To handle the remaining part we only need to consider  $y \in B(0, m R^{1-\Gs})$ and $x \in B(0, R^{1-\Gs'})$. We write
\begin{align*}
2\pi K(x,y) &= \frac{\Gg(x)-1}{\big[ |x-y|^2 + (\Gg(x) +\Gg(y))^2 \big]^{3/2} } \\
& \qquad + \Big( \frac{1}{\big[ |x-y|^2 + (\Gg(x) +\Gg(y))^2 \big]^{3/2}} - \frac{1}{\big[ |x-y|^2 + 2^2 \big]^{3/2}} \Big).
\end{align*}
Since $|\Gg(x)-1| \lesssim R^{-2}|x|^2 \lesssim R^{-2\Gs'}$ for $x \in B(0, R^{1-\Gs'})$, one can easily see that  the absolute values of  the first and the second terms in the right hand side  are respectively bounded by $R^{-2\Gs'} k_3(x-y)$ and $R^{-2\Gs'} k_5(x-y)$ for $y \in B(0, m R^{1-\Gs})$ and $x \in B(0, R^{1-\Gs'})$, where we denote
$$
k_n(x)= \frac{1}{(|x|^2+1)^{n/2}}.
$$
Thus, we have  $
|J(x)| \lesssim R^{-2\Gs'} (k_3 * |f_\Gr| + k_5 * |f_\Gr|)
$
for $x \in B(0, R^{1-\Gs'})$. Therefore,
$$
\int_{B(0, R^{1-\Gs'})} |J(x)|^2 dx \lesssim R^{-4\Gs'} (\| k_3 * |f_\Gr| \|_2^2 + \| k_5 * |f_\Gr| \|_2^2).
$$
Since $k_n$ ($n>2$) is integrable,  applying Young's convolution inequality  we obtain
$$
\int_{B(0, R^{1-\Gs'})} |J(x)|^2 dx \lesssim R^{-4\Gs'} \| f_\Gr \|_2^2 \lesssim R^{-4\Gs'}.
$$
This together with \eqnref{2300} yields \eqnref{foriv}.  So, the proof is completed.
\end{proof}

\subsection{Proof of Theorem \ref{thm:oblate}}

Let $\Gl \in [-1/2,1/2]\setminus \{0\}$. Let $f_\Gr$ denote the function given  by  \eqnref{fRx}  with $\Gr=R^{1-\Gs}$ for some $\Gs \in (0,1)$
where $\xi_0$ is given by \eqref{GLdef} with  $\lambda$ replaced by  $|\Gl|$.  Though $f_\rho$ is sightly different from the previous one, we keep using the same notation.

We now define $\Gvf_\Gr$ on $\p\GO_R$.
If  $\Gl \in (0, 1/2]$,
\beq\label{Gvfdef3D}
\Gvf_\Gr(X)= \Gvf_\Gr(x, x_3) :=
f_\Gr(x), \quad X \in \GG^+ \cup \GG^-.
\eeq
If $\Gl \in [-1/2,0)$, we define
\beq\label{fodef}
\Gvf_\Gr(X) :=
\begin{cases}
f_\Gr(x) \quad &\mbox{if } X \in \GG^+ , \\
-f_\Gr(x) \quad &\mbox{if } X \in \GG^-.
\end{cases}
\eeq

For $\Gl \in (0, 1/2]$,  it follows from \eqnref{upperform} and \eqnref{lowerform} that
$$
\Kcal_{\p\GO_R}[\Gvf_\Gr](X)= \Kcal_{R}^{1}[f_\Gr](x) + \Kcal_{R}^{2}[f_\Gr](x)
, \quad X \in \GG^+ \cup \GG^-.
$$
As a result, we have
$$
\Gl \Gvf_\Gr(X) - \Kcal_{\p\GO_R}[\Gvf_\Gr](X)= -\Kcal_{R}^{1}[f_\Gr](x) + (\Gl f_\Gr - \Kcal_{R}^{2}[f_\Gr])(x), \quad X \in \GG^+ \cup \GG^-.
$$
When  $\Gl \in [-1/2,0)$, we similarly  have
$$
\Kcal_{\p\GO_R}[\Gvf_\Gr](X)=
\begin{cases}
\Kcal_{R}^{1}[f_\Gr](x) - \Kcal_{R}^{2}[f_\Gr](x) \quad &\mbox{if } X \in \GG^+ , \\
-\Kcal_{R}^{1}[f_\Gr](x) + \Kcal_{R}^{2}[f_\Gr](x) \quad &\mbox{if } X \in \GG^-,
\end{cases}
$$
and, consequently,
$$
\Gl \Gvf_\Gr(X) - \Kcal_{\p\GO_R}[\Gvf_\Gr](X)=
\begin{cases}
-\Kcal_{R}^{1}[f_\Gr](x) + (\Gl f_\Gr(x) + \Kcal_{R}^{2}[f_\Gr](x)) \quad &\mbox{if } X \in \GG^+ , \\
\Kcal_{R}^{1}[f_\Gr](x) - (\Gl f_\Gr(x) + \Kcal_{R}^{2}[f_\Gr](x)) \quad &\mbox{if } X \in \GG^-.
\end{cases}
$$
In  either case, we thererfore  have
$$
\big \| \Gl \Gvf_\Gr - \Kcal_{\p\GO_R}[\Gvf_\Gr] \big \|_{L^2(\p\GO_R)} \le \big \| \Kcal_{R}^{1}[f_\Gr] \big \|_{\Go_R} + \big \| |\Gl| f_\Gr - \Kcal_{R}^{2}[f_\Gr] \big \|_{\Go_R}.
$$
Since $\| \Gvf_\Gr \|_{L^2(\p\GO_{R})} \sim \| f_\Gr \|_{\Go_R}$, we obtain the next proposition as an immediate consequence of Lemma \ref{lem:limit2}.

\begin{prop}\label{prop:oblate}
If $\Gl \in [-1/2,0) \cup (0, 1/2]$, then
\beq\label{limit}
\lim_{R \to \infty} \frac{\| ( \Gl I - \Kcal_{\p\GO_R}) [\Gvf_\Gr] \|_{L^2(\p\GO_{R})}}{\| \Gvf_\Gr \|_{L^2(\p\GO_{R})}} =0.
\eeq
\end{prop}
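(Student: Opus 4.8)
The plan is to combine the three parts of Lemma~\ref{lem:limit2} with the two ways of decomposing $\Kcal_{\p\GO_R}[\Gvf_\Gr]$ through the parametrization formulas \eqnref{upperform} and \eqnref{lowerform}. Recall that $\p\GO_R=\GG^+\cup\GG^-$ and that for a function $\Gvf$ on $\p\GO_R$ the values on the two sheets are recorded by $\Gvf^{\pm}$; the test function $\Gvf_\Gr$ is built so that $\Gvf_\Gr^+ = \pm\Gvf_\Gr^- = f_\Gr$ according to the sign of $\Gl$. First I would treat the case $\Gl\in(0,1/2]$: here $\Gvf_\Gr^\pm = f_\Gr$, so both \eqnref{upperform} and \eqnref{lowerform} give $\Kcal_{\p\GO_R}[\Gvf_\Gr] = \Kcal_R^1[f_\Gr]+\Kcal_R^2[f_\Gr]$ on each sheet, and therefore $\Gl\Gvf_\Gr - \Kcal_{\p\GO_R}[\Gvf_\Gr]$ equals $-\Kcal_R^1[f_\Gr] + (\Gl f_\Gr - \Kcal_R^2[f_\Gr])$ as a function of $x$, identical on $\GG^+$ and $\GG^-$.

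For $\Gl\in[-1/2,0)$ I would use $\Gvf_\Gr^+ = f_\Gr$, $\Gvf_\Gr^- = -f_\Gr$; then by linearity \eqnref{upperform} gives $\Kcal_R^1[f_\Gr] - \Kcal_R^2[f_\Gr]$ on $\GG^+$ and \eqnref{lowerform} gives $\Kcal_R^2[f_\Gr] - \Kcal_R^1[f_\Gr]$ on $\GG^-$. Hence $\Gl\Gvf_\Gr - \Kcal_{\p\GO_R}[\Gvf_\Gr]$ equals $-\Kcal_R^1[f_\Gr] + (\Gl f_\Gr + \Kcal_R^2[f_\Gr])$ on $\GG^+$ and its negative on $\GG^-$. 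In both cases, since $\Gl=\pm|\Gl|$ and $\xi_0$ in the definition of $f_\Gr$ is chosen via \eqnref{GLdef} with $|\Gl|$, the quantity $\Gl f_\Gr \mp \Kcal_R^2[f_\Gr]$ can be rewritten as $\pm(|\Gl| f_\Gr - \Kcal_R^2[f_\Gr])$. Using $\|f^{\pm}\|_{L^2(\GG^\pm)} = \|f\|_{\Go_R}$ and the triangle inequality on the (at most two-piece) boundary, I obtain
\[
\big\| \Gl\Gvf_\Gr - \Kcal_{\p\GO_R}[\Gvf_\Gr] \big\|_{L^2(\p\GO_R)} \lesssim \big\|\Kcal_R^1[f_\Gr]\big\|_{\Go_R} + \big\| |\Gl| f_\Gr - \Kcal_R^2[f_\Gr]\big\|_{\Go_R}.
\]

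By Lemma~\ref{lem:limit2}(i) the first term tends to $0$, and by Lemma~\ref{lem:limit2}(ii) (applied with $|\Gl|$ in place of $\Gl$, which is legitimate since $|\Gl|\in(0,1/2]$ and $\xi_0$ satisfies the matching relation \eqnref{GLdef}) the second term also tends to $0$. Finally, the denominator is controlled from below by $\|\Gvf_\Gr\|_{L^2(\p\GO_R)} \sim \|f_\Gr\|_{\Go_R} \sim 1$, the last equivalence being Lemma~\ref{lem:limit}(i). Dividing, the ratio in \eqnref{limit} goes to $0$, which is the assertion. I do not expect any serious obstacle here; the content is entirely in Lemmas~\ref{lem:limit}--\ref{lem:limit2}, and the only point requiring a little care is the bookkeeping of signs when $\Gl<0$ — making sure the odd extension $\Gvf_\Gr$ interacts with the kernels $K_R^1$, $K_R^2$ so that the $+$ sign in front of $\Kcal_R^2[f_\Gr]$ appears, and that the normalization $\|\Gvf_\Gr\|_{L^2(\p\GO_R)}\sim\|f_\Gr\|_{\Go_R}$ still holds for the sign-changing test function (it does, since $|\Gvf_\Gr|$ is unchanged).
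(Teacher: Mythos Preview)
Your proposal is correct and follows essentially the same approach as the paper: split into the cases $\Gl>0$ and $\Gl<0$, compute $\Kcal_{\p\GO_R}[\Gvf_\Gr]$ on each sheet via \eqnref{upperform}--\eqnref{lowerform}, reduce to the bound $\|\Kcal_R^1[f_\Gr]\|_{\Go_R}+\||\Gl|f_\Gr-\Kcal_R^2[f_\Gr]\|_{\Go_R}$, and conclude by Lemma~\ref{lem:limit2} together with Lemma~\ref{lem:limit}(i) for the denominator. One tiny slip: Lemma~\ref{lem:limit2} has two parts, not three, but this does not affect the argument.
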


Once we have this proposition,
the rest of the proof  of Theorem \ref{thm:oblate} is the same as that of Proposition \ref{prop:prolate} in subsection \ref{subsec:proofpro}. So, we omit the detail.

\section{Proof of Theorem \ref{thm:thinflat}}\label{sec:4}

For $\GS^\pm$ and $\GS^s$ given in \eqnref{Fboundary}, we define
$$
\GS_R^\pm= \{(Rx, x_3): X=(x, x_3) \in \GS^\pm \}
$$
and $\GS_R^s$ likewise. Then, we have
$$
\p \GF_R = \GS_R^+ \cup \GS_R^- \cup \GS_R^s.
$$
Let $U_R= \{Rx: x\in U \}$, which is the projection of $\GS_R^\pm$ onto the $x$-plane.

If a function $\Gvf$ defined on $\p \GF_R$ is supported in $\GS_R^+ \cup \GS_R^-$, then we write
\begin{align*}
\Kcal_{\p\GF_R} [\Gvf](x, x_3) & = -\frac{1}{4\pi} \int_{\Rbb^2} \frac{x_3-1}{[|x-y|^2 + (x_3-1)^2]^{3/2}} \Gvf^+(y) dy \\
&\quad + \frac{1}{4\pi} \int_{\Rbb^2} \frac{x_3+1}{[|x-y|^2 + (x_3+1)^2]^{3/2}} \Gvf^-(y) dy,
\end{align*}
where $\Gvf^\pm(y)=\Gvf(y, \pm 1)$. Thus, if $x \in U_R$, then
\beq\label{4100}
\Kcal_{\p\GF_R} [\Gvf](x, \pm 1) = \frac{1}{2} (P_2 * \Gvf^\pm)(x).
\eeq

Let $f_\Gr$ be the function defined by \eqnref{fRx} (the number $m$ used to define $f_\Gr$ is chosen so that $B(0,m) \subset U$ in this case). By slightly modifying the proof of Lemma \ref{lem:limit}, one can prove the following lemma. Note that  we here use $H^{1/2}$ norm since $\p\GF_R$ is allowed to be Lipschitz continuous.

\begin{lemma}\label{lem:limitGF}
Let $\Gl \in (0, 1/2]$ and $f_\Gr$ be defined by \eqnref{fRx} with $\xi_0$ satisfying \eqnref{GLdef}. The following hold:
\begin{itemize}
\item[{\rm (i)}] $\| f_\rho \|_{H^{1/2}(\Rbb^2)} \sim 1$.
\item[{\rm (ii)}] $\| \Gl f_\rho -\frac{1}{2}P_2 *f_\rho \|_{H^{1/2}(\Rbb^2)} \to 0$ as $\Gr \to 0$.
\end{itemize}
\end{lemma}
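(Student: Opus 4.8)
The plan is to reduce Lemma \ref{lem:limitGF} to the $L^2$-estimates that were essentially already established in the proof of Lemma \ref{lem:limit}. The only genuinely new feature here is that we must work with the $H^{1/2}(\Rbb^2)$ norm rather than the weighted $L^2$ norm, because $\p\GF_R$ is merely Lipschitz; so the strategy is to transfer everything to the Fourier side, where the $H^{1/2}$ norm is the $L^2$ norm against the weight $(1+|\xi|^2)^{1/2}$, and observe that on the Fourier support of $f_\rho$ this weight is comparable to a constant.

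First I would record that, by \eqnref{fhatrho}, $\widehat{f_\rho}(\xi) = \rho\,\widehat{(\chi\Gz)}(\rho(\xi-\xi_0))$ is supported in the ball $B(\xi_0, \rho^{-1}\,r)$, where $r$ is chosen so that $\mathrm{supp}\,\widehat{(\chi\Gz)}\subset B(0,r)$. For $\rho$ large this ball is contained in, say, $B(\xi_0,1)$, so on the support of $\widehat{f_\rho}$ we have $(1+|\xi|^2)^{1/2}\sim (1+|\xi_0|^2)^{1/2}$, a constant independent of $\rho$ (and of $R$). Consequently, by Plancherel,
\[
\| f_\rho \|_{H^{1/2}(\Rbb^2)}^2 = \int_{\Rbb^2} (1+|\xi|^2)^{1/2} |\widehat{f_\rho}(\xi)|^2 \, d\xi \sim \int_{\Rbb^2} |\widehat{f_\rho}(\xi)|^2 \, d\xi = \| f_\rho \|_2^2 = \|\chi\Gz\|_2^2,
\]
which gives (i). For (ii), the same observation applies: $\widehat{\Gl f_\rho - \frac12 P_2 * f_\rho}(\xi) = (\Gl - \frac12 e^{-4\pi|\xi|})\widehat{f_\rho}(\xi)$ is supported in the same small ball, so
\[
\big\| \Gl f_\rho - \tfrac12 P_2 * f_\rho \big\|_{H^{1/2}(\Rbb^2)}^2 \sim \big\| \Gl f_\rho - \tfrac12 P_2 * f_\rho \big\|_2^2,
\]
and the right-hand side tends to $0$ as $\rho\to\infty$ by exactly the computation carried out in the proof of Lemma \ref{lem:limit} (split the integral over $|\xi|\le\sqrt\rho$ and $|\xi|>\sqrt\rho$, use $|\Gl - \frac12 e^{-4\pi|\xi/\rho + \xi_0|}|\lesssim |\xi|/\rho$ on the first piece by \eqnref{GLdef}, and the rapid decay of $\widehat{(\chi\Gz)}$ on the second).

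I do not anticipate a serious obstacle: the content is entirely the observation that the Fourier transform of $f_\rho$ concentrates near the fixed frequency $\xi_0$, so that the inhomogeneous Sobolev weight is harmless. The one point to state carefully is that $B(0,m)\subset U$ is needed precisely so that $f_\rho$ is supported in $U_\rho$ — but since in Lemma \ref{lem:limitGF} we are only comparing norms on $\Rbb^2$, this plays no role in the lemma itself; it will matter only when the lemma is combined with \eqnref{4100} in the proof of Theorem \ref{thm:thinflat}. The closest thing to a subtlety is making sure the comparability $(1+|\xi|^2)^{1/2}\sim(1+|\xi_0|^2)^{1/2}$ is uniform: this holds for all $\rho \ge \rho_0$ with $\rho_0$ depending only on $r$ and $|\xi_0|$, hence independent of $R$, which is all we need since $\rho = R^{1-\Gs}\to\infty$.
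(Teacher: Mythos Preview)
Your overall strategy---pass to the Fourier side and observe that the Sobolev weight $(1+|\xi|^2)^{1/2}$ is essentially constant where $\widehat{f_\rho}$ lives---is exactly the right idea, and it is in line with what the paper has in mind when it says the lemma follows ``by slightly modifying the proof of Lemma \ref{lem:limit}.'' However, there is a genuine error in the execution.

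You claim that $\widehat{\chi\Gz}$ is compactly supported, so that $\widehat{f_\rho}$ is supported in a ball $B(\xi_0,\rho^{-1}r)$. This is false. The function $\chi\Gz$ is compactly supported (because $\chi$ is) and not identically zero; by the Paley--Wiener theorem, its Fourier transform $\widehat{\chi\Gz}=\widehat{\chi}*\widehat{\Gz}$ cannot be compactly supported. (Indeed, $\widehat{\Gz}$ is compactly supported but $\widehat{\chi}$ is not, and their convolution inherits the non-compact support.) Your own argument is internally inconsistent on this point: in the last paragraph you invoke the splitting into $|\xi|\le\sqrt\rho$ and $|\xi|>\sqrt\rho$ together with the rapid decay of $\widehat{\chi\Gz}$---but if $\widehat{\chi\Gz}$ were supported in $B(0,r)$, the tail piece would vanish identically for large $\rho$ and no splitting would be needed.

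The fix is easy and keeps your strategy intact. After the change of variables $\xi\mapsto \eta/\rho+\xi_0$ one has
\[
\|f_\rho\|_{H^{1/2}}^2=\int_{\Rbb^2}\big(1+|\eta/\rho+\xi_0|^2\big)^{1/2}\,|\widehat{\chi\Gz}(\eta)|^2\,d\eta,
\]
and since $\big(1+|\eta/\rho+\xi_0|^2\big)^{1/2}\lesssim (1+|\eta|)$ uniformly in $\rho\ge 1$ while $\widehat{\chi\Gz}$ is Schwartz, dominated convergence gives the limit $(1+|\xi_0|^2)^{1/2}\|\chi\Gz\|_2^2$ and hence (i). The same device (or, equivalently, the $|\eta|\le\sqrt\rho$ versus $|\eta|>\sqrt\rho$ split you already cite) handles (ii): the integrand is dominated by $C(1+|\eta|)|\widehat{\chi\Gz}(\eta)|^2$ and tends to zero pointwise by \eqnref{GLdef}. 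So replace ``compact support of $\widehat{\chi\Gz}$'' everywhere by ``rapid decay of $\widehat{\chi\Gz}$'' and your argument goes through.
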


Let $\Gl \in [-1/2,1/2]$ ($\Gl \neq 0$) and let $f_\Gr$ be the function defined in \eqnref{fRx} corresponding to $|\Gl|$. Let $\Gr=R^{1-\Gs}$ for some $\Gs \in (0,1)$.  In the same manner as  \eqnref{Gvfdef3D} and \eqnref{fodef}, we define $\Gvf_\Gr$ on $\p\GF_R$:
\begin{align}
\label{GvfGF}
\Gvf_\Gr(X)&= \Gvf_\Gr(x, x_3) :=
\begin{cases}
f_\Gr(x) \qquad &\mbox{if } X \in \GS_R^+ \cup \GS_R^-, \\
0 \qquad &\mbox{if } X \in \GS_R^s,
\end{cases}
\qquad \Gl \in (0, 1/2],
\\[4pt]
\label{foGF}
\Gvf_\Gr(X)&= \Gvf_\Gr(x, x_3) :=
\begin{cases}
f_\Gr(x) \quad &\mbox{if } X \in \GS_R^+ , \\
-f_\Gr(x) \quad &\mbox{if } X \in \GS_R^-, \\
0 \quad &\mbox{if } X \in \GS_R^s,
\end{cases}
\qquad\qquad \ \Gl \in [-1/2,0).
\end{align}

The following proposition which is analogous to Proposition \ref{prop:oblate} yields Theorem \ref{thm:thinflat} in the same way as Proposition \ref{prop:oblate} yields Theorem \ref{thm:oblate}.

\begin{prop}\label{prop:thinflat}
Let $\Gr=R^{1-\Gs}$ for some $\Gs \in (0,1)$. If $\Gl \in [-1/2,0) \cup (0, 1/2]$, then
\beq\label{limitGF}
\lim_{R \to \infty} \frac{\| ( \Gl I - \Kcal_{\p\GF_R}) [\Gvf_\Gr] \|_{H^{1/2}(\p\GF_{R})}}{\| \Gvf_\Gr \|_{H^{1/2}(\p\GF_{R})}} =0.
\eeq
\end{prop}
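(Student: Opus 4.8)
The plan is to reduce Proposition \ref{prop:thinflat} to Lemma \ref{lem:limitGF} by the same bookkeeping that takes Lemma \ref{lem:limit2} to Proposition \ref{prop:oblate}, paying attention to the two features that distinguish the flat-domain setting from the oblate-ellipsoid one: the presence of the side surface $\GS_R^s$, and the fact that we must work in the $H^{1/2}$ norm rather than $L^2$. First I would recall from \eqnref{4100} that when $\Gvf_\Gr$ is supported on $\GS_R^+\cup\GS_R^-$ the action of $\Kcal_{\p\GF_R}$ on the top and bottom faces is exactly $\tfrac12 P_2\ast\Gvf_\Gr^\pm$, with \emph{no} residual operator analogous to $\Kcal_R^1$ — this is the simplification afforded by the genuinely flat (rather than curved oblate) faces. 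Consequently, on $\GS_R^\pm$ one computes directly that for $\Gl\in(0,1/2]$,
\[
\Gl\Gvf_\Gr - \Kcal_{\p\GF_R}[\Gvf_\Gr] = \Gl f_\Gr - \tfrac12 P_2\ast f_\Gr \quad\text{on } U_R\times\{\pm1\},
\]
and for $\Gl\in[-1/2,0)$ the sign-reversal in \eqnref{foGF} turns the cross term $\tfrac12 P_2\ast f_\Gr$ (coming from the opposite face) into $-\tfrac12 P_2\ast f_\Gr$, so that the relevant quantity becomes $|\Gl| f_\Gr - \tfrac12 P_2\ast f_\Gr$. In both cases the restriction of $(\Gl I-\Kcal_{\p\GF_R})[\Gvf_\Gr]$ to the two faces is controlled in $H^{1/2}(\Rbb^2)$ by Lemma \ref{lem:limitGF}(ii), and the denominator $\|\Gvf_\Gr\|_{H^{1/2}(\p\GF_R)}$ is bounded below by (a constant times) $\|f_\Gr\|_{H^{1/2}(\Rbb^2)}\sim 1$ by Lemma \ref{lem:limitGF}(i), after noting that the $H^{1/2}$ norm on $\p\GF_R$ restricted to the flat pieces is comparable to the flat $H^{1/2}(\Rbb^2)$ norm uniformly in $R$ (a dilation/trace argument on the fixed Lipschitz reference geometry $\GF$).

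The step that needs genuine care — and which I expect to be the main obstacle — is estimating the value of $\Kcal_{\p\GF_R}[\Gvf_\Gr]$ \emph{on the side surface} $\GS_R^s$, together with the contribution of $\GS_R^s$ to the $H^{1/2}$ norm. Because $f_\Gr$ is supported in $D_\Gr=B(0,\Gr m)$ with $\Gr=R^{1-\Gs}$, and $\GS_R^s$ sits over $\p U_R=R\,\p U$, which is at distance $\gtrsim R$ from the support of $\Gvf_\Gr$, the integral kernel $\tfrac{1}{4\pi}\la Y-X,\nu(Y)\ra/|X-Y|^3$ is pointwise $\lesssim R^{-3}$ for $X\in\GS_R^s$ and $Y\in\GS_R^\pm$. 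Combined with $\|f_\Gr\|_1\lesssim \Gr^2$ and the fact that $|\GS_R^s|\sim R^2$, this gives $\|\Kcal_{\p\GF_R}[\Gvf_\Gr]\|_{L^2(\GS_R^s)}\lesssim R^{-3}\cdot\Gr^2\cdot R = R^{2-2\Gs-3+1}=R^{-2\Gs}\to0$; one then has to upgrade this to an $H^{1/2}$ estimate, which can be done by interpolating against a crude $H^1$ (or Lipschitz) bound for $\Kcal_{\p\GF_R}[\Gvf_\Gr]$ on $\GS_R^s$ obtained by differentiating the kernel (each derivative costs another factor of $R^{-1}$, so the $H^1$ norm is also negligible), or alternatively by working with an explicit Lipschitz partition of unity subordinate to the decomposition $\p\GF_R=(\GS_R^+\cup\GS_R^-)\cup\GS_R^s$ and using that the $H^{1/2}$ norm is, up to uniform constants, the infimum over extensions — here the natural competitor extension vanishes near $\GS_R^s$.

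Assembling these pieces: write
\[
\|(\Gl I-\Kcal_{\p\GF_R})[\Gvf_\Gr]\|_{H^{1/2}(\p\GF_R)} \lesssim \big\||\Gl| f_\Gr - \tfrac12 P_2\ast f_\Gr\big\|_{H^{1/2}(\Rbb^2)} + \|\Kcal_{\p\GF_R}[\Gvf_\Gr]\|_{H^{1/2}(\GS_R^s)} + (\text{cut-off errors}),
\]
where the first term tends to $0$ by Lemma \ref{lem:limitGF}(ii) (applied with $\Gl$ replaced by $|\Gl|$, exactly as in the proof of Theorem \ref{thm:oblate}), the second by the side-surface estimate just described, and the cut-off/partition-of-unity commutator errors are $O(R^{-\Gd})$ for some $\Gd>0$. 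Dividing by $\|\Gvf_\Gr\|_{H^{1/2}(\p\GF_R)}\gtrsim 1$ yields \eqnref{limitGF}. Finally, as noted in the excerpt, once Proposition \ref{prop:thinflat} is in hand, Theorem \ref{thm:thinflat} follows by the Weyl-sequence argument of subsection \ref{subsec:proofpro}: if some $\Gl\in[-1/2,1/2]$ were at positive distance from $\ol{\cup_j \Gs(\Kcal_{\p\GF_{R_j}})}$, one would get a uniform lower bound $\|(\Gl I-\Kcal_{\p\GF_{R_j}})[\Gvf]\|_{H^{1/2}}\ge C\|\Gvf\|_{H^{1/2}}$ contradicting \eqnref{limitGF}; this proves $[-1/2,1/2]\subset\ol{\cup_j\Gs(\Kcal_{\p\GF_{R_j}})}$, and the reverse inclusion is the general containment $\Gs(\Kcal_{\p\GO})\subset[-1/2,1/2]$.
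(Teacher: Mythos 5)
Your proposal is correct and follows essentially the same route as the paper: reduce the face contribution to Lemma \ref{lem:limitGF} via \eqnref{4100}, and control the remote (side-surface and near-rim) contribution in $H^{1/2}$ by crude kernel and kernel-derivative decay ($|X-Y|\gtrsim R$) together with a cutoff/partition-of-unity and the Gagliardo double-integral characterization, exactly as the paper does with its functions $\Gc_1,\Gc_2$. Your bookkeeping slips ($\|f_\Gr\|_1\sim\Gr$, not $\Gr^2$, and $|\GS_R^s|\sim R$, not $R^2$) are both in the harmless direction and do not affect the conclusion.
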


\begin{proof}
By (i) in  Lemma \ref{lem:limitGF},
$
\| \Gvf_\Gr \|_{H^{1/2}(\p\GF_R)} =  \| f_\Gr \|_{H^{1/2}(\Rbb^2)} \sim 1.
$
So, it suffices to show
\beq\label{GFbounded}
\lim_{R \to \infty} \| (\Gl I- \Kcal_{\p\GF_R})[\Gvf_{\Gr}] \|_{H^{1/2}(\p\GF_R)} = 0.
\eeq

For $c>0$   we set
$$
U_R^c := \{ (x,x_3)  :  x \in U_R, \ \mbox{dist}(x, \p U_R) \ge c \}.
$$
Let $C$ be a constant such that $\GG^s  \subset \Rbb^3 \setminus U_R^C$ where
$\GG^s$ denotes  the projection of $\GS_R^s$ onto the $x$-plane.  Note that we can choose such a constant independently of $R>1$.
Let $\Gc_1(X) = \Gc_1(x,x_3) =\Gc_1(x)$ be a smooth function supported in $U_R^C $ such that $\Gc_1=1$ on $U_R^{2C}$, and let $\Gc_2:=1-\Gc_1$. Then we have
$$
\| (\Gl I- \Kcal_{\p\GF_R})[\Gvf_\Gr] \|_{H^{1/2}(\p\GF_R)} \le \sum_{j=1}^2 \| \Gc_j(\Gl I- \Kcal_{\p\GF_R})[\Gvf_\Gr] \|_{H^{1/2}(\p\GF_R)} .
$$

Since
$
\Gc_1(\Gl I- \Kcal_{\p\GF_R})[\Gvf_\Gr] (x,x_3) = \Gc_1( |\Gl| f_\Gr -\frac{1}{2} (P_2 * f_\Gr) ),
$
it follows from (ii) of Lemma \ref{lem:limitGF}  that
\beq
\| \Gc_1(\Gl I- \Kcal_{\p\GF_R})[\Gvf_\Gr] \|_{H^{1/2}(\p\GF_R)} \to 0 \ \text{ as } \  R \to \infty.
\eeq

To estimate $\| \Gc_2(\Gl I- \Kcal_{\p\GF_R})[\Gvf_\Gr] \|_{H^{1/2}(\p\GF_R)}$, let
$$
\GG:= \p \GF_R \cap (\Rbb^3 \setminus U_R^{2C}).
$$
Then we have
$$
\| \Gc_2(\Gl I- \Kcal_{\p\GF_R})[\Gvf_\Gr] \|_{H^{1/2}(\p\GF_R)} = \| \Gc_2 \Kcal_{\p\GF_R}[\Gvf_\Gr] \|_{H^{1/2}(\GG)}.
$$

Note that the shape of $\GG$ is independent of $R$.
We use the following characterization of the space $H^{1/2}(\GG)$ (see, e.g., \cite{GT}):
\beq\label{def_H1/2}
\| h \|_{H^{1/2}(\GG)}^2= \| h \|_{L^2(\GG)}^2 + \int_{\GG} \int_{\GG} \frac{|h(X)-h(Z)|^2}{|X-Z|^3} dS(X) dS(Z).
\eeq

Let $K(X,Y)$ be the integral kernel of $\Kcal_{\p\GF_R}$, namely,
$$
K_R(X,Y) = \frac{1}{4\pi} \frac{\la Y-X, \nu(Y) \ra}{|X-Y|^3}.
$$
If $X, Z \in \mbox{supp}(\Gc_2)$ and $Y \in \mbox{supp}(\Gvf_{\Gr})$, then
$$
|X-Z| \lesssim 1, \quad |X-Y| \gtrsim R, \quad |Z-Y| \gtrsim R.
$$
Thus, $|K(X,Y)| \lesssim R^{-2}$. It then follows from \eqnref{fRx} that
\beq\label{100}
\| \Gc_2 \Kcal_{\p\GO_R}[\Gvf_\Gr] \|_{L^2(\GG)} \lesssim R^{-2} \int_{\Rbb^2} |f_\Gr(y)|dy \lesssim R^{-1-\Gs} \int_{\Rbb^2} |(\chi\Gz)(y)|dy \lesssim R^{-1-\Gs}.
\eeq
We also have
\begin{align*}
&|\Gc_2(X) K(X,Y) - \Gc_2(Z) K(Z,Y)| \\
& \le |\Gc_2(X)- \Gc_2(Z)| |K(X,Y)| + |\Gc_2(Z)| |K(X,Y)-K(Z,Y)| \\
& \lesssim R^{-2} |X- Z| + R^{-3} |X- Z| \le R^{-2} |X- Z|.
\end{align*}
Thus,
\begin{align*}
|\Gc_2(X) \Kcal_{\p\GF_R}[\Gvf_\Gr](X) - \Gc_2(Z) \Kcal_{\p\GF_R}[\Gvf_\Gr](Z)|
&\lesssim R^{-2}|X- Z| \int_{\Rbb^2} |f_\Gr(y)|dy \\
&\le R^{-1-\Gs} |X- Z| \int_{\Rbb^2} |(\chi\Gz)(y)|dy \\
& \lesssim R^{-1-\Gs} |X- Z|.
\end{align*}
It then follows that
$$
\int_{\GG} \int_{\GG} \frac{|\Gc_2(X) \Kcal_{\p\GF_R}[\Gvf_{\Gr}](X) - \Gc_2(Z) \Kcal_{\p\GF_R}[\Gvf_{\Gr}](Z)|^2}{|X- Z|^3} dS(X) dS(Z) \lesssim R^{-1-\Gs},
$$
which together with \eqnref{100} implies \eqnref{GFbounded}.
This completes the proof.
\end{proof}

\section*{Acknowledgments}
We thank Graeme Milton for useful discussion on negative NP eigenvalues.



\begin{thebibliography}{11}

\bibitem{AS} M. Abramowitz and I. A. Stegun, {\sl Handbook of Mathematical Functions With Formulas, Graphs, and Mathematical Tables},  In M. Abramowitz and I. A. Stegun, Eds., Dover Books on Advanced Mathematics, Dover Publications, New York, 1965.

\bibitem{Ahner} J. F. Ahner, On the eigenvalues of the electrostatic integral operator. II. J. Math. Anal. Appl.  181  (1994), 328--334.

\bibitem{AA} J. F. Ahner and R. F. Arenstorf, On the eigenvalues of the electrostatic integral operator. J. Math. Anal. Appl.  117  (1986),  187--197.

\bibitem{ADR} J. F. Ahner, V. V. Dyakin and V. Y. Raevskii,
New spectral results for the electrostatic integral operator, J. Math. Anal. Appl.
185 (1994), 391--402.

\bibitem{AJKKM} K. Ando, Y.-G. Ji, H. Kang, D. Kawagoe and Y. Miyanishi, Spectral structure of the Neumann--Poincar\'e operator on tori, Ann. I. H. Poincare-AN 36 (2019), 1817--1828.

\bibitem{AKM21} K. Ando, H. Kang and Y. Miyanishi, Spectral structure of the Neumann--Poincar\'e operator on thin domains in two dimensions, J. Anal. Math, to appear. arXiv:2006.14377.

\bibitem{AKMN} K. Ando, H. Kang, Y. Miyanishi and T. Nakazawa, Surface localization of plasmons in three dimensions and convexity, SIAM J. Appl. Math, 81-3 (2021), 1020--1033.

\bibitem{AKMP} K. Ando, H. Kang, Y. Miyanishi and M. Putinar, Spectral analysis of Neumann--Poincar\'e operator, Vol. LXVI Rev. Roumaine Math. Pures Appl. (2021), to appear. arXiv:2003.14387

\bibitem{AKMU} K. Ando, H. Kang, Y. Miyanishi, and E. Ushikoshi,
The first Hadamard variation of Neumann-Poincar\'{e} eigenvalues on the sphere,
Proc. Amer. Math. Soc. 147 (2019), 1073--1080.

\bibitem{BT} E. Bonnetier and F. Triki, Pointwise bounds on the gradient and the spectrum of the
Neumann--Poincar\'e operator: The case of 2 discs, Contemporary Math. 577 (2012), 79--90.

\bibitem{BT2} E. Bonnetier and F. Triki, On the spectrum of Poincar\'e variational problem for two close-to-touching inclusions in 2D, Arch. Ration. Mech. Anal. 209 (2013), 541--567.

\bibitem{BZ} E. Bonnetier and H. Zhang, Characterization of the essential spectrum of the Neumann-Poincaré operator in 2D domains with corner via Weyl sequences. Rev. Mat. Iberoam. 35 (2019), no. 3, 925–948.

\bibitem{FK} T. Feng and H. Kang, Spectrum of the Neumann-Poincar\'e operator for ellipsoids and tunability, Integr. Equat. Oper. Th. 84 (2016), 591--599.

\bibitem{GT} D. Gilbarg and N. Trudinger, {\sl Elliptic Partial Differential Equations of Second Order}, Springer-Verlag Berlin Heidelberg, 2001.

\bibitem{Grieser} D. Grieser, The plasmonic eigenvalue problem, Rev. Math. Phys, 26 (2014), 1450005.

\bibitem{HKL} J. Helsing, H. Kang and M. Lim, Classification of spectra of the Neumann--Poincar\'{e} operator on planar domains with corners by resonance, Ann. I. H. Poincare-AN 34 (2017), 991--1011.

\bibitem{JK} Y.-G. Ji and H. Kang, A concavity condition for existence of a negative value  in Neumann--Poincar\'e spectrum in three dimensions, Proc. Amer. Math. Soc 147 (2019), 3431-3438.

\bibitem{KLY} H. Kang, M. Lim and S. Yu, Spectral resolution of the Neumann-Poincar\'{e} operator on intersecting disks and analysis of plasmon resonance, Arch. Rati. Mech. Anal. 226 (2017), 83--115.

\bibitem{Kellog-book} {O. D. Kellogg},
    {\sl Foundations of potential theory}, Dover, New York, 1953.

\bibitem{KPS} D.~{Khavinson}, M.~{Putinar}, and H.S.~{Shapiro}, \newblock Poincar\'e's variational problem in potential theory. \newblock Arch. Rational Mech. Anal. 185(1) (2007), 143--184.

\bibitem{Mart} E. Martensen,
\newblock A spectral property of the electrostatic integral operator,
\newblock J. Math. Anal. Appl. 238 (1999), 551--557.

\bibitem{MR-SPMJ-20} Y. Miyanishi and G. Rozenblum, Eigenvalues of the Neumann-Poincar\'e operator in dimension 3: Weyl's law and geometry, Algebra i Analiz 31 (2019), no. 2, 248–268; reprinted in St. Petersburg Math. J. 31 (2020), no. 2, 371–386.

\bibitem{NIST} F. Olver, D. Lozier, R. Boisvert and C. Clark (Eds), {\sl NIST Handbook of Mathematical Functions}, NIST and Cambridge University Press, 2010.

\bibitem{Perfekt} K.-M. Perfekt,
Plasmonic eigenvalue problem for corners: limiting absorption principle and absolute continuity in the essential spectrum, J. Math. Pures Appl. 145 (2021), 130--162.

\bibitem{PP1} K.-M. Perfekt and M. Putinar, Spectral bounds for the Neumann-Poincar\'e operator on planar domains with corners,  J. d’Analyse Math. 124 (2014), 39--57.

\bibitem{PP2} K.-M. Perfekt and M. Putinar, The essential spectrum of the Neumann-Poincar\'e operator on a domain with corners, Arch. Rati. Mech. Anal. 223 (2017), 1019--1033.

\bibitem{Poi1}{ H. Poincar\'e},  {La m\'ethode de Neumann et le probl\`eme de Dirichlet}, Acta Math. 20 (1897), 59--152.

\bibitem{Ritt} S. Ritter, The spectrum of the electrostatic integral operator for an ellipsoid, in {\sl Inverse scattering and
potential problems in mathematical physics}, (R.F. Kleinman, R. Kress, and E. Marstensen, Eds.), Lang, Frankfurt/Bern, (1995), 157--167.

\end{thebibliography}
\end{document}